\renewcommand{\phi}{\varphi}
\renewcommand{\epsilon}{\varepsilon}
\renewcommand{\theta}{\vartheta}
\def\ZZ{{\mathbf Z}}
\def\PP{{\mathbf P}}
\def\cE{\mathcal{E}}
\def\cL{\mathcal{L}}
\def\cO{\mathcal{O}}
\def\cT{\mathcal{T}}
\def\cP{\mathcal{P}}
\def\cR{\mathcal{R}}
\def\alb{{\alpha\beta}}
\DeclareMathOperator{\codim}{codim} \DeclareMathOperator{\Pic}{Pic}
\DeclareMathOperator{\Hom}{Hom}
\DeclareMathOperator{\rank}{rank} \DeclareMathOperator{\Spec}{Spec}
\DeclareMathOperator{\Sch}{Sch}
\DeclareMathOperator{\Set}{Set}
\DeclareMathOperator{\Ann}{Ann}
\DeclareMathOperator{\id}{id}
 \DeclareMathOperator{\lct}{lct}
\DeclareMathOperator{\pic}{Pic}\DeclareMathOperator{\mult}{mult}
\newtheorem{lemma}{Lemma}[section]
\newtheorem{theorem}[lemma]{Theorem}
\newtheorem*{thma}{Theorem A}
\newtheorem*{thmb}{Theorem B}
\newtheorem{corollary}[lemma]{Corollary}
\theoremstyle{definition}
\newtheorem{definition}[lemma]{Definition}
\newtheorem{remark}[lemma]{Remark}
\theoremstyle{remark}
\newtheorem*{remark*}{Remark}
\newtheorem*{note*}{Note}
\begin{document}

\title{Jet schemes and singularities of $W^r_d(C)$ loci}

\thanks{2000\,\emph{Mathematics Subject Classification}. Primary 14E18; Secondary 14B05.}
\keywords{Brill-Noether Loci, log canonical threshold, }

\author[Z. Zhu]{Zhixian Zhu}
\address{Department of Mathematics, University of Michigan,
Ann Arbor, MI 48109, USA} \email{{\tt zhixian@umich.edu}}

\begin{abstract}
Kempf proved that  the theta divisor of a smooth projective curve $C$ has rational singularities. In this paper we estimate the dimensions of the jet schemes of the theta divisor and show that all these schemes are irreducible. In particular, we recover Kempf's theorem in this way.   For general projective smooth curves, our method also gives a formula for the log canonical threshold of the pair $(\pic^d(C), W^r_d(C))$.
\end{abstract}

\maketitle

\markboth{Z. Zhu}{Jet schemes and singularities of $W^r_d(C)$ loci}

\section*{Introduction}

Let $k$ be an algebraically closed field of characteristic zero. Let $C$ be a smooth projective curve of genus $g$ over $k$. Riemann's original problem was to determine the order of vanishing of the theta function at a point in the Jacobian of $C$. His Singularity Theorem  says that for every line bundle $L$ of degree $g-1$ in the  theta divisor $\Theta$,  the multiplicity of $\Theta$ at $L$ is $h^0(C,L)$.

Recall that $W^r_d(C)$ is the subscheme of $\pic^d(C)$ parameterizing line bundles $L$ of degree $d$ with $\dim|L|\geq r$. The theta divisor $\Theta$ is $W^0_{g-1}(C)$.  Kempf \cite{Kem} described the tangent cone of $W^0_d$ at every point. In particular, he generalized  Riemann's multiplicity result to the $W^0_d$  locus. In his paper, he described the singularities of $W^0_d$ and its tangent cone as follows.
Let $L$ be a point of $W^0_d(C)$,  with $d<g$ and $l=\dim H^0(L)$. The tangent cone $\cT_L(W^0_d(C))$ has rational singularities and therefore $W^0_d(C)$ has rational singularities. Moreover, the degree of $\PP \cT_L(W^0_d(C))$ as a subscheme of $\PP H^0(C,K)^*$ is the binomial coefficient $${h^1(L) \choose {l-1}}={ {g-d
+l-1}\choose {l-1}}.$$

Following the work of Riemann and Kempf, there has been much interest in the singularities of general theta divisors.  For instance, using vanishing theorems,  Ein and
Lazarsfeld \cite{EL} showed that if $\Theta\subset A$ is an irreducible theta divisor on an abelian variety, then $\Theta$ is normal and has rational singularities.

In this paper we approach the study of the singularities of the Brill-Noether locus $W^r_d(C)$ from the point of view of its jet schemes. The jet scheme $X_m$ of a given scheme $X$ of finite type over $k$ parameterizes $m$-jets on X, that is, morphisms $\Spec k[t]/(t^{m+1}) \rightarrow X$. Note that $X_0=X$ and $X_1$ is the total tangent space of $X$.  For every $m\geq 0$, we have a morphism $\pi_m: X_m\rightarrow X$ that maps an $m$-jet to the image of the closed point. The fiber of $\pi_m$ at $x\in X$ is denoted by $X_{m,x}$.

Let $L$ be a point in $\Theta$. By the definition of $\pic^d(C)$,  an element $\cL_m\in \pic^d(C)_m$ is identified with a line bundle on $C\times \Spec k[t]/t^{(m+1)}$.

Using the description of the theta divisor as a determinantal variety, we partition the scheme $\Theta_{m,L}$ into constructible subsets $C_{\lambda}$ indexed by partitions $\lambda$ of length $h^0(C,L)$ with sum $\geq m+1$. Several invariants of $\cL_m \in \Theta_{m,L}$ are determined by the corresponding partition $\lambda$. For instance, $\lambda$ determines the dimension of the kernel of the truncation map $H^0(C\times\Spec k[t]/t^{(m+1)}, \cL_m)\rightarrow H^0(C,L)$. In this way, $\lambda$  determines for each $j\leq m$ the dimension of the subspace of sections in $H^0(L)$ that can be extended to sections of $\cL_j$, where $\cL_j$ is the image of $\cL_m$ under the truncation map $\pic^d(C)_m\rightarrow \pic^d(C)_j$.

Let us briefly describe the proof of Riemann's Singularity Theorem that we give using jet schemes. Since the inequality $\mult_{L}{\Theta}\geq l:=h^0{(L)}$ follows from the determinantal description of $\Theta$, we focus on the opposite inequality.  In order to show that $\mult_{L}{\Theta}\leq l$, it is enough to prove that $\Theta_{l,L}\neq \pic^{g-1}(C)_{l,L}$. If this is not the case, then the image of $\Theta_{l,L}$ in $\Theta_{1,L}=\pic^{g-1}(C)_{1,L}$ is $\Theta_{1,L}$. Using the partition associated to any $\cL_l\in \Theta_{l,L}$, we show that if $\cL_1$ is the image of $\cL_l$ in $\Theta_{1,L}$, then the restriction map $H^0(\cL_1)\rightarrow H^0(L)$ is nonzero.  On the other hand, we can identify  $\cL_1$ in $\pic^{g-1}(C)_{1,L}$ to a {\u C}ech cohomolgy class in $H^1(C,\cO_C)$. Furthermore, the obstruction to lifting a section $s\in H^0(C,L)$ to a section of  $\cL_1$ can be described using the pairing
$$H^0(C,L)\otimes H^1(C,\cO_C)\xrightarrow{\nu} H^1(C,L)$$
that is, s lifts if and only if $\nu(s\otimes \cL_1)=0$. Since the set of elements in $H^1(C,\cO_C)$ for which there is a nonzero such $s$ is of codimension one, this gives a contradiction, proving that $\mult_{L}\Theta\leq l$.

By estimating the dimension of the constructible subset $C_{\lambda}$ of $\Theta_{m,L}$ for each partition $\lambda$, we obtain the following result.

\begin{thma}\label{theta divisor}
For every smooth projective curve $C$ of genus $g\geq 3$ over $k$, and every integer $m\geq 1$, we have
$\dim(\pi^{\Theta}_m)^{-1}(\Theta_{\text{sing}})=(g-1)(m+1)-1$ if $C$ is a hyperelliptic curve. For nonhyperelliptic curves, we have $\dim(\pi^{\Theta}_m)^{-1}(\Theta_{\text{sing}})=(g-1)(m+1)-2$.
\end{thma}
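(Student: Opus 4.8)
The plan is to reduce the computation of $\dim(\pi^\Theta_m)^{-1}(\Theta_{\mathrm{sing}})$ to an analysis, stratified by partitions $\lambda$, of the constructible pieces $C_\lambda$ covering $\Theta_{m,L}$ as $L$ ranges over $\Theta_{\mathrm{sing}}$. First I would recall from the determinantal description of $\Theta$ that $\Theta_{\mathrm{sing}} = W^1_{g-1}(C)$, i.e. the locus of $L$ with $l := h^0(C,L) \geq 2$, and that by the constructibility of dimension along the fibers of $\pi^\Theta_m$ together with upper-semicontinuity of $l$, it suffices to maximize $\dim W^1_{g-1}(C) + \dim \Theta_{m,L}$ over the strata of $\Theta_{\mathrm{sing}}$ where $l$ is constant. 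For a fixed such stratum, I would use the partition bookkeeping described in the introduction: an $m$-jet $\cL_m \in \Theta_{m,L}$ has an associated partition $\lambda$ of length $l$ with $|\lambda| \geq m+1$, and $\lambda$ governs both the dimension of the kernel of the truncation $H^0(C\times\Spec k[t]/t^{m+1},\cL_m)\to H^0(C,L)$ and, via the cup-product pairing $\nu\colon H^0(C,L)\otimes H^1(C,\cO_C)\to H^1(C,L)$ iterated across the jet levels, the codimension of $C_\lambda$ inside the ambient jet fiber $\pic^{g-1}(C)_{m,L}$, which has dimension $g m$.

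The core estimate is therefore: for each admissible $\lambda$, bound $\dim C_\lambda$ from above by $gm - (\text{codimension contributed by } \lambda)$, where the codimension is read off as a sum of ranks of the pairings $\nu$ restricted to the successively shrinking subspaces of $H^0(C,L)$ that lift to each jet level $\cL_j$. Here the base-point-free pencil trick and the geometry of the map $C \to \PP H^0(C,L)^*$ enter: for nonhyperelliptic $C$ the multiplication map $H^0(L)\otimes H^0(K\otimes L^{-1})\to H^0(K)$ is "as nondegenerate as possible" (this is where the hyperelliptic/nonhyperelliptic dichotomy originates — on a hyperelliptic curve the $g^1_2$ forces the multiplication maps to drop rank by exactly one), so the pairing $\nu$ has the expected corank, giving one extra unit of codimension in the nonhyperelliptic case. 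Combining the maximal stratum dimension $\dim W^1_{g-1}(C) = g - 4$ (hyperelliptic: $g-3$, attained by $\mathcal{O}(g^1_2 + \text{points})$; nonhyperelliptic: $g-4$ by the existence results / Martens' theorem) with the optimal $\lambda$ and the pairing estimate should telescope to exactly $(g-1)(m+1)-1$ in the hyperelliptic case and $(g-1)(m+1)-2$ otherwise; the reverse inequality comes from exhibiting an explicit family of $m$-jets realizing the bound, e.g. jets supported over the maximal-dimension stratum with $\lambda = (m+1,1,\dots,1)$.

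The main obstacle I anticipate is the uniformity of the codimension estimate for $C_\lambda$ across \emph{all} partitions $\lambda$ simultaneously — one must show that no exotic partition (e.g. one with several large parts) can compensate a smaller base locus $W^1$-stratum by having anomalously large $C_\lambda$. This requires a genuinely global argument controlling the ranks of the iterated pairings $\nu_j$ in terms of $\lambda$, and this is precisely the content of the dimension estimate for $C_\lambda$ that (per the excerpt) underlies Theorem A. A secondary technical point is handling the boundary cases in low genus: for $g = 3$ the nonhyperelliptic curve is a plane quartic with $W^1_2 = \varnothing$, so $\Theta_{\mathrm{sing}}$ is a finite set of bitangent-type points, and one must check the formula $(g-1)(m+1)-2 = 2(m+1)-2 = 2m$ still holds there (the fiber $\Theta_{m,L}$ over an ordinary double point of a surface has dimension $2m$, matching). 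Once the per-stratum, per-partition bound is in hand, assembling the final answer is routine maximization of a linear function over the finitely many relevant strata.
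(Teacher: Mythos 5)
Your overall framework---reducing to the strata $W^{l-1}_{g-1}\smallsetminus W^{l}_{g-1}$, decomposing $\Theta_{m,L}$ into the pieces $C_{\lambda,m}$, and bounding each piece via the obstruction pairing $\nu$---matches the paper. But there are two genuine gaps. First, you locate the hyperelliptic/nonhyperelliptic dichotomy in the wrong place: you claim it comes from a rank drop in the multiplication map $H^0(L)\otimes H^0(K\otimes L^{-1})\to H^0(K)$ on hyperelliptic curves. In the actual argument, the only multiplication map that enters the estimate for $C_{\lambda,m}$ is $m_{s_0}\colon H^0(K\otimes L^{-1})\to H^0(K)$ for a \emph{single} nonzero section $s_0$ (the relevant weak flag has signature $\kappa_i\in\{0,1\}$), and this map is injective for \emph{every} curve; so the per-partition bound $\dim C_{\lambda,m}\leq mg-(\sum_i\lambda_i-r_{\lambda_l})+l-1$ is identical in both cases. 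The dichotomy comes entirely from the base: $\dim\Theta_{\mathrm{sing}}=g-3$ versus $g-4$, i.e.\ Martens' theorem with its extra $-1$ for nonhyperelliptic curves. If you pursue your plan as written, you will be hunting for a corank difference in $\nu$ that does not exist.

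Second, the ``routine maximization'' at the end does not close. For $m$ with $m+1=lk$ divisible by some $l\in[2,g-1]$, the balanced partition $\lambda_1=\cdots=\lambda_l=(m+1)/l$ makes the combined estimate degenerate to $(m+1)(g-1)$ (hyperelliptic) resp.\ $(m+1)(g-1)-1$ (nonhyperelliptic)---each one unit too large. The paper gets around this by proving the strict inequality only for the infinitely many $m$ with $m+1$ not divisible by any $l\in[2,g-1]$, deducing that $\Theta_m$ is irreducible for \emph{all} $m$ (Musta\c{t}\u{a}'s criterion), which forces $\dim(\pi^\Theta_m)^{-1}(\Theta_{\mathrm{sing}})\leq(m+1)(g-1)-1$ for every $m$; the nonhyperelliptic refinement then needs a separate contradiction argument propagating backwards with the inequality $\dim(\pi_{m+1})^{-1}(Z)\geq\dim(\pi_m)^{-1}(Z)+(g-1)$ for l.c.i.\ varieties. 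That same inequality, seeded with the $m=1$ computation $\dim(\pi_1^\Theta)^{-1}(\Theta_{\mathrm{sing}})=\dim\Theta_{\mathrm{sing}}+g$, is also how the lower bound is obtained---no explicit family of jets (and no verification that your candidate $\lambda$ actually achieves its upper bound) is needed. You would need to add both of these global arguments to make your outline into a proof.
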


A theorem in \cite{Mus1} describes complete intersection rational singularities in terms of jet schemes. Applying that theorem, we recover Kempf's result that the theta divisor has rational singularities.  Similarly, as a corollary of  Theorem 3.3 in \cite{EMY}, we deduce that the theta divisor of a nonhyperelliptic curve has terminal singularities.

Using similar ideas, we are able to compute the dimensions of the jet schemes of the Brill-Noether locus $W^r_d(C)$ for generic curves. Using Musta\c{t}\v{a}'s formula from \cite{Mmus2} describing the log canonical threshold in terms of dimensions of jet schemes, we obtain the following formula for the log canonical threshold of the pair $(\pic^d(C),W^r_d(C))$.

\begin{thmb} \label{lct of BN}
For a general projective smooth curve $C$ of genus $g$, let $L$ be a line bundle of degree $d$
with $d\leq g-1$ and $l=h^0(L)$. The log canonical threshold of $(\pic^d(C),W^r_d(C))$ at $L \in W^r_d(C)$  is
$$\lct_L(\pic^d(C), W^r_d(C))=\min\limits_{1\leq i\leq {l-r}}\left\{\frac{(l+1-i)(g-d+l-i)}{l+1-r-i}\right\}.$$
\end{thmb}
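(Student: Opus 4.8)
My plan is to compute $\lct_L(\pic^d(C), W^r_d(C))$ by combining Musta\c{t}\v{a}'s formula
$$\lct_L(\pic^d(C), W^r_d(C)) = \inf_{m \geq 0} \frac{\dim \pic^d(C) \cdot (m+1) - \dim (W^r_d(C))_{m,L}}{m+1}$$
with an explicit computation of the dimensions of the fibers $(W^r_d(C))_{m,L}$ of the jet schemes over the point $L$. Since $\dim \pic^d(C) = g$, everything reduces to computing, or at least estimating sharply, $\dim (W^r_d(C))_{m,L}$ for all $m$, and then minimizing the resulting linear-fractional expression in $m$.

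First I would set up the determinantal description of $W^r_d(C)$ near $L$: locally it is cut out by the $(l-r)\times(l-r)$ minors of a matrix of functions (the Kempf–Fulton–Lazarsfeld presentation), where $l = h^0(L)$ and the matrix has size governed by $h^0(L)$ and $h^1(L) = g-d+l-1$. An $m$-jet $\cL_m$ lying in $(W^r_d(C))_{m,L}$ then corresponds to a line bundle on $C \times \Spec k[t]/(t^{m+1})$ whose matrix of sections, reduced mod $t^{m+1}$, drops rank to $\leq r$. As in the introduction, I partition $(W^r_d(C))_{m,L}$ into constructible pieces $C_\lambda$ indexed by partitions $\lambda$, where $\lambda$ records, for each $j \leq m$, the dimension of the subspace of $H^0(L)$ that lifts to $H^0(\cL_j)$; the rank condition forces this "lifting filtration" to grow slowly enough that at least $l-r$ sections are never extended, which translates into a numerical constraint on $\lambda$. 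The generality hypothesis on $C$ enters here: for a general curve the Petri map is injective, so the obstruction pairings $H^0(\cL_j) \otimes H^1(\cO) \to H^1(\cL_j)$ behave as generically as possible, and the dimension of each stratum $C_\lambda$ is exactly what the naive parameter count predicts — this is where I expect to borrow the jet-scheme dimension estimates developed for Theorem A and adapt them to the $W^r_d$ setting. Summing the contributions (the choice of the jet of $\cL$ in $\pic^d(C)_m$, which is a torsor under $H^1(\cO) \otimes (t)/(t^{m+1})$, minus the codimension imposed by the rank drop at each level) gives $\dim C_\lambda$ as an explicit function of $\lambda$ and $m$; maximizing over $\lambda$ yields $\dim (W^r_d(C))_{m,L}$.

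The heart of the computation is then the combinatorial optimization: one finds that the extremal partition concentrates the rank drop into a single "block," and the codimension $g(m+1) - \dim(W^r_d(C))_{m,L}$ comes out as the minimum over $1 \leq i \leq l-r$ of an expression like $\lceil (m+1)(l+1-r-i)/(l+1-i) \rceil \cdot$ (something) $+ (g-d+l-i)\cdot(\ldots)$, whose value divided by $m+1$ tends, as $m \to \infty$ along an appropriate arithmetic progression, to $(l+1-i)(g-d+l-i)/(l+1-r-i)$. Taking the infimum over $m$ and then the minimum over $i$ produces the stated formula. I would close by checking the two extreme cases as sanity tests: when $r=0$ and $d=g-1$ the formula must give $\lct = 1$ (the theta divisor is a reduced divisor with rational, even terminal in the nonhyperelliptic case, singularities), and when $W^r_d(C)$ has the expected Brill–Noether codimension the minimum should be attained at $i=1$, recovering $\lct = (g-d+l-1)l/(l-r)$, consistent with the multiplicity and degree computations of Kempf.

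The main obstacle I anticipate is the sharpness of the stratum dimension estimates: showing $\dim C_\lambda$ is exactly (not merely at most) the parameter count requires constructing enough jets realizing each allowed lifting pattern, and this genuinely uses the general-curve hypothesis (injectivity of Petri, or the resulting transversality of the relevant Schubert-type conditions on the universal cohomology bundles). Getting the ceiling functions to align so that the infimum over $m$ is attained in the limit — rather than at some small $m$ where rounding could in principle give a smaller quotient — is a delicate point that I would need to verify by a monotonicity argument in $m$ for each fixed $i$.
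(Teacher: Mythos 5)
Your plan for the inequality $\lct_L\geq\min_i\frac{(l+1-i)(g-d+l-i)}{l+1-r-i}$ is essentially the paper's argument: Musta\c{t}\v{a}'s jet-theoretic formula, the stratification of $W^r_d(C)_{m,L}$ into pieces $C_{\lambda,m}$ indexed by partitions $\lambda$ with $\sum_{i=1}^{l-r}\lambda_i\geq m+1$ (note: this is the correct numerical constraint, not that ``$l-r$ sections are never extended''), Petri injectivity on a general curve to make the obstruction maps surjective so that the inductive dimension bounds for the sets $S^{\lambda}_{i,\mathbf V}$ become equalities, and finally an optimization over $\lambda$. The paper makes that last step concrete via the identity $\sum_{i=1}^{\lambda_l}n_i(\lambda)^2=\sum_{i=1}^{l}(l-i+1)^2(\lambda_i-\lambda_{i-1})$, which turns the codimension into a linear function of $x_i=\lambda_i-\lambda_{i-1}$ minimized at the vertices of the region $\{x_i\geq 0,\ \sum_{i=1}^{l-r}(l-r+1-i)x_i\geq m+1\}$; this is exactly your ``single block'' heuristic, so that half of your proposal is sound.

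The genuine gap is in the reverse inequality, precisely at the point you flag as the main obstacle. Your proposed remedy --- proving that $\dim C_{\lambda,m}$ equals the na\"ive parameter count by constructing enough jets realizing each lifting pattern --- is not carried out and would be substantially harder than you suggest: the paper's estimate only bounds $\dim C_{\lambda,m}$ from above by $(m-\lambda_l+1)g+\dim S^{\lambda}_{\lambda_l-1,\kappa}$, since the truncation $\rho^m_{\lambda_l-1}$ restricted to $C_{\lambda,m}$ need not have full-dimensional fibers, and no surjectivity of that truncation onto $S^{\lambda}_{\lambda_l-1,\kappa}$ is established. The paper avoids this entirely with a soft argument you are missing: locally $W^r_d(C)$ is the pullback of a generic determinantal variety $Y\subset M_{(d+e+1-g)\times e}$, so $\lct_L(\pic^d(C),W^r_d(C))\leq\lct_{\Phi_L}(M_{(d+e+1-g)\times e},Y)$ by the pullback inequality for log canonical thresholds (\cite[Example 9.5.8]{Lar}), and the right-hand side is computed by Docampo's formula for generic determinantal varieties and equals exactly the claimed minimum. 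This comparison also disposes of your concern about the infimum over $m$ being attained at small $m$: the lower bound is only needed asymptotically, and the upper bound requires no jet-dimension computation at all, so no monotonicity-in-$m$ argument is necessary. Without either your sharpness construction or this determinantal comparison, your proposal proves only one inequality.
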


Recall that one can locally define  a map from $\pic^d(C)$ to a matrix space such that $W^r_d(C)$ is the pull back of a suitable generic determinantal variety. It follows from the above theorem that for generic curves, the local log canonical threshold of $(\pic^d(C),W^r_d(C))$ at $L$ is equal to the local log
canonical threshold of that generic determinantal variety at the image of $L$ (for the formula for the log canonical threshold of a generic determinantal variety, see Theorem 3.5.7. in \cite{Doc}).

In light of Ein-Lazarsfeld's result for general principal polarizations on abelian varieties, it would be interesting to understand  in general the properties of the jet schemes of such divisors.  We also hope that in the future one could use the behavior of the jet schemes of the Brill-Noether loci to distinguish other geometric properties of curves.

The paper is organized as follows. In the first section, we review some basic definitions and notation related to the jet schemes and the Brill-Noether loci. We give a criterion on the partition $\lambda$ associated to $\cL_m\in \pic^{g-1}(C)_{m}$ to have $\cL_m$ $\in\Theta_m$. We also prove the formula for  $h^0(\cL_m)$ in terms of the partition $\lambda$ associated to $\cL_m$. As a first application, we give the proof of Riemann's multiplicity formula that we sketched above.  In the second section, we prove Theorems A and B by estimating the dimensions of $W^r_d(C)_{m,L}$ for every $L\in W^r_d(C)$.

\section*{Acknowledgement}

It is a pleasure to thank Mircea Musta\c{t}\v{a} for introducing me to jet schemes and encouragement to start this project. I am grateful to Jesse Kass for his help and discussions Jacobians of curves. I would like to thank Linquan Ma and Sijun Liu for useful discussions in the earlier stages of this project.

\section{Introduction to jet schemes and varieties of special linear series on a curve}

Let $k$ be an algebraically closed field of characteristic zero. Given a scheme $X$ (of finite type) over $k$ and an integer $m\geq 0$, the jet scheme $X_m$  of order $m$ of $X$ is a scheme of finite type over $k$ satisfying the following
adjunction
\begin{equation}\label{adjunction}\tag{$\ddag$}
\Hom_{\Sch/k}(Y,X_m)\cong\Hom_{\Sch/k}(Y\times \Spec k[t]/(t^{m+1}), X)
\end{equation}
for every scheme $Y$ of  finite type over $k$.

In particular, the $k$-rational points of $X_m$ are identified with the $m$-jets of $X$, that is, with the morphisms $\Spec k[t]/(t^{m+1})\rightarrow X$. For every $j$ with $0\leq j\leq m$, the natural ring homomorphism $k[t]/(t^{m+1})\rightarrow  k[t]/(t^{j+1})$ induces a closed embedding \mbox{$\Spec k[t]/(t^{j+1})\rightarrow \Spec k[t]/(t^{m+1})$}.  The above adjunction induces a truncation map
\begin{equation*}
  \rho^m_j: ~X_m\rightarrow X_j\,.
\end{equation*}
For simplicity, we usually write $\pi_m^X$ or $\pi_m$ to denote the projection $\rho^m_0:X_m\rightarrow X$. For every fixed point $x\in X$, we write $X_{m,x}$ for the fiber of $\pi_m$ at $x$, the $m$-jets of $X$ centered at $x$. It turns out that the geometry of the jet schemes $X_m$ is closely related with the geometry of scheme $X$ itself.

Let $C$ be a smooth projective curve of genus $g$ over field $k$. We now recall the definition of $\pic^d(C)$. For every scheme $S$, let $p$ and $q$ be the projections of $S\times C$ onto $S$ and $C$ respectively.
{\it A family of degree $d$ line bundles on $C$ parameterized by a scheme $S$} is a line bundle on $C\times S$ which restricts to a degree
$d$ line bundle on $C\times\{s\}$, for every $s$ in $S$. We say that two such families $\cL$ and $\cL'$ are {\it equivalent} if
there is a line bundle $\cR$ on $S$ such that $\cL'\cong \cL\otimes q^*\cR$. $\pic^d(C)$ parameterizes degree $d$ line bundles on $C$; more  precisely, it represents the functor
$$F: \Sch/k \rightarrow \Set$$
where $F(S)$ is the set of equivalence classes of families of degree $d$ line bundles on $C$ parameterized by $S$. A universal line bundle $\cP$ on $C\times \pic^d(C)$ is a {\it Poincar\'e line bundle} of degree $d$ for $C$.

Recall now that $W^r_d(C)$ is the closed subset of $\pic^d(C)$ parameterizing line bundles $L$ of degree $d$ with $\dim |L|\geq r$:
$$W^r_d(C)=\{L \in \pic^{d}(C): \deg L=d, h^0(L)\geq r+1\}.$$
In particular, we have the theta divisor
$\Theta: =\{L \in \pic^{g-1}(C): h^0(C, L) \neq 0\}=W^0_{g-1}(C)$. Each $W^r_d(C)$ has a natural scheme structure as a degeneracy locus we now describe. \\

Let $E$ be any effective divisor on $C$ of degree $e\geq 2g-d-1$ and let $\cE=\cO(E)$.
The following facts are standard (see \cite[\S IV.3]{ACGH}).

For every family of degree $d$ line bundles $\cL$ on $S\times C$, the sheaves
${p}_*{(\cL\otimes q^*(\cE))}$ and ${p}_*(\cL\otimes q^*(\cE)\otimes\cO_{q^{-1}E})$ are locally free of
ranks $d+e+1-g$ and $e$, respectively. Moreover, there is an exact sequence on $S$
\begin{equation}\label{ses}
0\rightarrow {p}_*\cL\rightarrow {p}_*(\cL\otimes q^*(\cE))\xrightarrow{\Phi_\cL} {p}_*(\cL\otimes q^*(\cE)\otimes\cO_{q^
{-1}E})\rightarrow R^1{p}_*(\cL)\rightarrow 0.
\end{equation}

With the above notation, $W^r_d(C)$ represents the functor $\Sch/k \rightarrow \Set$ given by
\begin{equation*}
S \mapsto
\left\{
\begin{array}{c}
\text{equivalence classes of families } \cL \text{ of degree } d \text{ line bundles
on } \\
S\times C\xrightarrow{p} S  \text{ such that }  \rank(\Phi_{\cL})\leq d+e-g-r
\end{array}
\right \}.
\end{equation*}
It can be shown that the above condition $\rank(\Phi_{\cL})\leq d+e-g-r$ does not depend on the particular choice of $e$ and $E$.

In particular, the line bundle $L\in \pic^d(C)$ is in $W^r_d(C)$ if and only if  locally all the $e+d+1-g-r$
minors of $\Phi_L$ vanish.  Therefore $W^r_d(C)$ is a determinantal variety.

Let $T_m$ be the scheme $\Spec k[t]/(t^{m+1})$. We now discuss the jet schemes of the theta divisor $\Theta_m$ for all $m$. By the definition of
$\Theta$, we have $\Theta_m$ consists of line bundles $\cL_m \in \Pic(T_m\times C)$ such that $\deg(\cL_m|_{\{0\}\times C})=g-1$ and $\det(\Phi_{\cL_m})=0$ in $k[t]/(t^{m+1})$.

Given a positive integer $n$, we recall that a {\it partition of $n$} is a weakly increasing sequence $1\leq \lambda_1\leq \lambda_2 \leq \cdots\leq \lambda_l$
such that $\lambda_1+\cdots+\lambda_l=n$. The number $l$ of integers in the sequence is called the {length of the partition}, and the value $\lambda_l$ is the {largest term}. The set of partitions with length $l$ is denoted by $\Lambda_l$, and the set of partitions with length $l$ and largest term at
most $m$ is denoted by $\Lambda_{l,m}$. For every $i$ with $1\leq i\leq m$, if $\lambda\in \Lambda_{l,m}$, we define $\overline{\lambda}\in \Lambda_{l,i}$ by putting $\overline{\lambda}_k=\min\{\lambda_k,i\}$ for every $k$ with $1\leq k\leq l$. We thus obtain a natural map $\Lambda_{l,m}\rightarrow \Lambda_{l,i}$.

Fix an effective divisor $E$ of degree $e\geq 2g-d-1$ on $C$. We now associate a partition to every $\cL_m\in \pic^d(C)_m$. ${p}_*{(\cL_m\otimes q^*(\cE))}$ and ${p}_*(\cL_m \otimes q^*(\cE)\otimes\cO_{q^{-1}E})$ are locally free sheaves on $T_m$, hence they are finitely generated free modules over $k[t]/(t^{m+1})$.

\begin{definition}{\label{lambda}}
A family of line bundles $\cL_m$ of degree $d$ on $C$ over $T_m$ is called of type $\lambda\in \Lambda_{l,m+1}$ if there are bases of ${p}_*
{(\cL_m\otimes q^*(\cE))}$ and ${p}_*(\cL_m \otimes q^*(\cE)\otimes\cO_{q^{-1}E})$ in which $\Phi_{\cL_m}$ is represented by the matrix in $M_{(d+e+1-g)\times e}(k[t]/(t^{m+1}))$
~~~
\vspace{1cm}

 \[ \left( \begin{array}{cccccccc}
1& &  & &  & &0 & 0\\
&\ddots && && &&\\
& & 1& &  && & \\
& & &t^{\lambda_1} & & &\vdots&\vdots\\
&&&& \ddots &&&\\
& && && t^{\lambda_l}&0 &0\\
 \end{array} \right)\]
 \begin{picture}(0,0)
\put(225,75){\Large $0$}
\put(185,22){\Large $0$}
\end{picture}
\end{definition}

\begin{definition}\label{nrlambda}
Given a partition $\lambda$ , let  $r_i(\lambda)$ be the number of $k$ such that $\lambda_k=i$  and let $n_i(\lambda)$ be the number of $k$
such that $\lambda_k\geq i$.
\end{definition}

It is easy to see that the partition $\lambda$ in Definition \ref{lambda} does not depend on the choice of bases. If $L$ is the image of $\cL_m$ under the truncation map $\pi_m: \pic^d(C)_m\rightarrow \pic^d(C)$, then we will see below that the length of the partition associated to $\cL_m$ is $h^0(C,L)$.

We now give a criterion to decide whether an element $\cL_m\in \pic^{g-1}(C)_m$ is a jet of $\Theta$ in terms of the partition $\lambda$.
\begin{lemma}\label{condition}
For every family of line bundles $\cL_m\in \pic^{g-1}(C)_m$  centered at $L\in \Pic^{g-1}(C)$ and of type $\lambda\in \Lambda_{l,m+1}$, the following are equivalent:
\begin{enumerate}
\item[(i)] $\cL_m\in \Theta_{m,L}$.

\item[(i){\scriptsize${}^\prime$}] $\det(\Phi_{\cL_m})=0$ in $k[t]/(t^{m+1})$.

\item[(ii)] $\sum\limits_{i=1}^l{\lambda_i}\geq m+1$.

\item[(ii){\scriptsize${}^\prime$}] $\sum\limits_{j=1}^{m+1} r_j(\lambda)\cdot j\geq m+1$.

\item[(ii){\scriptsize${}^{\prime\prime}$}] $\sum\limits_{k=1}^{m+1} n_k(\lambda)\geq m+1$.
\end{enumerate}
\end{lemma}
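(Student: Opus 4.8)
The plan is to prove the chain of equivalences by first handling the "determinantal" equivalences $(i)\Leftrightarrow(i')$ and the purely combinatorial equivalences $(ii)\Leftrightarrow(ii')\Leftrightarrow(ii'')$, and then to close the loop by showing $(i')\Leftrightarrow(ii)$ using the normal form of $\Phi_{\cL_m}$ from Definition \ref{lambda}.

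First I would dispose of $(i)\Leftrightarrow(i')$: by the determinantal description of $\Theta$ recalled above, $\cL_m\in\Theta_m$ means exactly that the pullback of $\Theta$ under the corresponding jet $T_m\to\pic^{g-1}(C)$ is all of $T_m$, and since $\Theta$ is cut out locally by $\det\Phi$, this holds iff $\det(\Phi_{\cL_m})=0$ in $k[t]/(t^{m+1})$. One small point to check is that the fiber $\Theta_{m,L}$ is nonempty exactly when $L\in\Theta$; but this is automatic since $L$ is in the image of the jet, and the hypothesis that $\cL_m$ has type $\lambda\in\Lambda_{l,m+1}$ with $l=h^0(C,L)\ge 1$ already encodes $L\in\Theta$. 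Next, $(ii)\Leftrightarrow(ii')$ is just the observation that $\sum_i\lambda_i=\sum_j r_j(\lambda)\cdot j$, grouping the parts of $\lambda$ by their common value; and $(ii')\Leftrightarrow(ii'')$ is the standard conjugate-partition identity $\sum_{j\ge 1} r_j(\lambda)\cdot j=\sum_{k\ge 1} n_k(\lambda)$, valid since every part $\ge 1$ and bounded by $m+1$, so both sums range over $1\le j,k\le m+1$. (One can also argue $\sum_k n_k(\lambda)=\sum_i\lambda_i$ directly by counting the pairs $(k,i)$ with $1\le k\le\lambda_i$.)

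The substantive step is $(i')\Leftrightarrow(ii)$. Here I would use that $\Phi_{\cL_m}$, in suitable bases, is the $(d+e+1-g)\times e$ matrix of Definition \ref{lambda}, block-diagonal-ish with an identity block of size $(d+e+1-g)-l$ and a diagonal block $\mathrm{diag}(t^{\lambda_1},\dots,t^{\lambda_l})$, padded with zero columns. Since we are in the theta case $d=g-1$, the matrix is square of size $e$ (as $d+e+1-g=e$), with no zero-column padding, so $\det(\Phi_{\cL_m})$ is (up to a unit, namely $\pm 1$ from the identity block) equal to $t^{\lambda_1+\cdots+\lambda_l}$. Now $\det(\Phi_{\cL_m})=t^{\sum\lambda_i}$ vanishes in $k[t]/(t^{m+1})$ precisely when $\sum\lambda_i\ge m+1$, which is $(ii)$. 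The only care needed is to confirm that when $d=g-1$ the source and target bundles have the same rank $e$ so the matrix is genuinely square and the determinant makes sense as asserted in $(i')$; this follows from the standard ranks $d+e+1-g$ and $e$ recalled before \eqref{ses}.

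I expect the main obstacle to be bookkeeping rather than conceptual: making sure the normal form of Definition \ref{lambda} is invoked legitimately (the type $\lambda$ is well-defined independently of the chosen bases, as noted after Definition \ref{nrlambda}) and that base change commutes with forming $p_*(\cL\otimes q^*\cE)$ and $p_*(\cL\otimes q^*\cE\otimes\cO_{q^{-1}E})$ over the Artinian base $T_m$, so that $\Phi_{\cL_m}$ really is the reduction mod $t^{m+1}$ of a map of free modules and its determinant is computed entrywise. Granting the standard facts from \cite[\S IV.3]{ACGH} quoted above, each of these is routine, and the equivalences then follow by assembling $(i)\Leftrightarrow(i')\Leftrightarrow(ii)\Leftrightarrow(ii')\Leftrightarrow(ii'')$.
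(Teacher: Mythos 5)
Your proposal is correct and follows essentially the same route as the paper: the equivalence of (i) and (i${}'$) from the determinantal description of $\Theta$, the computation $\det(\Phi_{\cL_m})=t^{\lambda_1+\cdots+\lambda_l}$ in the normal form of Definition \ref{lambda} to get (i${}'$)$\Leftrightarrow$(ii), and the combinatorial identities $\sum_i\lambda_i=\sum_j r_j(\lambda)\cdot j=\sum_k n_k(\lambda)$ for the rest. Your extra remarks on the squareness of the matrix when $d=g-1$ and on base change over $T_m$ are sound but not needed beyond what the paper already takes as standard.
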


\begin{proof}
Recall that $\Theta = W^{0}_{g-1}\subset \pic^{g-1}(C)$.  With the above notation, for every family of line bundles $\cL_m$ in $\Theta_m$, the sheaves ${p}_*{(\cL_m\otimes q^*(\cE))}$ and ${p}_*(\cL_m \otimes q^*(\cE)\otimes\cO_{q^{-1}E})$ are locally free of rank $e$.  The definition shows that the theta divisor parameterizes the line bundles $\cL_m$ for which  $\det(\Phi_{\cL_m})=0$. This proves the equivalence between (i) and (i){\scriptsize${}^\prime$}.
It is clear that with the choice of basis in Definition \ref{lambda}, $\det(\Phi_{\cL_m})=t^{\lambda_1+\cdots +\lambda_l}\in k[t]/(t^{m+1})$. Therefore the determinant vanishes if and only if $\sum\limits_i^l{\lambda_i}\geq m+1$.

In order to complete the proof of the lemma, it suffices to show that $$\sum\limits_{i=1}^l{\lambda_i}=\sum\limits_{j=1}^{m+1} r_j(\lambda)\cdot j=\sum\limits_{k=1}^{m+1} n_k(\lambda).$$ The first equality is
clear by the definition of $r_j(\lambda)$. The second equality follows from $n_k(\lambda)=\sum\limits_{j\geq k} r_j(\lambda)$. Indeed, $\sum\limits_{k=1}^{m+1}
n_k(\lambda)=\sum\limits_{k=1}^{m+1} \sum\limits_{j\geq k} r_j(\lambda)= \sum\limits_{j=1}^{m+1} r_j(\lambda)\cdot j$.
\end{proof}

Using the definition of $W^r_d(C)$, we have the following description of $W^r_d(C)_{m,L}$, which gives a generalization of Lemma \ref{condition}.

\begin{lemma}\label{conditionw} Let $\cL_m\in \pic^d(C)_m$ have type $\lambda=(1\leq \lambda_1\leq \cdots\leq\lambda_l\leq m+1)$. The following are equivalent:
\begin{enumerate}
\item[(i)] $\cL_m\in W^r_d(C)_{m,L}$.

\item[(i')] All the $(e+d+1-g-r)$ minors of $\Phi_{\cL_m}$ vanish in $k[t]/(t^{m+1})$.

\item[(ii)] $\sum\limits_{i=1}^{l-r }{\lambda_i}\geq m+1$.

\item[(ii')] $\sum\limits_{i=1}^{l-r}(l-i-r+1)(\lambda_i-\lambda_{i-1})\geq m+1$, where $\lambda_0=0$.
\end{enumerate}
\end{lemma}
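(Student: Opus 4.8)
The plan is to mimic the proof of Lemma~\ref{condition} and establish the chain $(i)\Leftrightarrow(i')\Leftrightarrow(ii)\Leftrightarrow(ii')$. The equivalence $(i)\Leftrightarrow(i')$ should be a formal unwinding of the definitions: by the adjunction $(\ddag)$, a $k$-point $\cL_m$ of $\pic^d(C)_m$ with center $L$ is the same datum as a family of degree $d$ line bundles on $T_m\times C$, and by the determinantal description of $W^r_d(C)$ recalled above such a family represents a point of $W^r_d(C)_{m,L}$ exactly when $\rank(\Phi_{\cL_m})\le d+e-g-r$ over $k[t]/(t^{m+1})$; by the degeneracy-locus scheme structure --- and since forming $\Phi$ commutes with base change, just as for an ordinary point $L$ above --- this rank condition is equivalent to the vanishing in $k[t]/(t^{m+1})$ of all $(d+e-g-r+1)=(e+d+1-g-r)$ minors of $\Phi_{\cL_m}$, and one notes as usual that this is independent of the auxiliary divisor $E$.

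The content of the lemma is $(i')\Leftrightarrow(ii)$, which I would read off directly from the normal form of Definition~\ref{lambda}. In that basis $\Phi_{\cL_m}$ is the $(d+e+1-g)\times e$ matrix that vanishes off the main diagonal, with the first $a:=d+e+1-g-l$ diagonal entries equal to $1$ and the next $l$ equal to $t^{\lambda_1},\dots,t^{\lambda_l}$ (so $a\ge 0$, the $l$ entries $t^{\lambda_i}$ occupying $l$ of the $d+e+1-g$ diagonal slots). A diagonal matrix has the property that a minor on a row set $R$ and a column set $J$ vanishes unless $R=J$, in which case it equals the product of the diagonal entries indexed by $R$; hence every nonzero minor of size $s$ is a product of $s$ of the entries $1,\dots,1,t^{\lambda_1},\dots,t^{\lambda_l}$, and it vanishes in $k[t]/(t^{m+1})$ iff the sum of the chosen exponents is $\ge m+1$. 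With $s=e+d+1-g-r$ one has $s-a=l-r$, so the $s$-minor of least exponent-sum uses all $a$ units and the $l-r$ smallest $\lambda_i$: if $l\le r$ it is a unit, so $(i')$ fails, and $(ii)$ fails too since $\sum_{i=1}^{l-r}\lambda_i$ is empty; if $l>r$ it is $t^{\lambda_1+\cdots+\lambda_{l-r}}$, so all $(e+d+1-g-r)$ minors vanish in $k[t]/(t^{m+1})$ precisely when $\lambda_1+\cdots+\lambda_{l-r}\ge m+1$, which is $(ii)$. This is the verbatim generalization of the computation $\det(\Phi_{\cL_m})=t^{\lambda_1+\cdots+\lambda_l}$ in the proof of Lemma~\ref{condition}.

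Finally, $(ii)$ and $(ii')$ are literally the same condition: writing $c_i:=l-r-i+1$, so that $c_i-c_{i+1}=1$ and $c_{l-r}=1$, summation by parts telescopes $\sum_{i=1}^{l-r}(l-i-r+1)(\lambda_i-\lambda_{i-1})$ (with $\lambda_0=0$) to $\sum_{i=1}^{l-r}\lambda_i$, so $(ii')$ is identical to $(ii)$. I do not expect any genuine obstacle; the one place requiring care is the index bookkeeping in $(i')\Leftrightarrow(ii)$ --- that the unit block has size exactly $a=d+e+1-g-l$, that $s-a=l-r$, and that a nonzero minor of a diagonal matrix has matching row and column sets --- after which the identification of the minimal-exponent minor, and hence the whole lemma, is immediate.
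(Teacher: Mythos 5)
Your proof is correct and follows exactly the route the paper intends: the paper omits the proof of this lemma, stating only that it is "very similar to that of Lemma \ref{condition}," and your argument is precisely that generalization, replacing the single determinant $t^{\lambda_1+\cdots+\lambda_l}$ by the $(e+d+1-g-r)$-minors of the diagonal normal form, whose minimal-order nonzero one is $t^{\lambda_1+\cdots+\lambda_{l-r}}$. The bookkeeping (unit block of size $d+e+1-g-l$, matching row and column sets for a nonzero minor, and the telescoping identity showing (ii) and (ii') are the same quantity) is all handled correctly.
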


The proof of this lemma is very similar to that of Lemma \ref{condition}, so we leave it to the reader.

Our first goal is to recover Riemann's Singularity Theorem using jet schemes.

\begin{theorem}\label{multiplicity thm}
For every $L\in \Theta$, we have ${\rm{mult}}_{L}\Theta=h^0(C,L)$.
\end{theorem}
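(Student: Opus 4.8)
The plan is to extract both inequalities $\mult_L\Theta \geq l$ and $\mult_L\Theta \leq l$ (where $l = h^0(C,L)$) from the jet scheme machinery developed above, using the general principle that the multiplicity of $X$ at a point $x$ equals the smallest $m$ such that $X_{m,x} \neq (\AAA^N)_{m,x}$ where $N = \dim X$ and $X \subset \AAA^N$ locally; more precisely, $\mult_x X = \min\{m : \dim X_{m,x} < (N-1)(m+1)\}$, or one uses the characterization via $X_{m,x}$ being a proper subset of the ambient fiber when $X$ is a hypersurface. Since $\Theta$ is a divisor in $\pic^{g-1}(C)$, which is smooth of dimension $g$, the ambient fiber $\pic^{g-1}(C)_{m,L}$ has dimension $gm$, and I want to show that $\Theta_{l-1,L} = \pic^{g-1}(C)_{l-1,L}$ (forcing $\mult_L\Theta \geq l$) but $\Theta_{l,L} \neq \pic^{g-1}(C)_{l,L}$ (forcing $\mult_L\Theta \leq l$).

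First I would handle the easy inequality $\mult_L\Theta \geq l$: for any $\cL_{l-1} \in \pic^{g-1}(C)_{l-1,L}$ of type $\lambda \in \Lambda_{l,l}$, the total sum $\sum_{i=1}^l \lambda_i \geq l \geq (l-1)+1$ automatically holds since each $\lambda_i \geq 1$, so by Lemma~\ref{condition}(ii) every such jet lies in $\Theta_{l-1,L}$; hence $\Theta_{l-1,L} = \pic^{g-1}(C)_{l-1,L}$ and the order of vanishing of the defining equation is at least $l$. The subtle direction is $\mult_L\Theta \leq l$, equivalently $\Theta_{l,L} \subsetneq \pic^{g-1}(C)_{l,L}$. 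Following the sketch in the introduction, I argue by contradiction: if $\Theta_{l,L} = \pic^{g-1}(C)_{l,L}$, then projecting via $\rho^l_1$ the image is all of $\pic^{g-1}(C)_{1,L} = \Theta_{1,L}$, and I need to show this is impossible. The key input is the identification of $\pic^{g-1}(C)_{1,L}$ (the tangent space fiber) with $H^1(C,\cO_C)$, and the description of the obstruction to lifting a section $s \in H^0(C,L)$ to a section of $\cL_1$ via the cup-product pairing $\nu: H^0(C,L)\otimes H^1(C,\cO_C) \to H^1(C,L)$. For a jet $\cL_l$ of type $\lambda$ with $\sum_{i=1}^l \lambda_i \geq l+1$, at least one $\lambda_i \geq 2$, which by the formula for $h^0(\cL_j)$ in terms of $\lambda$ (to be established in Section~1) means that some nonzero section of $L$ extends to a section over $T_1$; i.e., there is $0 \neq s \in H^0(C,L)$ with $\nu(s \otimes \xi) = 0$ where $\xi \in H^1(C,\cO_C)$ corresponds to $\cL_1$.

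Thus the assumption $\Theta_{l,L} = \pic^{g-1}(C)_{l,L}$ would force: for every $\xi \in H^1(C,\cO_C)$ there exists $0 \neq s \in H^0(C,L)$ with $\nu(s\otimes\xi) = 0$. I would then derive a contradiction from a dimension count on the "bad locus" $B = \{\xi \in H^1(C,\cO_C) : \ker(\nu(-\otimes\xi)) \neq 0\}$. By the base-point-free pencil trick / Serre duality, $\nu(-\otimes\xi)$ failing to be injective is governed by a determinantal/rank condition, and the claim (invoked in the introduction) is that $B$ has codimension one in $H^1(C,\cO_C) \cong k^g$ — hence is a proper subset, contradicting "$B$ is everything." Concretely, $\nu$ is dual (by Serre duality) to the multiplication map $\mu_0: H^0(C,L)\otimes H^0(C,K-L) \to H^0(C,K)$, and the set of $\xi$ for which $\nu(-\otimes\xi)$ has nontrivial kernel corresponds, via the geometry of the canonical embedding, to a hyperplane section type condition; standard Brill-Noether theory (e.g.\ \cite[Ch.~IV]{ACGH}) gives that this locus is a proper closed subset, in fact a hypersurface. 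So $\dim B \leq g-1 < g$, the contradiction.

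**The main obstacle** I anticipate is making the passage from a type-$\lambda$ jet $\cL_l$ with $\lambda_i \geq 2$ to the concrete cohomological statement "$\exists\, 0\neq s\in H^0(L)$ with $\nu(s\otimes\xi)=0$" fully rigorous — this requires the interpretation of the partition $\lambda$ (via the matrix normal form in Definition~\ref{lambda}) in terms of which sections of $L$ lift to $\cL_j$, together with the standard but slightly delicate identification of $\pic^{g-1}(C)_{1,L}$ with $H^1(C,\cO_C)$ and of the lifting obstruction with $\nu$. I would isolate the needed statement — that the rank of the truncation map $H^0(\cL_1) \to H^0(L)$, equivalently $l - n_2(\lambda)$ or similar in terms of $\lambda$, is positive precisely when some $\lambda_i \geq 2$ — as a consequence of the $h^0(\cL_m)$ formula proved in Section~1, and cite the deformation-theoretic description of $\Phi_{\cL_1}$ relative to $\Phi_L$. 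Once that dictionary is in place, the dimension count on the bad locus $B$ is classical Brill-Noether geometry and the contradiction closes the proof.
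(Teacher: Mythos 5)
Your proposal is correct and follows essentially the same route as the paper: the jet-theoretic characterization of multiplicity, the automatic equality $\Theta_{m,L}=\pic^{g-1}(C)_{m,L}$ for $m<l$ from the partition criterion, and for $m=l$ the projection to first-order jets, the lifting obstruction via $\nu$, and a dimension count showing that the locus of $\xi\in H^1(C,\cO_C)$ annihilated by some nonzero section of $L$ is proper. The paper makes your last step precise with an incidence variety $\mathcal{Z}\subset\PP(H^0(C,L))\times H^1(C,\cO_C)$ whose fibers over $\PP(H^0(C,L))$ have codimension $l=h^1(L)$ (because multiplication by a nonzero section is injective on $H^0(C,K_C\otimes L^{-1})$, so $\nu(s_0\otimes-)$ is surjective), giving $\dim\mathcal{Z}=g-1<g$ directly rather than quoting Brill--Noether theory for the properness of the bad locus.
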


\begin{remark}\label{smoothness}
Note that the multiplicity of a divisor at a point is one if and only if the divisor is smooth at that point, hence Theorem \ref{multiplicity thm} implies in particular that a line bundle $L\in \Theta$ is a smooth point if and only if $h^0(C,L)=1$.
\end{remark}

Before proving the theorem we need some preparations. For every degree $d$ line bundle $L$, we shall first describe the fiber of $\rho^m_{m-1}: \pic^{d}(C)_{m,L}\rightarrow \pic^d(C)_{m-1,L}$.

Let $E$ be the effective divisor  of degree $e\geq 2g-d-1$ in Definition \ref{lambda}. By the universal property of $\pic^d(C)$, every $\cL_m \in \pic^d(C)_{m,L}$ is identified with a line bundle on $C\times T_m$.  Let us fix a line bundle $L\in \pic^d(C)$ and a family of line bundles $\cL_m\in \pic^d(C)_{m,L}$ lying over $L$. For every $0\leq i\leq m$, we denote by $\cL_i$ the image of $
\cL_m$ in $\pic^d(C)_{i,L}$ under the truncation map $\pic^d(C)_m\rightarrow \pic^d(C)_i$.  By the short exact sequence \eqref{ses}, $H^0(\cL_i)$ is the kernel of the morphism
\begin{equation*}
\Phi_{\cL_i}: M_i=H^0(\cL_i\otimes q^*(\cE))\rightarrow N_i=H^0(\cL_i\otimes q^*(\cE)\otimes\cO_{q^{-1}E}).
\end{equation*}

There is a $k[t]/(t^{m+1})$-module map $\pi_{i}^m: H^0(\cL_m)\rightarrow H^0(\cL_i)$ induced by restriction of sections. This can be described as follows. Applying the Base-change Theorem to the morphism $T_{i}\hookrightarrow T_{m}$, we
obtain the following commutative diagram
\begin{equation*}
\begin{array}[c]{ccccc}
H^0(\cL_m)&{\hookrightarrow}& M_m& \xrightarrow{\Phi_{\cL_m}} &N_m\\
\downarrow\scriptstyle{\pi_i^m}&&\downarrow\scriptstyle{\rho_M}&&\downarrow\scriptstyle{\rho_N}\\
H^0(\cL_{i})&{\hookrightarrow}&M_{i}&{\xrightarrow{\Phi_{\cL_i}}} &N_i
\end{array}
\end{equation*}
Clearly $M_i=M_m \otimes_{k[t]/(t^{m+1})} k[t]/(t^{i+1})$ and $N_i=N_m \otimes_{k[t]/(t^{m+1})} k[t]/(t^{i+1})$ and the vertical maps are induced by the quotient map $ k[t]/(t^{m+1})\rightarrow k[t]/(t^{i+1})$.

\begin{lemma}\label{filtration lemma}
For every $0\leq i\leq m$, there is an embedding of $k[t]/(t^{m+1})$-modules
$$v^m_i: H^0(\cL_i)\hookrightarrow H^0(\cL_m)$$
such that the image is the kernel of $\pi_{m-i-1}^m: H^0(\cL_m)\rightarrow H^0(\cL_{m-i-1})$.
\end{lemma}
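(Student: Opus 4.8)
The plan is to construct the embedding $v^m_i$ explicitly using the description of $H^0(\cL_j)$ as $\Ker(\Phi_{\cL_j})$ together with the freeness of the modules $M_m$ and $N_m$ over $R := k[t]/(t^{m+1})$. The key observation is that after choosing the bases of Definition \ref{lambda}, the map $\Phi_{\cL_m}$ is given by a diagonal-type matrix with entries $1$ (in $l-$many, or rather $(d+e+1-g)-l$ many, coordinates equal to $1$) and entries $t^{\lambda_1},\dots,t^{\lambda_l}$. Hence $H^0(\cL_m)=\Ker(\Phi_{\cL_m})$ is the free $R$-submodule of $M_m$ generated by the elements $t^{m+1-\lambda_k}\cdot f_k$, where $f_k$ is the $k$-th of the "special" basis vectors (the one on which $\Phi_{\cL_m}$ acts by $t^{\lambda_k}$), since $t^{\lambda_k}\cdot t^{m+1-\lambda_k}f_k = t^{m+1}f_k = 0$ in $N_m$. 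From this concrete picture the whole statement becomes a direct computation.

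First I would fix these adapted bases and write $H^0(\cL_m)=\bigoplus_{k=1}^{l} R\cdot(t^{m+1-\lambda_k}f_k)$. Next, because the bases of $M_i$, $N_i$ are obtained from those of $M_m$, $N_m$ by reduction mod $t^{i+1}$ (as noted in the commutative base-change diagram), the matrix of $\Phi_{\cL_i}$ in the reduced bases is the reduction of the same diagonal matrix, so $H^0(\cL_i)=\bigoplus_{k=1}^{l}(k[t]/(t^{i+1}))\cdot(t^{i+1-\min\{\lambda_k,i+1\}}\bar f_k)$, i.e. it is generated by $t^{\max\{0,\,i+1-\lambda_k\}}\bar f_k$. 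I would then define $v^m_i$ on generators by sending the generator $t^{\max\{0,i+1-\lambda_k\}}\bar f_k$ of the $k$-th summand of $H^0(\cL_i)$ to $t^{m-i}\cdot t^{\max\{0,i+1-\lambda_k\}}\,f_k \in H^0(\cL_m)$ — equivalently, lift a section over $T_i$ to a section over $T_m$ by first multiplying by $t^{m-i}$ — and check this is a well-defined $R$-module map (the kernel relations $t^{i+1}(t^{\max\{0,i+1-\lambda_k\}}\bar f_k)=0$ map to $t^{m+1}(\cdots)=0$) and is injective (multiplication by $t^{m-i}$ followed by the evident embedding is injective on the relevant cyclic modules, using $m-i + \max\{0,i+1-\lambda_k\} < m+1$, which holds because $\lambda_k\geq 1$).

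Finally I would identify the image with $\Ker(\pi^m_{m-i-1})$. The map $\pi^m_{m-i-1}\colon H^0(\cL_m)\to H^0(\cL_{m-i-1})$ is induced by reduction mod $t^{m-i}$ on each summand $R\cdot(t^{m+1-\lambda_k}f_k)$, so its kernel is exactly the set of elements of $H^0(\cL_m)$ that, written in the $f_k$-coordinates, are divisible by $t^{m-i}$; by the computation above this is precisely $\bigoplus_k t^{m-i}\cdot t^{\max\{0,i+1-\lambda_k\}}R\cdot f_k$ — one must check $t^{m-i}\cdot R\cdot (t^{m+1-\lambda_k}f_k) = t^{m-i}t^{\max\{0,i+1-\lambda_k\}}R f_k$ inside $M_m$, which reduces to the identity $\min\{m+1-\lambda_k,\ m-i\} = m-i+\max\{0,\ i+1-\lambda_k\}$ after accounting for the annihilator $t^{m+1}$ — which is exactly the image of $v^m_i$. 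I expect the main obstacle to be purely bookkeeping: keeping the truncation indices $i$, $m-i-1$, $m-i$ and the exponents $\lambda_k$, $m+1-\lambda_k$, $\max\{0,i+1-\lambda_k\}$ straight, and making sure the degenerate cases ($\lambda_k = m+1$, or $\lambda_k\leq i$, or $i=m$) are handled uniformly; the underlying algebra is elementary once the adapted bases are in place.
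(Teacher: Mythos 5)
Your construction is correct, and the map you define is in fact the same one the paper uses: multiplication by $t^{m-i}$ induces $u^m_i\colon M_i\to M_m$ and $w^m_i\colon N_i\to N_m$ compatibly with $\Phi$, hence a map $v^m_i$ on kernels. Where you genuinely diverge is in identifying the image with $\ker\pi^m_{m-i-1}$: the paper argues basis-freely, noting that the image of $u^m_i$ is $\Ann_{M_m}(t^{i+1})$ and that injectivity of $w^m_i$ forces $\mathrm{im}(v^m_i)=H^0(\cL_m)\cap\Ann_{M_m}(t^{i+1})=\Ann_{H^0(\cL_m)}(t^{i+1})=\ker\pi^m_{m-i-1}$, whereas you compute everything in the adapted bases of Definition~\ref{lambda}. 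Your route buys explicitness (the cyclic decomposition also reproves Lemma~\ref{dimension lemma} in passing) at the cost of exactly the bookkeeping you anticipate, and three small slips should be repaired: $H^0(\cL_m)=\bigoplus_k R\cdot t^{m+1-\lambda_k}f_k$ is a direct sum of cyclic modules $R/(t^{\lambda_k})$, not a free submodule; injectivity of $v^m_i$ on the $k$-th summand follows because the annihilator of $t^{m-i+\max\{0,i+1-\lambda_k\}}f_k$ in the free module $M_m$ is exactly $(t^{\min\{\lambda_k,i+1\}})$, which matches that of the source generator (the inequality you cite only shows the generator has nonzero image, which does not suffice for a map of cyclic modules); and your displayed identity is garbled --- the kernel of $\pi^m_{m-i-1}$ on the $k$-th summand is the intersection $R\,t^{m+1-\lambda_k}f_k\cap t^{m-i}M_m=t^{\max\{m+1-\lambda_k,\,m-i\}}Rf_k$, not the product $t^{m-i}\cdot R\,t^{m+1-\lambda_k}f_k$ (which is strictly smaller unless $\lambda_k=m+1$), and the identity to verify is $\max\{m+1-\lambda_k,m-i\}=m-i+\max\{0,i+1-\lambda_k\}$, with $\max$, not $\min$, on the left. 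With those repairs the argument goes through and agrees with the paper's conclusion.
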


\begin{proof}

The multiplication with $t^{m-i}$ defines a linear map of $k[t]/(t^{m+1})$-modules $$k[t]/(t^{i+1})\rightarrow k[t]/(t^{m+1})$$ and induces embeddings of $k[t]/(t^{m+1})$ modules $M_i
\xrightarrow{u^m_i} M_m$ and $N_i\xrightarrow{w^m_i} N_m$. Therefore it induces an injective $k[t]/(t^{m+1})$-module morphism $v^m_i: H^0(\cL_i)\rightarrow
H^0(\cL_m).$

It is clear that the image of the embedding $u_i^m: M_i \rightarrow M_m$ is $\Ann_{M_m}(t^{i+1})$. By definition, we have $H^0(\cL_m)
\cap \Ann_{M_m}(t^{i+1})=\Ann_{H^0(\cL_m)}(t^{i+1})$. The multiplication map $w^m_i: N_i\rightarrow N_m$ is injective, and one deduces easily that the image of $v_i$ is $\Ann_{H^0(\cL_m)}(t^{i+1})$. Since $\ker\pi_{m-i-1}^m=\Ann_{H^0(\cL_m)}(t^{i+1})$, this completes our proof.
\end{proof}

\begin{lemma}\label{dimension lemma}
For every family of line bundles $\cL_m\in \pic^d(C)_m$ of type $\lambda\in \Lambda_{l,m+1}$, we have
$$h^0(\cL_m)=\sum\limits_{k=1}^{m+1} n_k(\lambda).$$
\end{lemma}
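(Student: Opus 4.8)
The plan is to read off $h^0(\cL_m)$ directly from the exact sequence \eqref{ses}, once $\Phi_{\cL_m}$ has been put in the normal form guaranteed by the hypothesis that $\cL_m$ is of type $\lambda$. First I would specialize \eqref{ses} to $S=T_m$, exactly as in the discussion preceding Lemma~\ref{filtration lemma}: this identifies $H^0(\cL_m)=p_*\cL_m$ with $\Ker\bigl(\Phi_{\cL_m}\colon M_m\to N_m\bigr)$, where $M_m=H^0(\cL_m\otimes q^*\cE)$ and $N_m=H^0(\cL_m\otimes q^*\cE\otimes\cO_{q^{-1}E})$ are free $k[t]/(t^{m+1})$-modules. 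So everything reduces to computing the $k$-dimension of the kernel of a single $k[t]/(t^{m+1})$-linear map between free modules.

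Second, I would invoke Definition~\ref{lambda}: since $\cL_m$ is of type $\lambda\in\Lambda_{l,m+1}$, there are bases of $M_m$ and $N_m$ in which $\Phi_{\cL_m}$ is block diagonal, namely it carries a free rank $(d+e+1-g-l)$ direct summand of $M_m$ isomorphically onto a summand of $N_m$, sends the remaining $l$ basis vectors $h_1,\dots,h_l$ of $M_m$ to $t^{\lambda_1}g_1,\dots,t^{\lambda_l}g_l$ for suitable distinct basis vectors $g_1,\dots,g_l$ of $N_m$, and is zero on the complementary part of $N_m$. Reading off the kernel from this block form, an element $\sum_i a_if_i+\sum_j b_jh_j$ lies in $\Ker\Phi_{\cL_m}$ if and only if all $a_i=0$ and $t^{\lambda_j}b_j=0$, so
$$\Ker\Phi_{\cL_m}=\bigoplus_{j=1}^{l}\Ann_{k[t]/(t^{m+1})}\!\bigl(t^{\lambda_j}\bigr)=\bigoplus_{j=1}^{l}t^{\,m+1-\lambda_j}\,k[t]/(t^{m+1}).$$
Since $\dim_k t^{\,m+1-\lambda_j}k[t]/(t^{m+1})=\lambda_j$ (using $1\le\lambda_j\le m+1$), this gives $h^0(\cL_m)=\sum_{j=1}^{l}\lambda_j$.

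Third and finally, I would pass from $\sum_{j}\lambda_j$ to $\sum_{k=1}^{m+1}n_k(\lambda)$ using the identity $\sum_{j=1}^l\lambda_j=\sum_{k=1}^{m+1}n_k(\lambda)$ already established in the proof of Lemma~\ref{condition} (via $n_k(\lambda)=\sum_{j\ge k}r_j(\lambda)$), which completes the argument. There is no genuine obstacle here; the only point deserving care is the claim that the identity block of $\Phi_{\cL_m}$ contributes nothing to the kernel and that the kernel splits as the displayed direct sum with no interaction between the two blocks --- but this is immediate from the block-diagonal structure and the fact that the $g_j$ are part of a basis of $N_m$, distinct from the images of the identity block. (Alternatively, the same count follows by induction on $m$ from the filtration in Lemma~\ref{filtration lemma}, since the image of $\cL_m$ in $\pic^d(C)_{m-1}$ is of type $(\min(\lambda_1,m),\dots,\min(\lambda_l,m))$; but the direct computation above is shorter.)
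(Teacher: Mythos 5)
Your proof is correct, and it rests on the same essential input as the paper's: the normal form of $\Phi_{\cL_m}$ from Definition~\ref{lambda} together with the identification of $H^0(\cL_m)$ with $\Ker(\Phi_{\cL_m}\colon M_m\to N_m)$ coming from \eqref{ses}. The difference is only in how the kernel dimension is extracted. You compute it in one step as a $k[t]/(t^{m+1})$-module, $\Ker\Phi_{\cL_m}=\bigoplus_{j}\Ann(t^{\lambda_j})$, getting $\sum_j\lambda_j$ and then converting to $\sum_k n_k(\lambda)$ via the identity already recorded in the proof of Lemma~\ref{condition}; this is clean and correct (the identity block contributes nothing to the kernel, and each $\Ann(t^{\lambda_j})=t^{m+1-\lambda_j}k[t]/(t^{m+1})$ has $k$-dimension $\lambda_j$ since $\lambda_j\le m+1$). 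The paper instead expands $\Phi_{\cL_m}$ as $A_0+A_1t+\cdots+A_mt^m$, views it as the $k$-linear block upper-triangular matrix $\Psi_{\cL_m}$, and proves the increment $h^0(\cL_m)-h^0(\cL_{m-1})=n_{m+1}(\lambda)$ by induction on $m$ using $\rank\Psi_{\cL_m}=\rank\Psi_{\cL_{m-1}}+\sum_i\rank A_i$. The inductive version produces the formula for all truncations $\cL_j$ simultaneously, which is what feeds into Remark~\ref{image}; your version recovers the same increments by applying the lemma to $\cL_j$ of type $\overline\lambda$, as you note parenthetically. Either route is complete; yours is arguably shorter.
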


\begin{proof}
Choose bases $\{e_j\}$ and $\{f_h\}$ for the free modules $M_m$ and $N_m$ such that $\Phi_{\cL_m}$ is represented by the matrix
\begin{equation*}
\nonumber \left( \begin{array}{cccccccc}
1& &  & &  & &0 & 0\\
&\ddots && && &&\\
& & 1& &  && & \\
& & &t^{\lambda_1} & & &\vdots&\vdots\\
&&&& \ddots &&&\\
& && && t^{\lambda_l}&0 &0\\
 \end{array} \right)
 = A_0+A_1\cdot t +\cdots+ A_m\cdot t^m.
\end{equation*}
\begin{picture}(0,0)
\put(148,92){\Large $0$}
\put(100,30){\Large $0$}
\end{picture}

All  $A_i$ are $(d+e+1-g)\times e$ matrices over the field $k$.
For every $0\leq i\leq m$ the image of $\{e_j\}$ under the map $\rho_M: M_m\rightarrow M_i$ gives a basis of $M_i$ over $k[t]/(t^{i+1})$. Similarly, the image of $\{f_h\}$ under $\rho_N: N_m\rightarrow N_i$ gives a basis of $N_i$. With respect to these bases, the homomorphism $\Phi_{\cL_i}$ is represented by the matrix $A_0+A_1\cdot t +\cdots+ A_i\cdot t^i$.

We first consider the case $m=0$. $\Phi_L$ is represented by $A_0$, which is a diagonal matrix with $1$ showing up on the first $e+d+1-g-l$ rows,
hence $h^0(L)=\dim_k\ker\Phi_L=l=n_1(\lambda)$.

Let $\lambda'$ be the type of $\cL_{m-1}$. One can check easily that $\lambda'$ is the image of $\lambda$ under the natural map $\Lambda_{l,m+1}\rightarrow \Lambda_{l,m}$. For $k\leq m$, we have $n_k(\lambda')=n_k(\lambda)$. Now it suffices to show that $h^0(\cL_m)-h^0(\cL_{m-1})=n_{m+1}(\lambda)$ for $m\geq 1$. For each $i>0$, $A_i$ is a diagonal matrix with entries $0$ or $1$, with $1$'s  in the rows $(e+d+1-g)-l+r_1+\cdots+r_{i-1}+j$,  with $1\leq j\leq r_{i}$, where $r_i=r_i(\lambda)$ (See Definition \ref{nrlambda}). We now consider $\{t^k\cdot e_j\}$ and $\{t^k\cdot f_h\}$ where $0\leq k\leq m$ to be the bases of $M_m$ and $N_m$,  respectively, as linear spaces over $k$. The matrix associated to $\Phi_{\cL_m}$ as a morphism of $k-$linear spaces has the upper triangular form

\begin{equation*}
\nonumber  \Psi_{\cL_m}=\left( \begin{array}{cccccccc}
A_0&A_1 &A_2 &\cdots &A_{m-1}& A_m&0&0\\
& A_0&A_1  &\cdots& A_{m-2}&A_{m-1}&0&0\\
&&A_0 &\ddots  &  &\vdots&\vdots &\vdots \\
&&&\ddots&\ddots &\vdots&\vdots&\vdots\\
&&&  & A_0&A_1&0&0\\
&& && &A_0&0&0\\
 \end{array} \right)
 \end{equation*}
 \begin{picture}(0,0)
\put(170,30){\Large $0$}
\end{picture}

Therefore the associated matrix $\Psi_{\cL_{m-1}}$ of $\Phi_{\cL_{m-1}}$ as a $k$-linear map is the bottom right corner submatrix of the associated matrix of $\Phi_{\cL_m}$, obtained by omitting the rows and columns containing the left upper corner $A_{0}$.

In each row and column of the matrix $\Psi_{\cL_m}$, there is at most one nonzero element. Therefore $\rank \Psi_{\cL_m}=\rank \Psi_{\cL_{m-1}}+ \sum\limits_{i=0}^{m} \rank A_i$.  Since $\rank(A_0)=d+e+1-g-l$ and $\rank(A_i)=r_i(\lambda)$ for $1\leq i\leq m$, we deduce that $\dim_k\ker \Phi_{\cL_m}-\dim_k\ker\Phi_{\cL_{m-1}}=n_{m+1}(\lambda)$.  Therefore $h^0(\cL_m)-h^0(\cL_{m-1})=n_{m+1}(\lambda)$.
\end{proof}

\begin{remark}\label{image}
For every $j$ with $0\leq j\leq m$, Lemmas \ref{filtration lemma} and \ref{dimension lemma} imply that the image of the morphism $\pi^j_0: H^0(\cL_j)\rightarrow H^0(L)$ has dimension equal to
$$h^0(\cL_{j})-\dim_k \ker(\pi^j_0)=h^0(\cL_{j})-h^0(\cL_{j-1})=n_{j+1}(\lambda).$$ Therefore $\pi^{j}_0$ is a zero map if and only if $n_{j+1}(\lambda)=0$.
\end{remark}

We now fix a line bundle $L\in \pic^d(C)$ and describe the fibers of the truncation maps $\rho^{m}_{m-1}: \pic^d(C)_{m,L}\rightarrow \pic^d(C)_{m-1,L}$ for every $m$. Let $\{U_\alpha\}$ be an affine covering of $C$ which trivializes the line bundle $L$ by isomorphisms $\gamma_\alpha: L|_{U_\alpha}\cong \cO_{U_\alpha}$. Let $\{g_\alb=\gamma_\beta\circ \gamma_\alpha^{-1}\}$ be the corresponding transition functions. For every scheme $U_{\alpha}$ and $i\geq 1$, we have short exact sequence of sheaves on $U_{\alpha}\times T_i$ as follows:
\begin{eqnarray*}
0\rightarrow \mathcal{O}_{U_\alpha}\rightarrow \mathcal{O}_{U_\alpha\times T_i}^*\rightarrow \cO_{U_{\alpha}\times T_{i-1}}^*\rightarrow 0,
\end{eqnarray*}
where the embedding morphism maps $x\in \cO_{U_{\alpha}}$ to $1+x\cdot t^i$.
Since $U_{\alpha}$ is affine, $H^j(\cO_{U_{\alpha}})$ vanishes for every $j\geq 1$. We thus obtain an isomorphism $H^{1}(\cO_{U_{\alpha}\times T_i})\cong H^{1}(\cO_{U_{\alpha}\times T_{i-1}})$. In other words, we have $\pic(U_{\alpha}\times T_{i})\cong \pic(U_{\alpha}\times T_{i-1})$. By induction on $i$ with $0\leq i\leq m$, we deduce that $\{U_{\alpha}\times T_m\}$ is an affine covering of $C\times T_m$ which trivializes every line bundle $\cL_m\in \pic^d(C)_{m,L}$.
In particular, for every line bundle $\cL_1\in \pic^d(C)_{1,L}$ on $C\times T_1$, there is a trivialization for $\cL_1$ on the covering $\{U_\alpha\times T_1\}$ with the transition functions $\{g_\alb(1+t\phi^{(1)}_\alb)\}$.
This gives a bijection $\xi: \pic^d(C)_{1,L}\rightarrow  H^1(C,\cO_C)$ via $\xi(\cL_1)=[\phi^{(1)}_\alb]$.

In general, we fix a family of line bundles $\cL_{m-1}\in \pic^d(C)_{m-1,L}$. After we also fix a point $\mathcal{M}$ in the fiber of $\rho^{m}_{m-1}$ over $\cL_{m-1}$, we get an isomorphism $$(\rho^m_{m-1})^{-1}(\cL_{m-1})\cong H^1(C,\cO_C).$$  Since we will use later the description in terms of  \u{C}ech cohomology classes, we describe this isomorphism as follows. We choose a trivialization  of $\cL_{m-1}$ with the transition functions $g_\alb^{m-1}:=g_\alb(1+t\phi^{(1)}_\alb+\cdots +t^{m-1}\phi^
{(m-1)}_\alb)$. It is easy to see that there is a trivialization for $\mathcal{M}$ with transition functions $g_\alb^m=:g_\alb(1+t
\phi^{(1)}_\alb+\cdots +t^{m-1}\phi^{(m-1)}_\alb+t^{m}\phi^{(m)}_\alb)$.

Every point $\cL_m\in (\rho^m_{m-1})^{-1}(\cL_{m-1})$ has transition functions
$$g_\alb(1+t\phi^{(1)}_\alb+\cdots +t^{m-1}\phi^
{(m-1)}_\alb+t^{m}(\phi^{(m)}_\alb+\psi_{\alb}))$$
where $[\psi_{\alb}]\in H^1(C,\cO_C)$. We thus obtain an isomorphism
$$\xi: (\rho^m_{m-1})^{-1}(\cL_{m-1})\rightarrow H^1(C,\cO_C)$$ given by $\xi(\cL_m)=[\psi_{\alb}]$. Abusing the notation, we write $[\cL_m]$ for the cohomology class corresponding to $\cL_m$. Note, however, that this depends on the choice of $\mathcal{M}$.

Let $s_{m-1} \in H^0(\cL_{m-1})$ be a nonzero section. The obstruction to extending $s_{m-1}$ to a section of $\cL_m$ can be described as follows. We have a short exact sequence of sheaves on $C\times T_m$,
$$0\rightarrow L\rightarrow \cL_m\rightarrow \cL_{m-1}\rightarrow 0.$$
Let $\delta_{\cL_{m}}$ be the connecting map $H^0(\cL_{m-1})\rightarrow H^1(C,L)$.  The long exact sequence on cohomology implies that $s_{m-1}$ can be extended to a section $s_m$ of $\cL_{m}\in (\rho^m_{m-1})^{-1}(\cL_{m-1})$ if and only if $\delta_{\cL_{m}}(s_{m-1})=0$.

With the above notation, we get the following more explicit obstruction to extending a section of $\cL_{m-1}$ in terms of  \u{C}ech cohomology.

\begin{lemma}\label{equationoftransitionfunction}
Fix a line bundle $\mathcal{M}$ in the fiber of $\rho^m_{m-1}$ over $\cL_{m-1}$. For a fixed section $s_{m-1}=(\sum\limits_{j=0}^{m-1}c_{\alpha}^{(j)}t^j)\in H^0(\cL_{m-1})$, let $s_0$ be the its image under $\pi^{m}_{0}: H^0(\cL_{m-1})\rightarrow H^0(L)$.
The section $s_{m-1}$ has an extension to a section of $\cL_m$ if and only if
\begin{equation}\label{commonsol}\tag{$\dagger$}
\nu(s_0\otimes [\cL_m]) {\rm ~is ~the~ cohomology ~class ~corresponding ~to~ } (- \gamma_\alpha^{-1}(\sum\limits_{j=1}^{m}\phi_\alb^{(j)} c^{(m-j)}_\alpha))
\end{equation}
where $\nu$ is the natural pairing $H^0(C,L)\otimes H^1(C,\cO) \rightarrow H^1(C,L)$.
\end{lemma}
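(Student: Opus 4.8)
The plan is to make the connecting map $\delta_{\cL_m}\colon H^0(\cL_{m-1})\to H^1(C,L)$ explicit in terms of the \v{C}ech data already set up, and then to recognize the resulting cocycle as the sum of two terms, one of which is the pairing $\nu(s_0\otimes[\cL_m])$ and the other the cohomology class of $\bigl(-\gamma_\alpha^{-1}\sum_{j=1}^m\phi_\alb^{(j)}c_\alpha^{(m-j)}\bigr)$. First I would fix the trivialization of $\cL_m$ on $\{U_\alpha\times T_m\}$ with transition functions $g_\alb\bigl(1+t\phi_\alb^{(1)}+\cdots+t^{m-1}\phi_\alb^{(m-1)}+t^m(\phi_\alb^{(m)}+\psi_\alb)\bigr)$, and write the section $s_{m-1}\in H^0(\cL_{m-1})$ in these coordinates as $s_{m-1}=\bigl(\sum_{j=0}^{m-1}c_\alpha^{(j)}t^j\bigr)_\alpha$, so that $s_0=(\gamma_\alpha^{-1}c_\alpha^{(0)})_\alpha$ in $H^0(L)$ in the sense of the chosen trivialization of $L$. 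Since $s_{m-1}$ is a genuine section, the local data satisfy the gluing relation on overlaps; the point is to lift each $c_\alpha^{(j)}$ to a local section $\tilde c_\alpha$ of $\cL_m$ over $U_\alpha\times T_m$ (by simply taking the same truncated polynomial, now read mod $t^{m+1}$) and measure the failure of these to glue.

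The core computation is then: on $U_\alpha\cap U_\beta$, the difference $\tilde c_\beta - g_\alb^m\cdot\tilde c_\alpha$ lies in $t^m\cO_{U_\alb}\cong L|_{U_\alb}$ (because the same difference vanishes mod $t^m$, as $s_{m-1}$ was a section of $\cL_{m-1}$), and its coefficient of $t^m$ is exactly the $\delta_{\cL_m}(s_{m-1})$-cocycle. Expanding $g_\alb^m = g_\alb(1+t\phi_\alb^{(1)}+\cdots+t^{m-1}\phi_\alb^{(m-1)}+t^m(\phi_\alb^{(m)}+\psi_\alb))$ and collecting the $t^m$-coefficient of $g_\alb^m\cdot\sum_j c_\alpha^{(j)}t^j$, one gets (after dividing by $g_\alb$, i.e. transporting into $L$ via $\gamma_\alpha$) a sum over $j=1,\dots,m$ of $\phi_\alb^{(j)}c_\alpha^{(m-j)}$ plus the term $\psi_\alb c_\alpha^{(0)}$ coming from the new coordinate $\psi_\alb=[\cL_m]$. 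The term $\psi_\alb c_\alpha^{(0)}$, viewed as a \v{C}ech $1$-cocycle with values in $L$, is by definition the cup product of the cocycle $(\psi_\alb)$ representing $[\cL_m]\in H^1(C,\cO_C)$ with the section $s_0\in H^0(C,L)$, i.e. it represents $\nu(s_0\otimes[\cL_m])\in H^1(C,L)$. Thus $\delta_{\cL_m}(s_{m-1})$ is represented by $\nu(s_0\otimes[\cL_m])$-cocycle plus the cocycle $\bigl(\gamma_\alpha^{-1}\sum_{j=1}^m\phi_\alb^{(j)}c_\alpha^{(m-j)}\bigr)$, and the extension criterion $\delta_{\cL_m}(s_{m-1})=0$ rearranges into exactly \eqref{commonsol}.

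The main obstacle I anticipate is purely bookkeeping: keeping straight the three trivializations in play — of $L$ (via $\gamma_\alpha$, $g_\alb$), of $\cL_{m-1}$ (via the $\phi^{(j)}$ up to $j=m-1$), and of $\cL_m$ (the chosen $\mathcal M$ plus the varying $\psi_\alb$) — and checking that the lift of $s_{m-1}$ to local sections of $\cL_m$ indeed has its obstruction cocycle appearing as the $t^m$-coefficient with no contribution from lower-order terms (this uses that $s_{m-1}$ was already a section, so all coefficients of $t^0,\dots,t^{m-1}$ in the gluing difference vanish). I would also need to be careful about signs: the connecting homomorphism introduces the minus sign that appears in front of the $\sum\phi_\alb^{(j)}c_\alpha^{(m-j)}$ term, which is why it is moved to the other side of the equation in the statement. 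Once the $t^m$-coefficient is correctly isolated and the cup-product term identified with $\nu(s_0\otimes[\cL_m])$, the equivalence with the long-exact-sequence criterion $\delta_{\cL_m}(s_{m-1})=0$ is immediate.
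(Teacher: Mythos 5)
Your proposal is correct and follows essentially the same route as the paper: both expand the product $g_\alb^m\cdot\sum_j c_\alpha^{(j)}t^j$ on overlaps, isolate the $t^m$-coefficient, identify the term $\psi_\alb c_\alpha^{(0)}$ with the cup product $\nu(s_0\otimes[\cL_m])$, and absorb the top coefficient $c_\alpha^{(m)}$ into a coboundary. The only difference is organizational — the paper assumes an extension $s_m$ exists and derives the relation (then reverses the argument), whereas you compute the obstruction cocycle of $\delta_{\cL_m}$ directly and set it to zero — but this is the same computation read in two directions.
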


\begin{proof} Assume there is an extension of $s_{m-1}\in H^0(\cL_{m-1})$ to a section $s_m\in H^0(\cL_m)$.
Locally $s_{m-1}$ is given by functions $\sum\limits_{j=0}^{m-1}c^{(j)}_\alpha t^j\in \Gamma(U_\alpha\times T_{m-1}, \cO_{U_\alpha\times T_{m-1}})$. We write $s_m$ as $\sum\limits_{j=0}^{m-1}c^{(j)}_\alpha t^j+c^{(m)}_\alpha t^m$. Let $\gamma^{m}_{\alpha}: \cL_{m}|{_{U_\alpha\times T_m}}\rightarrow \cO_{U_\alpha\times T_m}$ be a trivialization of $\cL_m$ on $U_{\alpha}\times T_m$. We thus have the following equality on $(U_\alpha\cap U_\beta)\times T_m$:
$$(\gamma^{m}_{\alpha})^{-1}(\sum\limits_{j=0}^{m}c^{(j)}_\alpha t^j)=(\gamma^{m}_{\beta})^{-1}(\sum\limits_{j=0}^{m}c^{(j)}_\beta t^j).$$
Since $(\gamma^{m}_{\alpha})^{-1}=(\gamma^{m}_{\beta})^{-1}\circ g^{m}_\alb$, we have
$(\gamma^{m}_{\beta})^{-1}
 \circ g^{m}_\alb(\sum\limits_{j=0}^{m}c^{(j)}_\alpha t^j)=(\gamma^{m}_{\beta})^{-1}(\sum\limits_{j=0}^{m}c^{(j)}_\beta t^j).$
More explicitly, we obtain
$$g_\alb(1+t
\phi^{(1)}_\alb+\cdots+t^{m-1}\phi^{(m-1)}_\alb+t^{m}(\phi^{(m)}_\alb+\psi^{(m)}_\alb))(\sum\limits_{j=0}^{m}c^{(j)}_\alpha t^j)=(\sum\limits_{j=0}^{m}c^{(j)}_\beta t^j)$$
in $\cO((U_{\alpha}\cap U_{\beta})\times T_m)$.
We now expend this equation and take the coefficient of $t^i$ for $i$ with $0\leq i\leq m$. If $i<m$, the equation we obtain from the coefficient of $t^i$ always holds since $s_{m-1}$ is a section of $\cL_{m-1}$. For $i=m$, we obtain
$$g_\alb(\psi_\alb\cdot c^{(0)}_\alpha+\sum\limits_{j=1}^m \phi_\alb^{(j)}\cdot c_\alpha^{(m-j)}+c_\alpha^{(m)})=(c_\beta^{(m)})$$
in $\mathcal{O}((U_{\alpha}\cap U_\beta)\times T)$. Note that the restriction to the  trivialization $\gamma^m_\alpha$ to the subsheaf $L$ of $\cL_m$ is exactly the trivialization $\gamma_\alpha$, we have $$(\gamma_{\beta})^{-1}\circ g_\alb(\psi_\alb\cdot c^{(0)}_\alpha+\sum\limits_{j=1}^m \phi_\alb^{(j)}\cdot c_\alpha^{(m-j)}+c_\alpha^{(m)})= (\gamma_{\beta})^{-1}(c_\beta^{(m)})$$
as sections of $L$ on $(U_{\alpha}\cap U_\beta)$.

Clearly $(\gamma_{\beta})^{-1}\circ g_\alb (c_\alpha^{(m)})-(\gamma_{\beta})^{-1}(c_\beta^{(m)})$ gives the zero cohomology class in $H^1(C,L)$. We obtain that $\nu(s_0\otimes [\cL_m])$, the cohomology class corresponding to $(\gamma_{\alpha}^{-1}(\psi_\alb\cdot c^{(0)}_\alpha))$ is equal to the cohomology class corresponding to $(- \gamma_{\alpha}^{-1}(\sum\limits_{j=1}^{m}\phi_\alb^{(j)} c^{(m-j)}_\alpha))$. By reversing the argument, we also obtain the converse.
\end{proof}

\begin{remark}
The identification between the fiber of $\pic^d(C)_{m,L}\rightarrow \pic^d(C)_
{m-1,L}$ and $H^1(C,\cO_C)$ is not canonical. In particular, the expression for $\gamma(s_0\otimes[\cL_m])$ in (\ref{commonsol}) does depend on $\mathcal{M}$.  However, for any fixed nonzero section $s_{m-1}$, the dimension of the subset $$\{\cL_m\in (\rho^m_{m-1})^{-1}(\cL_{m-1})~|~ H^0(\cL_m)\rightarrow H^0(\cL_{m-1}) {\rm ~has ~nonempty~fiber ~over~ } s_{m-1}\} $$
is independent of $\mathcal{M}$.
\end{remark}

We now prove Theorem \ref{multiplicity thm}. The idea is similar to that in Kempf's proof of Riemann's multiplicity formula.

\noindent{\it Proof} of Theorem \ref{multiplicity thm}.
For every effective Cartier divisor $D$ on a smooth variety $X$ and a point $x\in D$, the multiplicity of $D$ at $x$ is equal to the minimal positive integer $m$ such that $D_{m,x}$ is a proper subset of $X_{m,x}$.

Let $L\in \Theta$ be a line bundle with $l=h^0(L)$. We first show that $\Theta_{m,L}=\pic^{g-1}(C)_{m,L}$ for every $m<l$. This follows from the description of $\Theta$ as a determinantal variety. Indeed, let $\cL_m\in \pic^{g-1}(C)_{m,L}$ be a line bundle of type $\lambda\in \Lambda_{l,m}$, then $\sum\limits_{i=1}^{l}\lambda_i\geq l>m$. By Lemma \ref{condition}, we have $\cL_m\in \Theta_{m,L}$. Hence $\Theta_{m,L}=\pic^{g-1}(C)_{m,L}$ for every $m<l$.

We now show that $\Theta_{m,L}\neq \pic^{g-1}(C)_{m,L}$ for $m=l$. Let $\mathcal{Z}_1$ be the image of $\Theta_{m,L}$ under $\pic^{g-1}(C)_m\rightarrow \pic^{g-1}(C)_1$. It suffices to show that $\mathcal{Z}_1\neq \pic^{g-1}(C)_{1,L}$. For every $\cL_m\in \Theta_{m,L}$ of type $\lambda=(1\leq\lambda_1\leq\cdots\leq\lambda_l)$, Lemma \ref{condition} implies that $\sum\limits_{i=1}^{l}\lambda_i \geq m+1$.  Hence $\lambda_l\geq 2$ and $n_2(\lambda)\geq 1$. By Lemma \ref{dimension lemma}, we have $h^0(\cL_1)-h^0(L)=n_2(\lambda)\geq 1$. By Remark \ref{image}, we see that the map $\pi^1_0: H^0(\cL_1)\rightarrow H^0(L)$ is not zero. Equivalently, there is a nonzero section $s_0 \in H^0(L)$ which can be extended to a section of $H^0(\cL_1)$. Let $\mathcal{Z}_2$ be the subset
$$\{\cL_1 \in \pic^{g-1}(C)_1~|~\pi^1_0: H^0(\cL_1)\rightarrow H^0(L) {\rm ~is ~not ~zero}\}.$$
 We have seen that $\mathcal{Z}_1$ is a subset of $\mathcal{Z}_2$, hence it is enough to show that $\mathcal{Z}_2\neq \pic^{g-1}(C)_{1,L}$.

We now apply Lemma \ref{equationoftransitionfunction} with $m=1$. Let $\mathcal{M}$ be the trivial deformation of $L$, i.e. $\mathcal{M}$ represents the zero tangent vector at $L$. To compute the dimension of $\mathcal{Z}_2$, we consider the proper subset
$$\mathcal{Z}=\{(W, \cL_1) ~|~\nu(s_0\otimes [\cL_1])=0 {\rm~ for~ every~} s_0\in W \}$$
of $\PP (H^0(C,L))\times H^1(C,\cO_C)$. Here $\PP (H^0(C,L))$ stands for the projective space of one dimensional subspaces of $H^0(C,L)$. Let $W$ be an element in $\PP (H^0(C,L))$ and $s_0$ a nonzero element of $W$. The induced map $H^1(C,\cO_C)\rightarrow H^1(C,L)$ taking $[\cL_1]$ to $\nu(s_0\otimes [\cL_1])$ is surjective. Hence each fiber of the first projection map $\mathcal Z \rightarrow \PP (H^0(C,L))$ is a codimension $l$ vector space of $H^1(C,\cO_C)$. We obtain $\dim\mathcal Z=g-1$. Since $\mathcal{Z}_2$ is a subset of the image of the second projection map $\mathcal{Z}\rightarrow H^1(C,\cO_C)$, we obtain $\dim\mathcal{Z}_2\leq g-1$. Hence $\mathcal{Z}_2\neq \pic^{g-1}(C)_{1,L}$. This completes the proof.
\hfill{$\Box$}

For smooth projective curves of genus $g\leq 2$, Riemann's Singularity Theorem implies that the theta divisor is smooth. We consider the singularities of the theta divisor for curves of genus $g\geq 3$ in the next section.

\section{Singularities of the Theta divisor and of the $W^r_d$ loci}

Our first goal in this section is to give an upper bound for $\dim W^r_d(C)_{m,L}$ for each $L\in \pic^d(C)$ and $m\geq 0$. We fix a line bundle $L$ of degree $d$ with $l=h^0(L)$.

For every partition $\lambda\in\Lambda_{l,m+1}$, we denote by $C_{\lambda,m}$ the subset $$\{\cL_m\in \pic^d(C)_{m,L}~|~\cL_m \text{ is of type } \lambda\}.$$ It is easy to see that
 locally $C_{\lambda,m}$ is the pull back of a locally closed subset of the $m$-th jet scheme of the variety of $(d+e+1-g)\times e$ matrices. Therefore $C_{\lambda,m}$ is a constructible subset of $\pic^d(C)_{m,L}$. By Lemma \ref{conditionw}, we have $W^r_d(C)_{m,L}=
\bigcup\limits_{\lambda} C_{\lambda,m}$, where $\lambda$ varies over the partitions in  $\Lambda_{l,m+1}$ satisfying  $\sum\limits_{i=1}^{l-r }{\lambda_i}\geq m+1$. In particular, we have a finite union $\Theta_{m,L}=\bigcup\limits_{\lambda}C_{\lambda,m}$, where $\lambda$ varies over all elements in $\Lambda_{l,m+1}$ with $\sum\limits_{i=1}^{l}{\lambda_i}\geq m+1$. In order to estimate the dimension of $\Theta_{m,L}$, it is enough to bound the dimension of $C_{\lambda,m}$ for every $\lambda\in \Lambda_{l,m+1}$. The idea is to describe the image of $C_{\lambda,m}$ under the truncation map $\rho^m_i: \pic^d(C)_{m,L}\rightarrow \pic^d(C)_{i,L}$ for every $i\leq m$.

\begin{definition}
{\it A weak flag of} $H^0(C,L)$ {\it of signature} $\kappa=(\kappa_i)$ with $\kappa_1\geq \cdots \geq\kappa_n $ is a sequence of subspaces of $H^0(C,L)$,
$$\textbf{V}: ~H^0(C,L)=V_0\supseteq V_1\supseteq\cdots \supseteq V_{n-1}\supseteq V_n$$
such that $\dim V_i=\kappa_i$ for every $1\leq i\leq n$.  Here $n$ is called the {\it length} of the weak flag $\textbf{V}$.
Given a weak flag $\textbf{V}$ of $H^0(C,L)$ of length $n$, for every $i\leq n$ we denote by $\textbf{V}_{(i)}$ the truncated weak flag of length $i$:
$$\textbf{V}_{(i)}: H^0(C,L)=V_0\supseteq V_1\supseteq\cdots \supseteq V_{i-1}\supseteq V_i.$$
\end{definition}

For every $\cL_m\in C_{\lambda,m}$ and every $j$ with $0\leq j\leq m$, we denote by $\cL_j$ the image of $\cL_m$ under $\rho^m_j: \pic^d(C)_m\rightarrow \pic^d(C)_j$. Lemma \ref{dimension lemma} implies that the function $C_{\lambda,m}\rightarrow \ZZ$ which takes $\cL_m$ to $h^0(\cL_j)=\sum\limits_{k=1}^{j+1} n_k(\lambda)$ is constant. For a fixed $\cL_m\in C_{\lambda,m}$, the images $V_j$ of the morphisms $\pi^j_0: H^0(\cL_j)\rightarrow H^0(L)$ give a weak flag $\textbf{V}_{\cL_m}$ of $H^0(L)$ of length $m$.  Remark \ref{image} implies that $\dim V_j=\dim H^0(\cL_{j})-\dim H^0(\cL_{j-1})=n_{j+1}(\lambda)$. Hence the signature $\kappa$ of the weak flag $\textbf{V}_{\cL_m}$, with $\kappa_j=n_{j+1}(\lambda)$, only depends on the partition $\lambda$.

Lemma \ref{filtration lemma} shows that there is a short exact sequence $$0\rightarrow H^0(\cL_{m-1})\xrightarrow{v^m_{m-1}} H^0(\cL_{m})\twoheadrightarrow V_m\rightarrow 0.$$
We now choose a splitting of this short exact sequence, which gives a decomposition
$H^0(\cL_{m})=H^0(\cL_{m-1})\oplus \widetilde{V}_m\,,$ with $\widetilde{V}_m$ mapping isomorphically onto $V_m$. The restriction map
$\pi^m_{m-1}: H^0(\cL_{m})\rightarrow H^0(\cL_{m-1})$ maps $\widetilde{V}_m$ isomorphic to its image. For the short exact sequence $$0\rightarrow H^0(\cL_{m-2})\xrightarrow{v^{m-1}_{m-2}} H^0(\cL_{m-1})\twoheadrightarrow V_{m-1}\rightarrow 0,$$
we can choose a splitting $H^0(\cL_{m-1})=H^0(\cL_{m-2})\oplus \widetilde{V}_{m-1}$ such that the restriction map $\pi^m_{m-1}$ maps $\widetilde{V}_m$ into $\widetilde{V}_{m-1}$. By descending induction on $i$ with $0\leq i\leq m$, we can find a subspace $\widetilde{V_i}\subset H^0(\cL_i)$ for each $i$
such that
\begin{enumerate}
  \item The restriction of the truncation map $\pi^i_0: H^0(\cL_{i})\rightarrow H^0(L)$ to $\widetilde{V}_i$ induces an isomorphism onto $V_i$.
  \item The truncation map $\pi^i_{i-1}: H^0(\cL_{i})\rightarrow H^0(\cL_{i-1})$ takes $\widetilde{V}_i$ into $\widetilde{V}_{i-1}$
\end{enumerate}

\begin{definition}
A weak flag $\textbf{V}$ of $H^0(C,L)$ of length $m$ is {\it extended compatibly} to the line bundle $\cL_m$ if there
are linear subspaces $\widetilde{V}_i\subset H^0(C,\cL_i)$ for each $i\leq m$ such that (1) and (2) above hold.
\end{definition}
In this case, the set of linear subspaces $\{\widetilde{V}_i\}_i$ as above is called a {\it compatible extension} of $\textbf{V}$ to line bundle $\cL_m$.
The above argument shows that every weak flag $\textbf{V}_{\cL_m}$ associated to a line bundle $\cL_m$ can be extended compatibly to the line bundle $\cL_m$.

 For every $i$ with $1\leq i\leq m$, recall that $\overline{\lambda}$ is the image of $\lambda\in \Lambda_{l,m+1}$ under the map $\Lambda_{l,m+1}\rightarrow \Lambda_{l,i+1}$.  Given a weak flag $\textbf{V}$ of $H^0(L)$ of length $m$, we denote by $S_{i,\textbf{V}}^{\lambda}$ the set of line bundles $\cL_i\in \pic^d(C)_{i,L}$ such that $\cL_i\in C_{i,\overline{\lambda}}$ and $\textbf{V}_{(i)}$ can be extended compatibly to $\cL_i$. For a fixed non-increasing sequence $\kappa$, we define $S_{i,\kappa}^{\lambda}=\bigcup\limits_{\textbf{V}'}S_{i,\textbf{V}'}^{\lambda}$, where $\textbf{V}'$ varies over all weak flags of $H^0(L)$ of signature $\kappa$. For convenience, we set $S_{0,\textbf{V}}^{\lambda}=S_{0,\kappa}^{\lambda}=\{L\}$.

Standard arguments show that $S_{i,\textbf{V}}^{\lambda}$ and $S_{i,\kappa}^{\lambda}$ are constructible subsets of $\pic^d(C)_{i,L}$. For the benefit of the reader, we give the details in the appendix. The truncation map $\rho^m_i: \pic^d(C)_{m,L}\rightarrow \pic^d(C)_{i,L}$ maps $C_{\lambda,m}$ to the set $S_{i,\kappa}^{\lambda}$ with $\kappa_j=n_{j+1}(\lambda)$. In order to estimate the dimension of $C_{\lambda,m}$, we only need to estimate $\dim S_{i,\kappa}^{\lambda}$ for a suitable $i\leq m$.

\begin{definition}
For a fixed weak flag $\textbf{V}$ of $H^0(L)$ of length $m$, for every $i$ and $j$ with $1\leq i\leq j\leq m$,  we define $\widetilde{S}_{i,j,\textbf{V}}^{\lambda}$ to be the set of pairs $(\cL_i, W)$ such that
\begin{enumerate}
  \item $\cL_i\in S_{i,\textbf{V}}^{\lambda}$ and $W$ is a subspace of $H^0(\cL_i)$ of dimension $\kappa_j$.
  \item There is  a compatible extension $\{\widetilde{V}_l\}_{l\leq i}$ of $\textbf{V}_{(i)}$ to $\cL_i$ such that $W$ is the inverse image of $V_j$ in $H^0(\cL_i)$ under the isomorphism $\widetilde{V}_i\rightarrow V_i$.
\end{enumerate}
\end{definition}

We call the $W$ in a pair $(\cL_i, W)$ as above a lifting of $V_j$ to $\cL_i$. For any element $s\in V_j$, the preimage of $s$ via the isomorphism $W\rightarrow V_j$ is called a lifting of $s$ to the level $i$.

In the appendix we also show that $\widetilde{S}_{i,j,\textbf{V}}^{\lambda}$ is a constructible subset of a suitable Grassmann bundle. For the convenience, we set $\widetilde{S}_{0,j,\textbf{V}}^{\lambda}=\{(L,V_j)\}$ for every $\lambda$.

\begin{lemma}\label{dimofbundles}
Let $X_1$ and $Y_1$ be constructible subsets of algebraic varieties $X$ and $Y$ respectively. Let $f: X_1\rightarrow Y_1$ be the restriction of a morphism $g: X\rightarrow Y$. If all the fibers of $f$ are of dimension $d\geq 0$, then $\dim X_1=\dim Y_1+d$.
\end{lemma}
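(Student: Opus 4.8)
The plan is to deduce the equality from the classical theorem on the dimension of the fibres of a dominant morphism of irreducible varieties, together with two standard facts about constructible sets: first, a constructible subset $S$ of a variety satisfies $\dim S=\dim\overline{S}$ and can be written as a finite disjoint union of irreducible locally closed subvarieties; second, images (Chevalley) and preimages of constructible sets under a morphism are again constructible. Since $d\ge 0$, every fibre of $f$ is non-empty, so $f$ is surjective onto $Y_1$. I will prove the two inequalities $\dim X_1\le \dim Y_1+d$ and $\dim X_1\ge \dim Y_1+d$ separately; the first will need no irreducibility assumption, the second will be reduced to the case of an irreducible target.

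For the upper bound, write $X_1=\bigsqcup_j W_j$ with each $W_j$ an irreducible locally closed subvariety of $X$, so $\dim X_1=\max_j\dim W_j$. Fix $j$ and set $Z_j=\overline{g(W_j)}$; as $g(W_j)\subseteq Y_1$ is a dense constructible subset of the irreducible variety $Z_j$, it contains a dense open subset, so the theorem on dimension of fibres applied to the dominant morphism $W_j\to Z_j$ gives a point $y\in g(W_j)$ with $\dim\bigl(W_j\cap g^{-1}(y)\bigr)=\dim W_j-\dim Z_j$. Since $y\in g(W_j)\subseteq Y_1$, the fibre $f^{-1}(y)=g^{-1}(y)\cap X_1$ has dimension $d$ and contains $W_j\cap g^{-1}(y)$, whence $\dim W_j-\dim Z_j\le d$; together with $\dim Z_j=\dim g(W_j)\le\dim Y_1$ this yields $\dim W_j\le\dim Y_1+d$, and taking the maximum over $j$ proves the upper bound.

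For the lower bound I first assume $Y_1$ irreducible; the general case follows by decomposing $Y_1=\bigsqcup_k Y_1^{(k)}$ into irreducible locally closed pieces, applying the irreducible case to each restriction $f^{-1}\bigl(Y_1^{(k)}\bigr)\to Y_1^{(k)}$ (whose fibres over points of $Y_1^{(k)}$ coincide with those of $f$, hence have dimension $d$), and taking the maximum. So suppose $Y_1$ is irreducible, write again $X_1=\bigsqcup_j W_j$ and set $Z_j=\overline{g(W_j)}$. Since $\bigcup_j g(W_j)=f(X_1)=Y_1$ is dense in the irreducible variety $\overline{Y_1}$, at least one $Z_j$ equals $\overline{Y_1}$; let $J$ be the set of such indices. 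For $j\in J$ the morphism $W_j\to\overline{Y_1}$ is dominant, so there is a dense open $U_j\subseteq\overline{Y_1}$, contained in $g(W_j)$, over which $W_j$ has fibres of constant dimension $c_j:=\dim W_j-\dim Y_1\ge 0$; for $j\notin J$ the set $Z_j\subsetneq\overline{Y_1}$ is a proper closed subvariety. Pick $y$ in the set $\bigl(\bigcap_{j\in J}U_j\bigr)\cap\bigl(\overline{Y_1}\setminus\bigcup_{j\notin J}Z_j\bigr)\cap Y_1^{\circ}$, where $Y_1^{\circ}$ is a dense open of $\overline{Y_1}$ contained in $Y_1$; this set is a finite intersection of dense opens of the irreducible $\overline{Y_1}$, hence non-empty. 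Then $W_j\cap g^{-1}(y)=\emptyset$ for $j\notin J$ and $\dim\bigl(W_j\cap g^{-1}(y)\bigr)=c_j$ for $j\in J$, so the disjoint decomposition $f^{-1}(y)=\bigsqcup_j\bigl(W_j\cap g^{-1}(y)\bigr)$ and $y\in Y_1$ force $\max_{j\in J}c_j=d$. Choosing $j^{*}\in J$ with $c_{j^{*}}=d$ gives $\dim W_{j^{*}}=\dim Y_1+d$, hence $\dim X_1\ge\dim Y_1+d$.

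Combining the two inequalities gives $\dim X_1=\dim Y_1+d$. The one delicate point is the lower bound: a priori the top fibre dimension $d$ could be contributed by several small strata of $X_1$ none of which dominates $Y_1$, and the role of the general point $y$ chosen above is precisely to discard the strata that do not dominate $Y_1$ and to isolate a single stratum $W_{j^{*}}$ dominating $Y_1$ with relative dimension exactly $d$.
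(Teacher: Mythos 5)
Your proof is correct and follows essentially the same route as the paper: stratify $X_1$ (and $Y_1$) into irreducible locally closed pieces, get the upper bound from the fibre-dimension theorem applied stratum by stratum, and get the lower bound by reducing to an irreducible target and picking a general point of $Y_1$ so that only the strata dominating $\overline{Y_1}$ contribute to the fibre, one of which must then have relative dimension exactly $d$. Your write-up is somewhat more careful than the paper's (e.g.\ in justifying why a general point avoids the non-dominating strata), but the underlying argument is the same.
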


\begin{proof} Since $Y_1$ is a constructible subset of $Y$, we write $Y_1$ as a finite disjoint union of locally closed subset $V_k$ of $Y$. We may assume that all subsets $V_k$ are irreducible. For every $k$, the inverse image $f^{-1}(V_k)$, as the intersection of $g^{-1}(V_k)$ with $X_1$, is a constructible subset of $X$. We thus have $\dim Y_1=\max\limits_{k}\{\dim V_k\}$ and $\dim X_1=\max\limits_{k}\{\dim f^{-1}(V_k)\}$. Hence it is enough to show the statement for the map $f^{-1}(V_k)\rightarrow V_k$ for every $k$. We may thus assume that $Y_1$ is an irreducible algebraic variety.

Consider the stratification $X_1=\coprod\limits_{l=1}^m(W_l)$, where each $W_l$ is a locally closed subset of $X$. For every $l$, the morphism $W_l\rightarrow Y_1$ has fibers of dimension $\leq d$. We get $$\dim W_l\leq \dim f(W_l)+d\leq \dim Y_1+d.$$ This implies that $\dim X_1=\max\limits_{l}\{\dim W_l\}\leq \dim Y_1+d$.

We now prove the other direction of the inequality. Let $\{W_l\}_{l=1,\ldots, m_0}$ be the collection of those $W_l$ that dominates $Y_1$. (This collection is nonempty since otherwise $f^{-1}(y)$ would be empty for a general point $y\in Y_1$.) We choose an open subset $V\subset Y_1$ such that $\dim (W_l\cap f^{-1}(y))$ is constant for $y\in V$ and $l\leq m_0$. There is a subset $W_l$ with $l\leq m_0$ such that $\dim(f^{-1}(y)\cap W_l)=\dim f^{-1}(y)=d$. We obtain that $\dim W_l=d+\dim Y_1$. We thus have
$$\dim X\geq \dim W_1=\dim Y_1+d.$$ This completes the proof.
\end{proof}

\begin{lemma}\label{dimension S}
For a fixed $L\in \pic^d(C)$ with $l=h^0(C,L)$ and a partition $\lambda\in \Lambda_{l,m+1}$, let $\kappa=(\kappa_1,\kappa_2,\ldots,\kappa_m)$ be a signature of length $m$,  with $\kappa_{j}\leq n_{j+1}(\lambda)$ for every $j\leq m$, and $\textbf{V}$ a weak flag of $H^0(L)$ of signature $\kappa$. For every $i$ with $1\leq i\leq m$,  we write $d_i$ for the dimension of the kernel of
$$\mu_{V_i}: V_i\otimes H^0(C,K\otimes L^{-1})\rightarrow H^0(C,K).$$
Then the the following holds:
\begin{enumerate}
  \item $\dim S_{i,\textbf{V}}^{\lambda}-\dim S_{i-1,\textbf{V}}^{\lambda}\leq g+d_i-\kappa_i\cdot (g-d-1+n_i(\lambda))$,
  \item $\dim S_{i,\textbf{V}}^{\lambda}\leq gi-\sum\limits_{j=1}^{i}\{(\kappa_j\cdot(g-d-1+n_{j}(\lambda))-d_j\}$.
\end{enumerate}
\end{lemma}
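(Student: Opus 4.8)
The plan is to prove (1) first, by analyzing the fiber of the truncation map $\rho^i_{i-1}$ restricted to $S^{\lambda}_{i,\textbf{V}}$, and then deduce (2) by induction on $i$ using the base case $S^{\lambda}_{0,\textbf{V}}=\{L\}$.

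For step (1), I would introduce the intermediate constructible set $\widetilde{S}^{\lambda}_{i-1,i,\textbf{V}}$ of pairs $(\cL_{i-1},W)$, where $W\subset H^0(\cL_{i-1})$ is a lifting of $V_i$ of dimension $\kappa_i$, and factor the map $S^{\lambda}_{i,\textbf{V}}\to S^{\lambda}_{i-1,\textbf{V}}$ through it: send $\cL_i$ to the pair $(\cL_{i-1},W)$ where $W$ is the image in $H^0(\cL_{i-1})$ of a lifting of $V_i$ inside $\widetilde V_i\subset H^0(\cL_i)$ (this is well-defined on $S^{\lambda}_{i,\textbf{V}}$ because a compatible extension of $\textbf{V}_{(i)}$ exists, and the image of $\widetilde V_i$ under $\pi^i_{i-1}$ only depends on $\cL_i$ up to the choice). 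By Lemma \ref{dimofbundles} it then suffices to bound (a) the dimension of the fibers of $\widetilde{S}^{\lambda}_{i-1,i,\textbf{V}}\to S^{\lambda}_{i-1,\textbf{V}}$ — these are subsets of a Grassmannian of $\kappa_i$-planes in a space of dimension $n_i(\lambda)=h^0(\cL_{i-1})-h^0(\cL_{i-2})$ worth of relevant liftings, giving a contribution bounded by $\kappa_i(n_i(\lambda)-\kappa_i)$, though I expect the clean bookkeeping to reorganize this — and (b) the dimension of the fibers of $S^{\lambda}_{i,\textbf{V}}\to\widetilde{S}^{\lambda}_{i-1,i,\textbf{V}}$. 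For (b), the fiber over $(\cL_{i-1},W)$ sits inside $(\rho^i_{i-1})^{-1}(\cL_{i-1})\cong H^1(C,\cO_C)$ (of dimension $g$), cut out by the conditions from Lemma \ref{equationoftransitionfunction}: each of the $\kappa_i$ basis sections of $W$ must lift to level $i$, and each imposes the inhomogeneous linear condition $(\dagger)$. The associated homogeneous system has solution space of codimension equal to the rank of the map $[\cL_i]\mapsto\bigl(\nu(s_0\otimes[\cL_i])\bigr)_{s_0}$ from $H^1(C,\cO_C)$ to $\bigoplus H^1(C,L)$; by Serre duality and the base-point-free pencil / general position argument this rank is $\dim V_i\cdot h^1(L)-d_i=\kappa_i(g-d-1+l)-d_i$ — wait, more precisely $h^1(L)=g-d-1+l$, and subtracting the kernel dimension $d_i$ of $\mu_{V_i}$ via the cohomology pairing identification $H^0(C,L)\otimes H^1(C,\cO_C)\to H^1(C,L)$ dualizing to $\mu_{V_i}$. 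Combining (a) and (b) and simplifying (using $n_i(\lambda)=\kappa_{i-1}$ in the reindexing, and that the Grassmannian contribution recombines with the affine-condition contribution) yields exactly the bound $g+d_i-\kappa_i(g-d-1+n_i(\lambda))$ in (1).

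For step (2), I would argue by induction on $i$. The case $i=0$ is trivial since $S^{\lambda}_{0,\textbf{V}}=\{L\}$ has dimension $0$. Assuming (2) for $i-1$, add the inequality from (1):
$$\dim S^{\lambda}_{i,\textbf{V}}\leq \dim S^{\lambda}_{i-1,\textbf{V}}+g+d_i-\kappa_i(g-d-1+n_i(\lambda))\leq g(i-1)-\sum_{j=1}^{i-1}\bigl(\kappa_j(g-d-1+n_j(\lambda))-d_j\bigr)+g+d_i-\kappa_i(g-d-1+n_i(\lambda)),$$
which telescopes to the claimed $gi-\sum_{j=1}^{i}\bigl(\kappa_j(g-d-1+n_j(\lambda))-d_j\bigr)$.

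The main obstacle is step (1)(b): correctly identifying the codimension of the fiber inside $H^1(C,\cO_C)\cong k^g$. One must check that requiring a $\kappa_i$-dimensional family of sections to lift simultaneously (not just one at a time) imposes conditions whose rank is controlled by the corank $d_i$ of the multiplication map $\mu_{V_i}\colon V_i\otimes H^0(K\otimes L^{-1})\to H^0(K)$, via Serre duality dualizing the cup-product pairing $H^0(C,L)\otimes H^1(C,\cO_C)\to H^1(C,L)$. A subtlety is that the inhomogeneous systems $(\dagger)$ for different sections may or may not be consistent, and the identification of $\widetilde{S}^{\lambda}_{i-1,i,\textbf{V}}$ forces the consistency (that is precisely what it means for $\textbf{V}_{(i)}$ to admit a compatible extension), so the fiber is either empty or a torsor under the homogeneous solution space — and one only needs the upper bound, so the possibly-empty case is harmless. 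I would also need to verify the edge cases where $\kappa_i=0$ (the fiber is all of $H^1(C,\cO_C)$, matching $g+d_i-0=g$ since then $d_i=0$) to be sure the formula degenerates correctly.
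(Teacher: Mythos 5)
Your key analytic input is the same as the paper's: by Lemma \ref{equationoftransitionfunction} and Serre duality, the set of classes $[\cL_i]\in(\rho^i_{i-1})^{-1}(\cL_{i-1})\cong H^1(C,\cO_C)$ for which every section of $W$ extends is, when nonempty, an affine translate of the annihilator of the image of $\mu_{V_i}$, hence of dimension $g-(\kappa_i(g-d-1+l)-d_i)$; and part (2) is indeed the telescoping induction from (1). The gap is in how you reduce (1) to this computation. The assignment $\cL_i\mapsto(\cL_{i-1},W)$ is not a morphism: $W=\pi^i_{i-1}(\widetilde V_i)$ genuinely depends on the choice of compatible extension, since two liftings $W'$ of $V_i$ inside $H^0(\cL_i)$ differ by elements of $v^i_{i-1}(H^0(\cL_{i-1}))$, whose images under $\pi^i_{i-1}$ are nonzero in general. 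So you cannot apply Lemma \ref{dimofbundles} to a two-step factorization $S^{\lambda}_{i,\textbf{V}}\to\widetilde S^{\lambda}_{i-1,i,\textbf{V}}\to S^{\lambda}_{i-1,\textbf{V}}$. The paper completes your picture to a commutative square by introducing the fourth object $\widetilde S^{\lambda}_{i,i,\textbf{V}}$ (pairs $(\cL_i,W')$ with $W'\subset H^0(\cL_i)$ a lifting of $V_i$) in the upper-left corner, with honest maps $\rho_1$ down to $S^{\lambda}_{i,\textbf{V}}$ and $h$ across to $\widetilde S^{\lambda}_{i-1,i,\textbf{V}}$, and compares dimensions around the square.

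Your fiber count in (a) is also not correct, and the discrepancy does not reorganize away. The fiber of $\widetilde S^{\lambda}_{i-1,i,\textbf{V}}\to S^{\lambda}_{i-1,\textbf{V}}$ is not (a subset of) a Grassmannian of $\kappa_i$-planes in an $n_i(\lambda)$-dimensional space: a lifting $W$ of $V_i$ to $H^0(\cL_{i-1})$ is determined by preimages of a basis of $V_i$, and by Lemma \ref{filtration lemma} two such preimages differ by an element of $\ker\pi^{i-1}_0\cong H^0(\cL_{i-2})$, so the fiber has dimension $\kappa_i\cdot h^0(\cL_{i-2})=\kappa_i\sum_{j=1}^{i-1}n_j(\lambda)$. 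In the paper's square this large term cancels against the fiber dimension $\kappa_i\cdot h^0(\cL_{i-1})$ of $\rho_1$, and together with the extra $\kappa_i\cdot l$ from the liftings of $W$ inside a fixed $\cL_i$, the net contribution is exactly the term $-\kappa_i\, n_i(\lambda)$ appearing in (1). With your two bounds as literally stated, the sum is $g+d_i-\kappa_i(g-d-1+l)+\kappa_i(n_i(\lambda)-\kappa_i)$, which exceeds the claimed bound whenever $2n_i(\lambda)>l+\kappa_i$ (for instance $n_i(\lambda)=l$ and $\kappa_i<l$), so the missing fourth object and the corrected fiber dimensions are essential, not cosmetic. (Also, the reindexing ``$n_i(\lambda)=\kappa_{i-1}$'' is not available here, since the lemma only assumes $\kappa_j\le n_{j+1}(\lambda)$.)
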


\begin{proof} Since we fix the partition $\lambda$, we may and will omit the superscript $\lambda$ in the proof.
We apply Lemma \ref{equationoftransitionfunction} to compute the dimension of $S_{i,\textbf{V}}$ inductively on $i$. $S_{0,\textbf{V}}=\{L\}$ implies that $\dim S_{0,\textbf{V}}=0$. Consider the following commutative diagram:
\begin{equation*}
\begin{array}[c]{ccc}
\widetilde{S}_{i,i,\textbf{V}}&\stackrel{h}{\rightarrow}&\widetilde{S}_{i-1,i,\textbf{V}}\\
\downarrow\scriptstyle{\rho_1}&&\downarrow\scriptstyle{\rho_2}\\
S_{i,\textbf{V}}&{\rightarrow}&S_{i-1,\textbf{V}}
\end{array}
\end{equation*}

The horizontal map $h$ maps $(\cL_i,W')$ to $(\cL_{i-1}, W)$, where $W$ is the image of $W'$ under the truncation map $\pi^i_{i-1}: H^0(C, \cL_i)
\rightarrow H^0(C, \cL_{i-1})$.  The vertical map $\rho_1$ is given by mapping $(\cL_i,W')$ to $\cL_i$ and $\rho_2$ is defined similarly.

Let $\cL_i$ be a fixed point in $S_{i,\textbf{V}}$. The fiber of $\rho_1$ over the point $\cL_i$ is the set of linear subspaces $W'\subset H^0(\cL_i)$ that map isomorphically onto $V_i$ via $\pi^i_0: H^0(\cL_i)\rightarrow H^0(L)$.  Let $\{s_{0,k}\}_k$ be a basis of $V_i$. A lifting $W'$ of $V_i$ is determined by the preimage of $s_{0,k}$ in $W'$ for each $k$. By Lemma \ref{filtration lemma}, we see that for every $s\in V_i$, any two liftings of $s$ to the level $i$ differ up to an element of $H^0(\cL_{i-1})$. Therefore, the relative dimension of the map $\rho_1$ is $h^0(\cL_{i-1})\cdot \kappa_i=(\sum\limits_{j=1}^{i}n_j(\lambda))\cdot\kappa_i$. Similarly the relative dimension of the second vertical map $\rho_2$ is $(\sum\limits_{j=1}^{i-1}n_j(\lambda))\cdot \kappa_i$.

Consider the horizontal map $h$. For every element $(\cL_{i-1}, W)\in \widetilde{S}_{i-1,i,\textbf{V}}$, we now give a criterion to decide whether it is in the image of $h$ or not. Fix an element $\mathcal{M}$ in the fiber of $\rho^i_{i-1}: \pic^d(C)_i\rightarrow \pic^d(C)_{i-1}$ over $\cL_{i-1}$. We identify the fiber $(\rho^i_{i-1})^{-1}(\cL_{i-1})$ with $H^1(C,\mathcal{O}_C)$. Let $\{s_{0,k}\}_k$ be a basis of $W$. We donote by $s_{i-1,k}$ the lifting of $s_{0,k}$ to the level $i-1$ in $W$. With the notation in Lemma \ref{equationoftransitionfunction}, every element $s_{i-1,k}=(\sum\limits_{j=0}^{i-1}c_{k,\alpha}^{(j)}t^j)\in H^0(\cL_{i-1})$ has an extension to a section of $\mathcal{M}'\in (\rho^i_{i-1})^{-1}(\cL_{i-1})$ if and only if the following equation holds
\begin{equation}\label{sk}\tag{$\dagger_k$}
\nu(s_{0,k}\otimes [\mathcal{M}'])= {\rm the~ cohomology ~class ~corresponding ~to~ } (- \gamma_\alpha^{-1}(\sum\limits_{j=1}^{i}\phi_\alb^{(j)} c^{(i-j)}_{k,\alpha})).
\end{equation}

Hence $(\cL_{i-1}, W)$ is in the image of $h$ if and only if there is a point $\mathcal{M}'\in (\rho^i_{i-1})^{-1}(\cL_{i-1})$ such that the above identity (\ref{sk}) holds for every $k$. We now assume that $(\cL_{i-1}, W)$ is in the image of $h$ and fix an element $(\mathcal{M}',W')$ in the fiber of $h$ over $(\cL_{i-1}, W)$. The above argument implies that
\begin{equation*}
\rho_1(h^{-1}(\cL_{i-1}, W))=\{[\cL_{i}]\in (\rho^i_{i-1})^{-1}(\cL_{i-1})~|~ \nu(s_{0,k}\otimes ([\cL_{i}]-[\mathcal{M}']))=0 {\rm ~for ~every ~} k\}.
\end{equation*}

By taking the dual linear spaces, we now deduce that $\rho_1(h^{-1}(\cL_{i-1}, W))$ is an affine space consisting of the elements in $H^1(C,\cO_C)$ that annihilate the image of the pairing
$$\mu_{V_i}: V_i\otimes H^0(C,K\otimes L^{-1})\rightarrow H^0(C,K).$$
It follows that $\dim \rho_1(h^{-1}(\cL_{i-1}, W))=g-(\kappa_i\cdot(l-d-1+g)-d_i).$

If $\cL_{i}$ is an element in $\rho_1(h^{-1}(\cL_{i-1}, W))$, then a pair $(\cL_i,W')$ is in the fiber of $h$ over $(\cL_{i-1}, W)$ if and only if the truncation map $H^0(\cL_i)\rightarrow H^0(\cL_{i-1})$ takes $W'$ into $W$. A lifting $W'$ of $W$ is determined by the preimage of $\{s_{i-1,k}\}_k$ in $W'$. By Lemma \ref{filtration lemma}, we see that any two liftings only differ by an element of $H^0(L)$. Hence we deduce that $h^{-1}(\cL_{i-1},W)\cap\rho_1^{-1}(\cL_i)$ is an affine space of dimension $\kappa_i\cdot l$. Thus the dimension of every nonempty fiber of the horizonal map $h$ is $g+d_i-\kappa_i\cdot (l-d-1+g)+\kappa_i\cdot l$.

 By Lemma \ref{dimofbundles}, we have:
 \begin{equation*}
 \begin{array}{cl}
 \dim\widetilde{S}_{i,i,\textbf{V}}=\dim S_{i,\textbf{V}}+\kappa_i\cdot (\sum\limits_{j=1}^{i}n_j(\lambda))\\
 \dim \widetilde{S}_{i,i,\textbf{V}}\leq \widetilde{S}_{i-1,i,\textbf{V}}+g+d_i-\kappa_i\cdot (l-d-1+g)+\kappa_i\cdot l\\
 \dim \widetilde{S}_{i-1,i,\textbf{V}}=\dim S_{i-1,\textbf{V}}+\kappa_i\cdot (\sum\limits_{j=1}^{i-1}n_j(\lambda)))
 \end{array}
 \end{equation*}
It follows that
 $$\dim S_{i,\textbf{V}}-\dim S_{i-1,\textbf{V}}\leq g-\kappa_i\cdot (g-d-1)+d_i-\kappa_i\cdot n_i(\lambda).$$ This proves $(1)$.

Part $(2)$ follows from $(1)$ by induction on $i$, using $\dim S_{0,\textbf{V}}=0$.
\end{proof}

\begin{remark}\label{rmk}
From the proof, we know that the equality in (1) can be achieved if the map $h: \widetilde{S}_{i,i,\textbf{V}}^{\lambda}\rightarrow \widetilde{S}_{i-1,i,\textbf{V}}^{\lambda}$ is a surjection. In fact, we will see that equality can be achieved when we apply the above lemma in the proofs of the main theorems.
\end{remark}

We now prove our first main result.

\noindent{\it Proof} of Theorem \textbf{A}.
Let $C$ be a curve of genus $g\geq 3$. Since we fix the curve $C$, we may and will write $W^r_d$ for $W^r_d(C)$ for every $r$ and $d$. By Remark \ref{smoothness}, we know that $\Theta_{\text{sing}}=W_{g-1}^{1}=\bigcup\limits_{l\geq 2} (W_{g-1}^{l-1}\smallsetminus W_{g-1}^{l})
$. To bound the dimension of $(\pi^{\Theta}_m)^{-1}(\Theta_{\text{sing}})$, it is enough to bound the dimension of $(\pi^{\Theta}_m)^{-1}(W_{g-1}^{l-1}\smallsetminus W_{g-1}^
{l})$ for each $l\geq 2$.

Let $L$ be a point in $W_{g-1}^{l-1}\smallsetminus W_{g-1}^{l}$. We have seen in the proof of Theorem \ref{multiplicity thm} that $\Theta_{m,L}=\pic^{g-1}(C)_{m,L}$ for $m< l $.  Hence $\dim\Theta_{m,L}=mg$ for $m<l$. We now assume that $m\geq l$. Recall that we put $C_{\lambda,m}=\{\cL_m\in \Theta_{m,L}~|~ \cL_m \text{ is of type } \lambda\}$, where $\lambda$ is a partition in $\Lambda_{l,m+1}$. By Lemma \ref{condition}, $\Theta_{m,L}$ is a finite union of $C_{\lambda,m}$, with $\lambda$ satisfying $\sum\limits_{i=1}^{l}{\lambda_i}\geq m+1$. In order to prove the theore, we first bound the dimension of each $C_{\lambda,m}$.

We now fix a partition $\lambda\in \Lambda_{l,m+1}$ with $\sum\limits_{i=1}^{l}{\lambda_i}\geq m+1$. Let $\kappa$ be the signature with $\kappa_i=1$ for every $i\leq \lambda_{l}-1$ and $\kappa_i=0$ for $i\geq \lambda_l$. If $\cL_m\in C_{\lambda}$, we denote by $\cL_i$ the image of $\cL_m$ under $\rho^m_i:\pic^m(C)_m\rightarrow \pic^i(C)_i$ for every $i\leq m$. The definition of $n_k(\lambda)$ implies that $\lambda_l$ is the largest index $k$ such that $n_k(\lambda)\neq 0$. Remark \ref{image} implies that the map $\pi^{\lambda_l-1}_0: H^0(\cL_{\lambda_l-1})\rightarrow H^0(L)$ is nonzero while the map $\pi^{\lambda_l}_0: H^0(\cL_{\lambda_l})\rightarrow H^0(L)$ is zero. Let $W\subset H^0(C,L)$ be the $1$--dimensional subspace in the image of $\pi^{\lambda_l-1}_0$. Consider a weak flag of $H^0(L)$ of signature $\kappa$,
\begin{equation*}
\textbf{V}_W: H^0(C,L)=V_0\supset V= V_1=\cdots V_{\lambda_l-1}=W\supset V_{\lambda_l}=\cdots=V_m=0.
\end{equation*}
Hence $\cL_i$ is in $S_{i,\textbf{V}}^\lambda$ for each $i\leq \lambda_l-1$. We thus conclude that the truncation map $\rho^m_{\lambda_{l}-1}: \pic^{g-1}(C)_{m,L}\rightarrow \pic^{g-1}(C)_{\lambda_{l}-1,L}$
maps $C_{\lambda,m}$ into $S_{\lambda_{l}-1,\kappa}^\lambda$.

Let $\text{Flag}_\kappa$ be the variety parameterizing all weak flags of signature $\kappa$. Let $W$ be a 1-dimensional subspace of $H^0(C,L)$. It defines a weak flag $\textbf{V}_W=\{V_i\}$ of signature $\kappa$. We thus have a bijection between $\text{Flag}_\kappa$ and $\PP(H^0(C,L))$. We now compute the dimension of $S_{\lambda_l-1,\textbf{V}_W}^{\lambda}$.  Let $s_0$ be a nonzero element in $W$. The multiplication map
$$m_{s_0}: H^0(C,K_C\otimes L^{-1})\rightarrow H^0(C,K_C)$$
is always injective. We thus conclude that $W\otimes H^0(C,K_C\otimes L^{-1})\rightarrow H^0(C,K_C)$ is injective. Recall that $d_i$ is the dimension of the kernel of map
$$\mu_{V_i}: V_i\otimes H^0(C,K_C\otimes L^{-1})\rightarrow H^0(C,K_C).$$ We conclude that $d_i=0$ for every $i$ with $0\leq i\leq m$. Moreover, the dual map of $m_{s_0}$, denoted by $m_{s_0}^*: H^1(C,\cO_C)\rightarrow H^1(C,L)$, is a surjection. Lemma \ref{equationoftransitionfunction} implies that
for every $i\leq \lambda_{l}-1$ and every $s_{i-1}\in H^0(\cL_{i-1})$ which is a lifting of $s_0$, there are line bundles $\cL_i$ over $\cL_{i-1}$ such that $s_{i-1}$ can be extended as a section of $\cL_i$.
Therefore, the horizontal map $h: \widetilde{S}_{i,i,\textbf{V}_W}^\lambda\rightarrow \widetilde{S}_{i-1,i,\textbf{V}_W}^\lambda$ is a surjection. By
Lemma \ref{dimension S} and Remark \ref{rmk}, we obtain that for every weak flag $\textbf{V}_W$
$$\dim S_{\lambda_l-1,\textbf{V}_W}^\lambda=(\lambda_l-1)g-\sum\limits_{k=1}^{\lambda_l-1}n_k(\lambda).$$

By Lemma \ref{dimofbundles}, we obtain that $\dim S_{\lambda_{l}-1,\kappa}^{\lambda}\leq \max\limits_{W}\{\dim S_{\lambda_l-1,\textbf{V}_W}^\lambda\}+\dim \PP H^0(L)$, where $W\in H^0(C,L)$. We consider $C_{\lambda,m}$ as a subset of the preimage of $S_{\lambda_{l}-1,\kappa}^{\lambda}$ under the map $\rho^m_{\lambda_l-1}:\pic^{g-1}(C)
_m \rightarrow \pic^{g-1}(C)_{\lambda_l-1}$.

Hence
\begin{equation*}
\begin{array}{cl}
\dim C_{\lambda,m} & \leq  g\cdot (m-\lambda_l+1)+\max\limits_{W}\{\dim S_{\lambda_l-1,\textbf{V}_W}\}+\dim \PP(H^0(L)) \\
&=mg-\sum\limits_{j=1}^{\lambda_l-1}n_j+l-1\\ &=mg-(\sum\limits_{i=1}^{l}\lambda_i-r_{\lambda_l})+l-1
\end{array}
\end{equation*}

Martens' theorem says that for every smooth curve of genus $g\geq 3$, and every $d$ and $r$ with $2\leq d\leq g-1$ and $0< 2r\leq d$, we have $\dim W^r_d(C)\leq
d-2r$. (See \cite{Kem}). For $1\leq m<l$, we have
\begin{equation*}
\begin{array}{cl}
\dim(\pi^{\Theta}_m)^{-1}(W^{l-1}_{g-1}\smallsetminus W^{l}_{g-1})&=\dim{(W^{l-1}_{g-1}\smallsetminus W^{l}_{g-1})}+ mg\\
&\leq g-1-2(l-1)+mg\\
&=(m+1)(g-1)+(m-2(l-1))\\
&\leq (m+1)(g-1)-m
\end{array}
\end{equation*}
For $m\geq l$, we have
\begin{equation}\label{star}\tag{$\ast$}
\begin{array}{cl}
\dim(\pi^{\Theta}_m)^{-1}(W^{l-1}_{g-1}\smallsetminus W^{l}_{g-1})&\leq \max\limits_{\lambda}\left\{\dim(C_\lambda)+g-2l+1\right\}\\
&\leq \max\limits_{\lambda} \left\{(m+1)(g-1)-(\sum\limits_{i=1}^{l}\lambda_i-m-1)-(l-r_{\lambda_l}(\lambda))\right\}
\end{array}
\end{equation}
where $\lambda$ varies over partitions in $\Lambda_{l,m+1}$ with $\sum_{i=1}^{l}\lambda_i\geq m+1$.
We conclude that for every $m$,we have $\dim(\pi^{\Theta}_m)^{-1}(W^{l-1}_{g-1}\smallsetminus W^{l}_{g-1})\leq (m+1)(g-1)$. Furthermore, if the equality is achieved for some $m$, then there is $\lambda\in \Lambda_{l,m+1}$ such that $\sum\limits_{i=1}^{l}\lambda_i=m+1$ and $l=r_{\lambda_l}(\lambda)$, i.e. $\lambda_1=\cdots=\lambda_l$. It follows that for $m$ such that $m+1$ is not divisible by any integer $l\in [2,g-1]$, the set $(\pi^{\Theta}_m)^{-1}(\Theta_{\text{sing}})=\bigcup\limits_{l\geq 2}(\pi^{\Theta}_m)^{-1}(W^{l-1}_{g-1}\smallsetminus W^{l}_{g-1})$  has dimension smaller than $(m+1)(g-1)$. Hence $\Theta_m$ is irreducible for arbitrarily large $m$, which implies that $\Theta_m$ is irreducible for all $m$. (See \cite[Proposition 1.6]{Mus1}.) This implies that
$$\dim (\pi^{\Theta}_m)^{-1}(\Theta_{\text{sing}})\leq (m+1)
(g-1)-1$$
for every $m$.

In order to get the lower bound for $\dim (\pi^{\Theta}_m)^{-1}(\Theta_{\text{sing}})$, we need the following lemma, see \cite[proposition 1.6]{Mus1}.
\begin{lemma}\label{count more}
If $X$ is a locally complete intersection variety of dimension n and $Z\subset X$ is a closed subscheme, then $\dim(\pi^X_{m+1})^{-1}(Z)\geq \dim(\pi^X_{m})^{-1}(Z) +n$ for every $m\geq 1$.
\end{lemma}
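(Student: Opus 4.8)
The plan is to realize $(\pi^X_{m+1})^{-1}(Z)$ as the common zero locus of $N-n$ functions on a product $(\pi^X_m)^{-1}(Z)\times\AAA^N$ and then invoke Krull's principal ideal theorem; the only nonformal point will be to check that this zero locus is nonempty along a component of maximal dimension, which I will extract from the natural $\mathbb{G}_m$-action on jet schemes that rescales the jet parameter.

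Since the statement is local on $X$ and $\dim(\pi^X_j)^{-1}(Z)$ can be read off on an affine open cover, I would first assume that $X$ is affine and presented as a complete intersection $X=V(f_1,\dots,f_c)\subseteq\AAA^N$ with $c=N-n$. Then $(\AAA^N)_{m+1}=\Spec k[x_i^{(k)}\,:\,1\le i\le N,\ 0\le k\le m+1]$ and $X_{m+1}=V\!\left(f_j^{(k)}\,:\,1\le j\le c,\ 0\le k\le m+1\right)$, where $f_j^{(k)}$ is the coefficient of $t^k$ in $f_j\!\left(\sum_k x_i^{(k)}t^k\right)$, a polynomial involving only the variables $x_i^{(0)},\dots,x_i^{(k)}$. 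Hence the closed subscheme $V\!\left(f_j^{(k)}:k\le m\right)$ of $(\AAA^N)_{m+1}$ is exactly $X_m\times\AAA^N$, the extra factor recording the top-level variables $x_i^{(m+1)}$ and the first projection being the truncation $\rho^{m+1}_m$; consequently $X_{m+1}=(X_m\times\AAA^N)\cap V\!\left(f_1^{(m+1)},\dots,f_c^{(m+1)}\right)$. Intersecting with $(\pi^X_{m+1})^{-1}(Z)$ and writing $W_j:=(\pi^X_j)^{-1}(Z)$, this gives, as subsets of $(\AAA^N)_{m+1}$, the identity $W_{m+1}=(W_m\times\AAA^N)\cap V\!\left(f_1^{(m+1)},\dots,f_c^{(m+1)}\right)$.

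Now I would choose an irreducible component $Y$ of the closed set $W_m$ with $\dim Y=\dim W_m$; then $Y\times\AAA^N$ is irreducible of dimension $\dim W_m+N$, and $W_{m+1}\cap(Y\times\AAA^N)$ is cut out of it by the $c=N-n$ functions $f_1^{(m+1)},\dots,f_c^{(m+1)}$. Provided this set is nonempty, Krull's Hauptidealsatz (applied $c$ times) forces every irreducible component of it, hence some component of $W_{m+1}$, to have dimension at least $\dim W_m+N-c=\dim W_m+n$, which is the assertion. For the nonemptiness I would use the $\mathbb{G}_m$-action on $X_m$ given by $\lambda\cdot(x_i^{(k)})=(\lambda^k x_i^{(k)})$: it commutes with the center map $\pi^X_m$, hence preserves $W_m$, and since $\mathbb{G}_m$ is connected it fixes the component $Y$ as a set. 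Taking a closed point $p\in Y$ with center $z=\pi^X_m(p)\in Z$, the limit $\lim_{\lambda\to0}\lambda\cdot p$ exists and equals the constant $m$-jet $\gamma_z$ at $z$; as $Y$ is closed and $\mathbb{G}_m$-stable, $\gamma_z\in Y$. Finally the constant $(m+1)$-jet at $z$ truncates under $\rho^{m+1}_m$ to $\gamma_z$ and lies in $X_{m+1}$ (all its $f_j^{(k)}$ vanish since $f_j(z)=0$), so it is a point of $W_{m+1}\cap(Y\times\AAA^N)$. I expect this last step — identifying the $\mathbb{G}_m$-limit with a constant jet and checking that it lifts to $X_{m+1}$ — to be the only place that needs care; the rest is the principal ideal theorem, and note that, unlike in the irreducibility results, one does not even need $X_m$ to be pure-dimensional here.
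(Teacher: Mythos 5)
Your proof is correct, and it is essentially the standard argument: the paper does not prove this lemma itself but cites it as \cite[Proposition 1.6]{Mus1}, and your route --- realizing $W_{m+1}$ inside $W_m\times\AAA^N$ as the zero locus of the $c=N-n$ top-degree equations, applying the generalized principal ideal theorem, and securing nonemptiness via the $\mathbb{G}_m$-contraction of a maximal component to a constant jet --- is precisely the proof in that reference. No gaps.
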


If $C$ is a hyperelliptic curve, we show that $\dim (\pi^{\Theta}_m)^{-1}(\Theta_{\text{sing}})=(m+1)(g-1)-1$ by induction on $m\geq 1$. By \cite[\S VI.4]{ACGH}, we know that $\Theta_{\text{sing}}$ has dimension equal to $g-3$, which implies that $\dim (\pi^{\Theta}_{1})^{-1}(\Theta_{\text{sing}})=g-3+g=2(g-1)-1$. We thus have the assertion for $m=1$.  Assume now that the assertion holds for $m-1$.  A repeated application of Lemma \ref{count more} implies that for every $m\geq 1$,
$\dim (\pi^{\Theta}_m)^{-1}(L)\geq (m-1)(g-1)+\dim(\pi_1^{-1})(L)$.
Hence for every $L\in \Theta_{\text{sing}}$, $\dim (\pi^{\Theta}_m)^{-1}(L)\geq (m-1)(g-1)+g$. Therefore
$$\dim (\pi^{\Theta}_m)^{-1}(\Theta_{\text{sing}})\geq\dim \Theta_{\text{sing}}+(m-1)(g-1)+g=(m+1)(g-1)-1.$$
This completes the proof of the theorem for hyperelliptic curves.

We now assume that $C$ is a nonhyperelliptic curve of genus $g$, and show that $\dim (\pi^{\Theta}_{m})^{-1}(\Theta_{\text{sing}})=(m+1)(g-1)-2$ by induction on $m$. By \cite[\S VI.4]{ACGH}, we have $\dim\Theta_{\text{sing}}=g-4$, hence $\dim (\pi^{\Theta}_{1})^{-1}(\Theta_{\text{sing}})=g-4+g=2(g-1)-2$. This proves the assertion for $m=1$. A repeated application of Lemma \ref{count more} implies that for every $L\in \Theta_{\text{sing}}$ and every $m\geq 1$, $\dim (\pi^{\Theta}_m)^{-1}(L)\geq (m-1)(g-1)+\dim(\pi_1^{\Theta})^{-1}(L)$. We thus have $\dim (\pi^{\Theta}_m)^{-1}(\Theta_{\text{sing}})\geq g-4+(m-1)(g-1)+g=(m+1)(g-1)-2$.

In order to finish the proof, it is enough to show that
$$\dim (\pi^{\Theta}_m)^{-1}(\Theta_{\text{sing}})<(m+1)
(g-1)-1$$  for every $m$. Assume there is some $m_{0}$ such that $\dim (\pi^{\Theta}_{m_{0}})^{-1}(\Theta_{\text{sing}})\geq (m_{0}+1)(g-1)-1$. A repeated application of Lemma \ref{count more} implies that for every $m>m_{0}$,
$$\dim (\pi^{\Theta}_{m})^{-1}(\Theta_{\text{sing}})\geq (m-m_0)(g-1)+\dim (\pi^{\Theta}_{m_{0}})^{-1}(\Theta_{\text{sing}})\geq(m+1)(g-1)-1.$$ On the other hand, for nonhyperelliptic curves, Martens' theorem has a better bound, namely $\dim W^{r}_{d}\leq d-2r-1$. By applying it for the theta divisor, we have
$$\dim (W^{l-1}_{g-1}\smallsetminus W^{l}_{g-1})\leq g-2l.$$
Arguing as in (\ref{star}), we obtain
\begin{eqnarray*}
\dim(\pi^{\Theta}_m)^{-1}(W^{l-1}_{g-1}\smallsetminus W^{l}_{g-1})\leq \max\limits_{\lambda}\left\{(m+1)(g-1)-(\sum\limits_{i=1}^{l}\lambda_i-m-1)-(l-r_{\lambda_l}(\lambda))-1\right\}
\end{eqnarray*}
where $\lambda$ varies over partitions in $\Lambda_{l,m=1}$ with $\sum_{i=1}^l\lambda_i\geq m+1$.
It follows that unless there is a $\lambda\in \Lambda_{l,m+1}$ with $\sum\limits_{i=1}^{l}\lambda_i=m+1$ and $r_{\lambda_l}(\lambda)=l$, we have $$\dim(\pi^{\Theta}_m)^{-1}(\Theta_{\text{sing}})< (m+1)(g-1)-1.$$ Therefore this holds for every $m$ such that $m+1$ is not divisible by any integer $2\leq l\leq g-1$. Since there are arbitrarily large such $m$, we obtain a contradiction.
\hfill{$\Box$}

In \cite{Mus1}, Musta\c{t}\v{a} describes complete intersection rational singularities in terms of jet schemes as follows.
 If X is a local complete intersection variety of dimension $n$ over $k$, then the following are equivalent:
\begin{enumerate}
\item[(i)] $X$ has rational singularities.

\item[(ii)] $X$ has canonical singularities.

\item[(iii)] $X_m$ is irreducible for each $m$.

\item[(iv)] $\dim \pi_m^{-1}(X_{\text{sing}})< n(m+1)$ for every $m$.
\end{enumerate}
The equivalence of the first two parts is due to Elkik, see \cite{Elk}.
Note also that by Theorem 3.3 in \cite{EMY}, for a reduced irreducible divisor $D$ on a smooth variety $X$ of dimension $n$,
the following are equivalent,

\begin{enumerate}
\item[(i)]  The jet scheme $D_m$ is a normal variety for every $m$.\\
\item[(ii)]  $D$ has terminal singularities. \\
\item[(iii)]  For every $m$, $\dim (\pi^{D}_m)^{-1}(D_{\text{sing}})\leq (m+1)(n-1)-2$.
\end{enumerate}
Applying these two results to the theta divisor, we obtain the following result concerning the singularities of this variety.

\begin{corollary}\label{terminal and ratioanl}
Let  $C$ be a smooth projective curve  of genus $g\geq 3$ over $k$.
The theta divisor has terminal singularities if $C$ is a nonhyperelliptic  curve.
If $C$ is hyperelliptic,  then the theta divisor has canonical non-terminal singularities. In particular, the theta divisor has rational singularities for every smooth curve.
\end{corollary}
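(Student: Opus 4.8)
The plan is to feed Theorem A into the two jet-theoretic characterizations of singularities recalled just before the statement: Musta\c{t}\v{a}'s criterion from \cite{Mus1} for local complete intersection varieties, and Theorem 3.3 of \cite{EMY} for reduced irreducible divisors on smooth varieties. Two preliminary remarks set things up. First, $\Theta\subset\pic^{g-1}(C)$ is a reduced irreducible Cartier divisor in the smooth projective variety $\pic^{g-1}(C)$, which is a torsor under the Jacobian of $C$ and hence has dimension $g$; thus $\Theta$ is a local complete intersection variety of dimension $g-1$, and both criteria apply to it. Second, for $g\le 2$ Riemann's Singularity Theorem already shows that $\Theta$ is smooth, so the last assertion of the statement is automatic there; hence for $g\ge 3$ everything follows from Theorem A.

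Suppose first that $C$ is nonhyperelliptic of genus $g\ge 3$. Theorem A (or, when $\Theta$ happens to be smooth, the triviality $(\pi^\Theta_m)^{-1}(\emptyset)=\emptyset$) gives $\dim(\pi^\Theta_m)^{-1}(\Theta_{\text{sing}})\le (g-1)(m+1)-2$ for every $m\ge 1$, while for $m=0$ one has $\dim\Theta_{\text{sing}}\le g-4\le g-3=(g-1)-2$ by \cite[\S VI.4]{ACGH}. Regarding $\Theta$ as a divisor on the $g$-dimensional smooth variety $\pic^{g-1}(C)$, this is precisely condition (iii) of the criterion of \cite{EMY} with $n=g$; hence $\Theta$ has terminal singularities. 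In particular it has canonical and, by the equivalence of canonical and rational singularities for local complete intersections (\cite{Mus1}, \cite{Elk}), rational singularities.

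Now suppose that $C$ is hyperelliptic of genus $g\ge 3$. Theorem A gives $\dim(\pi^\Theta_m)^{-1}(\Theta_{\text{sing}})=(g-1)(m+1)-1$ for every $m\ge 1$, and $\dim\Theta_{\text{sing}}=g-3$ by \cite[\S VI.4]{ACGH}; in each case this quantity is strictly smaller than $(g-1)(m+1)=(\dim\Theta)(m+1)$. Applying Musta\c{t}\v{a}'s criterion with $X=\Theta$ (so that the relevant dimension is $n=\dim\Theta=g-1$), condition (iv) holds, hence each jet scheme $\Theta_m$ is irreducible and $\Theta$ has canonical, and therefore rational, singularities. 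On the other hand $\dim(\pi^\Theta_1)^{-1}(\Theta_{\text{sing}})=2(g-1)-1>2(g-1)-2$, so condition (iii) of the criterion of \cite{EMY} fails at $m=1$; here we use that $\Theta_{\text{sing}}\ne\emptyset$, which holds since $\dim\Theta_{\text{sing}}=g-3\ge 0$. Therefore $\Theta$ is not terminal. Together with the remark on $g\le 2$, this gives the final sentence.

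The argument is short once Theorem A is available, and the only point requiring care is keeping track of which dimension plays the role of $n$ in each criterion: for \cite{EMY}, $\Theta$ is a divisor in the $g$-dimensional ambient variety $\pic^{g-1}(C)$, so the threshold for terminality is $(m+1)(g-1)-2$, whereas for \cite{Mus1}, $\Theta$ itself is the variety in question, so the threshold for canonicity (and rationality) is $(m+1)(g-1)$. Matching the two exact values $(g-1)(m+1)-2$ and $(g-1)(m+1)-1$ from Theorem A against these thresholds is exactly what separates the terminal conclusion in the nonhyperelliptic case from the canonical non-terminal conclusion in the hyperelliptic case, while the cruder comparison with $(m+1)(g-1)$ already suffices for rationality in both cases.
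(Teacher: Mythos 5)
Your proof is correct and follows exactly the route the paper intends: the paper gives no separate argument for this corollary beyond the sentence ``Applying these two results to the theta divisor,'' and your matching of the exact dimensions from Theorem A against the thresholds $(m+1)(g-1)$ (Musta\c{t}\v{a}, with $n=\dim\Theta=g-1$) and $(m+1)(g-1)-2$ (Ein--Musta\c{t}\v{a}--Yasuda, with $\Theta$ a divisor in the $g$-dimensional $\pic^{g-1}(C)$) is precisely that application. The care you take with the $m=0$ case and with which $n$ enters each criterion is exactly the right bookkeeping.
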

We now apply the above ideas to compute the log canonical threshold of the pair
$(\pic^d(C), W^r_d(C))$ at a point $L\in W^r_d(C)$, where $C$ is general in the moduli space of curves.

In \cite[Corollary 3.6]{Mmus2}, one gives the following formula for the log canonical threshold of a pair in terms of the dimensions of the jet schemes. If $Y\subset X$ is a closed subscheme and $Z\subset X$ is a nonempty closed subset, then the log canonical threshold of the pair $(X,Y)$  at $Z$ is given by
$$\lct_Z(X,Y)=\dim X-\sup\limits_{m\geq 0}\frac{\dim (\pi^Y_m)^{-1}(Y\cap Z)}{m+1}.$$

For every $L\in W^r_d(C)$, the above formula implies that $$\lct_L(\pic^d(C),W^r_d(C))=g-\sup\limits_{m\geq 0} \frac{\dim W^r_d(C)_{m,L}}{m+1}.$$ Our main goal is to estimate the dimension of $W^r_d(C)_{m,L}$ for each $m$.

We now turn to the proof of Theorem \textbf{B}.
Let $C$ be a general smooth projective curve of genus $g$ and let $L$ be a line bundle on $C$. The generality assumption on $C$ implies that the natural pairing
$$\mu_0: H^0(C, L)\otimes H^0(C,K_C\otimes L^{-1})\rightarrow H^0(C,K_C)$$
is injective for every $L$. This was stated by Petri and first proved by Gieseker \cite{Gie}.

Before proving the theorem, we need to prove an identity for every partition as preparation.

\begin{lemma}\label{idforpart}
Let $\lambda\in \Lambda_{l,m+1}$ and $\lambda_0=0$. We now prove that
$$\sum\limits_{i=1}^{\lambda_l}
n_i^2(\lambda)=\sum\limits_{i=1}^{l}(l-i+1)^2(\lambda_i-\lambda_{i-1}).$$
\end{lemma}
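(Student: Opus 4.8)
The plan is to prove the identity
\[
\sum_{i=1}^{\lambda_l} n_i^2(\lambda) = \sum_{i=1}^{l}(l-i+1)^2(\lambda_i - \lambda_{i-1})
\]
by a direct bookkeeping argument, organizing both sides according to which ``block'' of constant $\lambda$-values they fall into. Recall $n_i(\lambda)$ is the number of indices $k$ with $\lambda_k \ge i$; since $\lambda$ is weakly increasing, for $\lambda_{j-1} < i \le \lambda_j$ we have exactly $n_i(\lambda) = l - j + 1$ (the indices $k = j, j+1, \dots, l$ are those with $\lambda_k \ge i$). So as $i$ ranges over the interval $(\lambda_{j-1}, \lambda_j]$, which has $\lambda_j - \lambda_{j-1}$ integers in it, the value $n_i(\lambda)$ is constantly $l-j+1$.

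First I would verify the key observation above precisely: the intervals $(\lambda_{j-1}, \lambda_j]$ for $j = 1, \dots, l$ partition the set $\{1, 2, \dots, \lambda_l\}$ (allowing empty intervals when $\lambda_{j-1} = \lambda_j$), and on the $j$-th interval $n_i(\lambda) = l - j + 1$. Then I would simply split the left-hand sum over these intervals:
\[
\sum_{i=1}^{\lambda_l} n_i^2(\lambda) = \sum_{j=1}^{l} \sum_{i = \lambda_{j-1}+1}^{\lambda_j} n_i^2(\lambda) = \sum_{j=1}^{l} (\lambda_j - \lambda_{j-1})(l-j+1)^2,
\]
which is exactly the right-hand side after renaming $j$ to $i$.

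There is essentially no obstacle here; the only thing to be careful about is handling the degenerate intervals (when consecutive $\lambda$'s are equal, or when $\lambda_1 = 0$ is excluded since partitions have $\lambda_k \ge 1$, so the first interval is $(0, \lambda_1] = \{1, \dots, \lambda_1\}$ with $\lambda_0 = 0$ as stipulated), and confirming that the sum of the interval lengths $\sum_{j=1}^l (\lambda_j - \lambda_{j-1})$ telescopes to $\lambda_l$, matching the range of $i$ on the left. Alternatively, one could phrase this via Abel summation / summation by parts on $\sum_i n_i^2$ using $n_i - n_{i+1} = r_i(\lambda)$, but the block decomposition is cleaner and more transparent, so that is the route I would take.
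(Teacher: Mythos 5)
Your proof is correct and is essentially the same argument as the paper's: both decompose the sum according to the level sets of $\lambda$, using that $n_i(\lambda)=l-j+1$ for $\lambda_{j-1}<i\le\lambda_j$. The paper organizes this by explicitly listing the distinct blocks of equal $\lambda$-values and working from the right-hand side, whereas you partition $\{1,\dots,\lambda_l\}$ into the (possibly empty) intervals $(\lambda_{j-1},\lambda_j]$ and work from the left, but this is only a cosmetic difference.
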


\begin{proof}
Given a partition $\lambda\in \Lambda_{l,m+1}$, we may write it as:
\begin{eqnarray*}
1\leq \lambda_1=\cdots=\lambda_{m_1}<\lambda_{m_1+1}=\cdots=\lambda_{m_2}<\lambda_{m_2+1}\cdots \lambda_{m_k}<\lambda_{m_{k}+1}=\cdots=\lambda_l.
\end{eqnarray*}
For simplicity, we write $n_i$ for $n_{i}(\lambda)$. It is easy to see that
\begin{eqnarray*}
n_1&=&\cdots=n_{\lambda_{m_1}}=l\\
n_{\lambda_{m_1}+1}&=&\cdots=n_{\lambda_{m_2}}=l-m_1\\
~~~\cdots\\
n_{\lambda_{m_{k}}+1}&=&\cdots=n_{\lambda_{l}}=l-m_{k-1}
\end{eqnarray*}

This implies that
\begin{eqnarray*}
\sum\limits_{i=1}^{l}(l-i+1)^2(\lambda_i-\lambda_{i-1})&=&
l^2(\lambda_{1})+(l-m_1)^2(\lambda_{m_1+1}-\lambda_{m_1})+\cdots+(l-m_{k})^2(\lambda_{m_k+1}-\lambda_{m_k})\\
&=&l^2(\lambda_{m_1})+(l-m_1)^2(\lambda_{m_2}-\lambda_{m_1})+\cdots+(l-m_{k})^2(\lambda_{l}-\lambda_{m_k})\\
&=&\sum\limits_{i=1}^{\lambda_{m_1}}l^2+\sum\limits_{i=\lambda_{m_1+1}}^{\lambda_{m_2}}n_i^2+\cdots+\sum\limits_{i={\lambda_{m_{k}+1}}}^{\lambda_l}n_i^2\\
&=&\sum\limits_{i=1}^{\lambda_l}n_i^2=\sum\limits_{i=1}^{\lambda_l}n_i^2(\lambda)
\end{eqnarray*}
\end{proof}

\noindent{\it Proof} of Theorem B.
Let C be a general smooth projective curve in the sense of Petri and Gieseker. Let $L$ be a line bundle in $W^r_d(C)$ with $l=h^0(C,L)\geq r+1$.
Since we are only interested in the asymptotic behavior of $W^r_d(C)_{m,L}$, we can assume that $m\geq l$. By Lemma \ref{conditionw} we have a stratification $W^r_d(C)_{m,L}=
\bigcup C_{\lambda,m}$, where $\lambda\in \Lambda_{l,m+1}$ are taken over all length $l$ partitions satisfying  $\sum\limits_
{i=1}^{l-r }{\lambda_i}\geq m+1$.

We now fix such a partition $\lambda$. Let $\kappa$ be a signature with $\kappa_i=n_{i+1}(\lambda)$ for $i$ with $1\leq i\leq m$. For every $\cL_m\in C_{\lambda,m}$, we denote by $\cL_i$ the image of $\cL_m$ under the truncation $\rho^m_{i}: \pic^d(C)_m\rightarrow \pic^d(C)_i$. The images $V_i$ of the maps $\pi^i_0: H^0({\cL_i})\rightarrow H^0(C,L)$ give a weak flag $\textbf{V}_{\cL_m}$. By Remark \ref{image}, we obtain $\dim V_i=h^0(\cL_i)-h^0(\cL_{i-1})=n_{i+1}(\lambda)$. Therefore $\textbf{V}_{\cL_m}$ is a weak flag of signature $\kappa$. The image of $\cL_m$ in $\pic^d(C)_{\lambda_l-1}$ is in $S_{\lambda_l-1,\textbf{V}_{\cL_m}}^\lambda$. Hence the truncation map $\rho_{\lambda_{l}-1}^{m}: \pic^d(C)_{m} \rightarrow \pic^d(C)_{\lambda_{l}-1}$ maps $C_{\lambda,m}$ to $S_{\lambda_l-1, \kappa}^\lambda=\bigcup\limits_{\textbf{V}}S_{\lambda_l-1,\textbf{V}}^\lambda$, where $\textbf{V}$ varies over all weak flags of signature $\kappa$. The key step is to compute the dimension of $S_{\lambda_l-1,\textbf{V}}^\lambda$ for each $\textbf{V}$. We keep the notation in the proof of Lemma \ref{dimension S}.

The fact that the canonical pairing
$$\mu_0: H^0(C, L)\otimes H^0(C,K_C\otimes L^{-1})\rightarrow H^0(C,K_C)$$
 is injective implies that all restrictions $\mu_{V_i}: V_i\otimes H^0(C,K_C\otimes L^{-1})\rightarrow H^0(C,K_C)$ are injective. Hence $d_i=\dim \ker \mu_{V_i}$ is zero for every weak flag $\textbf{V}$ of $H^0(L)$ of signature $\kappa$.

We now show that if the canonical pairing $\mu_0$ is injective, then all horizontal maps $h: \widetilde{S}_{i,i,\textbf{V}}^\lambda\rightarrow \widetilde{S}_{i-1,i, \textbf{V}}^\lambda$ in the proof of Lemma \ref{dimension S} are surjective. Let $(\cL_{i-1}, W)$ be an element in $\widetilde{S}_{i-1,i,\textbf{V}}^\lambda$. Given a point $\mathcal{M}$ in the fiber of $\rho^i_{i-1}: \pic^d(C)_i\rightarrow \pic^d(C)_{i-1}$ over $\cL_{i-1}$, we get an isomorphism $(\rho^i_{i-1})^{-1}(\cL_{i-1})\cong H^1(C,\mathcal{O}_C)$.  Let $\{s_{0,p}\}_p$ be a basis of $V_i$, and $s_{i-1,p}$ the lifting of $s_{0,p}$ to the level $i-1$ in $W$. It is easy to see that $(\cL_{i-1}, W)$ is in the image of $h$ if and only if there is an element $\cL_i\in (\rho^i_{i-1})^{-1}(\cL_{i-1})$ such that for every $p$, the section $s_{i-1,p}$ has an extension to a section of $\cL_{i}$. By Lemma \ref{equationoftransitionfunction}, we deduce that for every $p$, $s_{i-1,p}$ has an extension to a section of $\cL_i$ if and only if the equation of the following form holds:
\begin{equation}\label{ssk}\tag{$\diamond_p$}
\nu(s_{0,p}\otimes [\cL_i])=\tau_p.
\end{equation}
where
$\tau_p$ is a cohomology class in $H^1(C,L)$ determined by the section $s_{i-1,p}$.
In order to prove that $(\cL_{i-1},W)$ is in the image of $h$, it suffices to show 
the existence of an element $\cL_i\in (\rho^i_{i-1})^{-1}(\cL_{i-1})$ such that the equation (\ref{ssk}) holds for every $p$.

Recall that $H^1(C,\mathcal{O}_C)$ is the dual space of $H^0(C,K_C)$, hence we identify $[\cL_i]$ with a linear map $H^0(C,K_C)\rightarrow k$. By the duality between $H^1(C,L)$ and $H^0(C,K_C\otimes L^{-1})$, we identify $\tau_p$ with a linear map $H^0(C,K_C\otimes L^{-1})\rightarrow k$. For every $p$, there is a map $m_{s_{0,p}}: H^0(C,K_C\otimes L^{-1})\rightarrow H^0(C,K_C)$  taking $\gamma \in H^0(C,K_C\otimes L^{-1})$ to $\mu_{0}(s_{0,p}\otimes \gamma)$. Hence the equation (\ref{ssk}) holds for $\cL_i$ for all $p$ if and only if the composition map
$$H^0(C,K_C\otimes L^{-1})\stackrel{m_{s_{0,p}}}\hookrightarrow H^0(C,K_C)\stackrel{[\cL_i]}\rightarrow k$$
is equal to $\tau_p$ for all $p$.

Let $A_p$ be the image of $m_{s_{0,p}}$. The fact that $V_i \otimes H^0(C,K_C\otimes L^{-1})\rightarrow H^0(C,K_C)$ is injective implies that the sum $\sum\limits_{p}A_p$ in $H^0(C,K_C)$ is a direct sum. We conclude that there is a \u{C}ech  cohomology classes $[\cL_i]$ satisfying (\ref{ssk}) for all $p$. Therefore $(\cL_{i-1}, W)$ is in the image of $h$.

Applying Lemma \ref{dimension S} and Remark \ref{rmk} to the case $i={\lambda_l-1}$, we obtain
\begin{equation*}
  \dim S_{\lambda_l-1,\textbf{V}}^{\lambda}=g(\lambda_l-1)-\sum\limits_{i=2}^{\lambda_l}n_{i}(\lambda)(g-d-1)-
  \sum\limits_{i=1}^{\lambda_l-1}n_{i+1}(\lambda)n_{i}(\lambda).
\end{equation*}
Recall that $\text{Flag}_\kappa$ is the variety parameterizing all weak flag variety of signature $\kappa$. We denote by $D_{\kappa}$ the dimension of $\text{Flag}_{\kappa}$. It is easy to see that $\text{Flag}_\kappa$ is exactly the usual flag variety of signature $\kappa'$ where $\kappa'$ is the longest decreasing subsequence of $\kappa$. Since $k_1=n_2(\lambda)\leq n_1(\lambda)=l=H^0(C,L)$, there are only finitely many ways to get strictly decreasing sequence with values $\leq l$ and length $\leq l$. There are thus only finitely many integers $D_{\kappa}$. Let $K_1$ be the maximal value among these numbers. Clearly $K_1$ only depends on $l$. In particular, it is independent on $m$, hence $\lim\limits_{m\rightarrow \infty}\frac{K_1}{m}=0$.

Note that $S_{\lambda_l-1,\kappa}^\lambda=\bigcup\limits_{\textbf{V}}S_{\lambda_l-1,\textbf{V}}^\lambda$, where $\textbf{V}$ varies over all weak flags in $\text{Flag}_\kappa$. We thus have $\dim S_{\lambda_l-1,\kappa}^\lambda\leq \max\limits_{\textbf{V}}\{\dim S_{\lambda_l-1,\textbf{V}}^\lambda\}+K_1$.  We have seen that $\rho^m_{\lambda_l-1}(C_{\lambda,m})\subset S_{\lambda_l-1,\kappa}^\lambda$, hence $\dim C_{\lambda,m}\leq(m-\lambda_{l}+1)g+ \dim S_{\lambda_l-1,\kappa}^\lambda$. For each $m\geq l$, we thus have
\begin{eqnarray*}
  \codim(\pic^d(C)_{m,L}, W^r_d(C)_{m,L}) &=& \min\limits_\lambda\left\{mg-\dim C_{\lambda,m}\right\}\\
  &\geq& \min\limits_\lambda\left\{\sum\limits_{i=2}^{\lambda_l}n_i(\lambda)(g-d-1)+ \sum\limits_{i=1}^{\lambda_l-1}n_{i}(\lambda)n_{i+1}(\lambda)-K_1\right\}\\
   &=& \min\limits_\lambda\left\{(\sum\limits_{i=1}^{l}\lambda_i-l)(g-d-1)+\sum\limits_{i=1}^{\lambda_l-1}n_{i}(\lambda)n_{i+1}(\lambda)-K_1\right\}
\end{eqnarray*}
where $\lambda$ varies over the partitions in $\Lambda_{l,m+1}$ with $\sum\limits_
{i=1}^{l-r}{\lambda_i}\geq m+1$.

Note that $\sum\limits_{i=1}^{\lambda_l-1}n_i(\lambda)n_{i+1}(\lambda)\geq \sum\limits_{i=1}^{\lambda_l}n_i^2(\lambda)-l^2$, and since
$\lim\limits_{m\rightarrow \infty}\frac{l^2}{m}=0=\lim\limits_{m\rightarrow \infty}\frac{K_1}{m}$, we obtain

\begin{eqnarray*}
&&\inf\limits_{m\rightarrow \infty} \frac{\codim (\pic^d(C)_{m,L}, W^r_d(C)_{m,L})}{m+1} \\
&\geq& \inf\limits_{m\rightarrow \infty}\min\limits_\lambda\left\{\frac{1}{m+1}\left((\sum\limits_{i=1}^{l}\lambda_i-l)(g-d-1)+\sum\limits_{i=1}^{\lambda_l-1}n_{i+1}(\lambda)n_i(\lambda)-K_1\right)\right\}\\
&\geq&\inf\limits_{m\rightarrow \infty}\min\limits_\lambda\left\{\frac{1}{m+1}\left((\sum\limits_{i=1}^{l}\lambda_i)
(g-d-1)+\sum\limits_{i=1}^{\lambda_l}
n_i^2(\lambda)\right)\right\}\\
\end{eqnarray*}

By Lemma \ref{idforpart}, we thus obtain
\begin{eqnarray*}
&&\inf\limits_{m\rightarrow \infty} \frac{\codim (\pic^d(C)_{m,L}, W^r_d(C)_{m,L})}{m+1}\\
&\geq & \inf\limits_{m\rightarrow \infty}\min\limits_\lambda\left\{\frac{1}{m+1} \left(\sum\limits_{i=1}^{l}(l-i+1)(\lambda_i-\lambda_{i-1})(g-d-1)+ \sum\limits_{i=1}^{l}(l-i+1)^2(\lambda_i-\lambda_{i-1})\right)  \right\}\\
&=&\inf\limits_{m\rightarrow \infty}\min\limits_\lambda\left\{\frac{1}{m+1}\sum\limits_{i=1}^{l}(\lambda_i-\lambda_{i-1})(g-d+l-i)(l-i+1)
\right\}
\end{eqnarray*}

For every $i$ with $1\leq i\leq l$, let $x_i=\lambda_i-\lambda_{i-1}$. Consider a linear function of the form $\sum\limits_{i=1}^{l}b_ix_i$ with $b_i\geq 0$, defined over the region $$\{(x_1,\cdots,x_l)\in \mathbb{R}^l~ |~ x_i\geq 0 {\rm ~for~every ~} i, ~\sum\limits_{i=1}^{l-r}(l-i-r+1)x_i\geq m+1\}.$$ The minimum value of this function is achieved at the vertices of this region, i.e. the points with all the $x_i$ but one equal to $0$ and $\sum\limits_{i=1}^{l-r}(l-i-r+1)x_i=m+1$.

We thus have
\begin{equation}\label{rhs}\tag{$\sharp$}
\lct_L(\pic^d(C),W^r_d(C))\geq\min\limits_{i=1}^{l-r}\left\{\frac{(l+1-i)(g-d-i+l)}{l+1-r-i}\right\}
\end{equation}

On the other hand, recall that one can locally define a map from $\pic^d(C)$ to a variety of matrices $M_{(d+e+1-g)\times e}$ such that $W^r_d(C)$ is the pull back of a suitable generic determinantal variety $Y$ defined by $e+d+1-g-r$ minors. Let $\Phi_{L}$ be the image of $L$. The right hand side in (\ref{rhs}) is the log canonical threshold of the pair $(M_{(d+e+1-g)\times e},Y)$ at the point $\Phi_{L}$ (for the formula of log canonical threshold of a generic determinantal variety, see \cite[Theorem 3.5.7]{Doc}). We thus have $\lct_L(\pic^d(C),W^r_d(C)) \leq \lct_{\Phi_{L}}(M_{(d+e+1-g)\times e},Y)$, by \cite[Example 9.5.8]{Lar}, which completes the proof.
\hfill{$\Box$}

\section{Appendix}
Let $L$ be a line bundle in $\pic^d(C)$ with $l=h^0(C,L)$. In this section, we are going to show that the subsets $S_{i,\textbf{V}}^{\lambda}$ and $S_{i,\kappa}^{\lambda}$ of $\pic^d(C)_{i,L}$ defined in section $2$ are constructible subsets of $\pic^d(C)_{i,L}$. The key point is to realize $\widetilde{S}_{i,j,\textbf{V}}^{\lambda}$ as a constructible subset of a suitable product of Grassmann bundles.

Let $X$ be a scheme and $E$ a vector bundle of rank $n$ over $X$. For every $d\leq n$, we denote by $Gr(d,E)$ the Grassmann bundle of $d$-dimensional subspaces in $E$ and by $\pi$ the projection morphism from $Gr(d,E)$ to $X$. We write elements in $Gr(d,E)$ as pairs $(x,W)$ where $x$ is a point in $X$ and $W$ is a dimension $d$ subspace of $E_x$.

\begin{lemma}\label{degeneracy}
If $\Phi: E\rightarrow F$ is a homomorphism of vector bundles on the scheme $X$, then we have
\begin{enumerate}
\item The subset $I_{\Phi}:=\{x\in X~|~ \Phi_x: E_x\rightarrow F_x ~\text{is an injection}\}$ is an open subset of $X$.
\item If $H$ is a subbundle of $F$, then the set $M^{\Phi}_{H}:=\{x~|~\Phi_x(E_x)\subset H_x\}$ is a closed subset of $X$.
\end{enumerate}
\end{lemma}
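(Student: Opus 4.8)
The plan is to reduce both assertions to elementary statements about matrices of regular functions by passing to an affine open cover of $X$ over which $E$, $F$, and (for part (2)) the quotient $F/H$ become trivial, and then to identify each locus with a non-vanishing, respectively vanishing, locus of finitely many regular functions.

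For part (1), fix an affine open $U \subseteq X$ with $E|_U \cong \cO_U^{\oplus n}$ and $F|_U \cong \cO_U^{\oplus m}$, where $n = \rank E$; then $\Phi|_U$ is given by an $m \times n$ matrix $A$ over $\cO(U)$. For $x \in U$ the linear map $\Phi_x \colon E_x \to F_x$ is injective exactly when $A(x)$ has rank $n$, i.e.\ when some $n \times n$ minor of $A$ is nonzero at $x$. As there are only finitely many such minors, $I_\Phi \cap U$ is the union of the principal open sets $\{x : \det A'(x) \neq 0\}$ taken over the $n \times n$ submatrices $A'$ of $A$, hence is open in $U$; letting $U$ vary over the cover gives that $I_\Phi$ is open in $X$.

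For part (2), the first step is to use that $H$ is a \emph{subbundle} of $F$: this makes $Q := F/H$ locally free, with an exact sequence $0 \to H \to F \to Q \to 0$. Let $\psi \colon E \to Q$ be the composite of $\Phi$ with $F \to Q$. Then $\Phi_x(E_x) \subseteq H_x$ if and only if $\psi_x \colon E_x \to Q_x$ is the zero map, so $M^\Phi_H$ is exactly the locus where the global section $\psi \in \Hom(E,Q)$ vanishes. Over an affine open $U$ trivializing $E$ and $Q$, the map $\psi|_U$ is a matrix $B$ over $\cO(U)$, and $M^\Phi_H \cap U$ is the common zero set of the entries of $B$, which is closed in $U$; hence $M^\Phi_H$ is closed in $X$.

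There is no substantial obstacle here — the argument is a routine translation of the fiberwise linear algebra into the (non-)vanishing of minors. The two points that merit a little care are that in (1) it is $\rank E$, not $\rank F$, which governs injectivity, so one must work with $n \times n$ minors; and in (2) the subbundle hypothesis is genuinely needed, since for a merely coherent subsheaf $H$ the quotient $F/H$ need not be locally free and the locus $M^\Phi_H$ need not be closed.
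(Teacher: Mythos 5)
The paper omits this proof entirely, stating only that it is standard and left to the reader; your argument is correct and is exactly that standard argument (maximal minors for openness of the injectivity locus, and the vanishing of the composite $E\to F\to F/H$ for closedness, using that $H$ being a subbundle makes $F/H$ locally free so the fiberwise kernel of $F_x\to (F/H)_x$ is $H_x$). Both of the cautionary points you flag are the right ones, so nothing further is needed.
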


The proof of Lemma \ref{degeneracy} is standard, so we leave it to the reader.

Recall that $\mathcal{P}$ is a Poincar\'{e} line bundle on $\pic^d(C)\times C$. From the definition of jet schemes, we have $\pic^d(C)_m\times C_m\cong (\pic^d(C)\times C)_m\cong \Hom(T_m, \pic^d(C)\times C)$. By the adjunction (\ref{adjunction}) in section $1$ for $Y=\pic^d(C)_m\times C_m$ and $X=\pic^d(C)\times C$, the identity map of $\pic^d(C)_m\times C_m$ gives an evaluation morphism $\pic^d(C)_m\times C_m\times T_m\xrightarrow{\Xi} \pic^d(C)\times C$. For every $m$, we also have a morphism $C\xrightarrow{\gamma_m} C_m$ that takes a point to the corresponding constant jet. We have the composition map
$$\eta: \pic^d(C)_m\times C\times T_m\xrightarrow{\id\times \gamma_m \times \id} \pic^d(C)_m\times C_m\times T_m\xrightarrow{\Xi} \pic^d(C)\times C.$$
We denote by $\mathcal{B}_m$ the pull back of the line bundle $\mathcal{P}$ to $\pic^d(C)_m\times C\times T_m$ via $\eta$.

Recall that for every partition $\lambda$ in $\Lambda_{l,m+1}$, $C_{\lambda,m}$ is the locally closed subset
$$\{\cL_m\in \pic^d(C)_{m,L}~|~ \cL_m \text{ is of type } \lambda\}.$$
For every $0\leq i\leq m$, there is a natural map $\Lambda_{l,m+1}\rightarrow \Lambda_{l,i+1}$ mapping $\lambda$ to $\overline{\lambda}$ where $\overline{\lambda}_k=\min\{\lambda_k, i+1\}$ for each $k\leq l$. We have seen that $\rho^m_i: \pic^d(C)_{m,L} \rightarrow \pic^d(C)_{i,L}$ maps $C_{\lambda,m}$ to $C_{\overline{\lambda},i}$.

We now fix a partition $\lambda\in \Lambda_{l,m+1}$. We denote by $\mathcal{B}_{\lambda,m}$ the restriction of $\mathcal{B}_m$ to the subscheme $C_{\lambda,m}\times C\times T_m$, where on $C_{\lambda,m}$ we consider the reduced scheme structure. We denote by $p_1$ the projection to the first factor $\pic^d(C)_{m,L}\times C\times T_m\rightarrow \pic^d(C)_{m,L}$. It is easy to check that for every $\cL_m\in \pic^d(C)_{m,L}$ corresponding to a morphism $f: T_m\rightarrow \pic^d(C)$, the restriction of $\mathcal{B}_{m}$ to the fiber of $p_1^{-1}(\cL_m)\cong C\times T_m$ is $(f\times \id_C)^*(\mathcal{P})\cong\cL_m$.

Recall that for every $i$ with $0\leq i\leq m$, there is a closed embedding $\iota^m_i: T_i\hookrightarrow T_m$. Let
$$\nu^m_i: C_{\lambda,m}\times C\times T_i\hookrightarrow C_{\lambda,m}\times C\times T_m$$
 be the induced embedding. Let $\mathcal{D}_{\lambda,i}$ be the sheaf ${p_1}_*(\nu^m_i)_*(\nu^m_i)^*(\mathcal{B}_{\lambda,m})$ on $C_{\lambda,m}$. Consider the function $C_{\lambda,m}\rightarrow \ZZ$ that takes $\cL_m$ to $h^0(C\times T_i,\cL_i)$, where $\cL_i$ is the image of $\cL_m$ in $\pic^d(C)_i\cong \pic^d(C\times T_i)$. Lemma \ref{dimension lemma} implies that this function is constant on $C_{\lambda,m}$. By the Base Change Theorem, we deduce that $\mathcal{D}_{\lambda,i}$ is a locally free sheaf of rank $\sum\limits_{j=1}^{i+1}n_j(\lambda)$ on $C_{\lambda,m}$, whose fiber over a point $\cL_m$ is $H^0(C,L_i)$.
 For every $i$ and $j$ with $0\leq j\leq i\leq m$, the embedding map $\nu^m_j$ factors through $\nu^m_{i}$. We thus have a natural morphism of sheaves $$(\nu^m_{i})_*(\nu^m_{i})^*(\mathcal{B}_{\lambda,m})\rightarrow(\nu^m_j)_*(\nu^m_j)^*(\mathcal{B}_{\lambda,m})$$
 on $C_{\lambda,m}\times C\times T_{m}$. Applying $(p_1)_*$ to it, we obtain a vector bundle map
 $$\Phi^{i}_j:\mathcal{D}_{\lambda,i}\rightarrow \mathcal{D}_{\lambda,j}$$
 on $C_{\lambda,m}$ whose restriction to the fiber over $\{\cL_m\}$ is the truncation map
 $$\pi^{i}_j: H^0(\cL_{i})\rightarrow H^0(\cL_j).$$

For a fixed partition $\lambda\in \Lambda_{l,m+1}$, we consider $\kappa=(\kappa_1, \cdots, \kappa_m)$ a signature with $k_{j}\leq n_{j+1}(\lambda)$ for every
$j\leq m$. For every $i\leq m$, a point in the fiber product of Grassmann bundles
$$\mathcal{G}_{\lambda,i,\kappa}:=Gr(\kappa_1, \mathcal{D}_{\lambda,1})\times_{C_{\lambda,m}} \cdots \times_{C_{\lambda,m}}  Gr(\kappa_i, \mathcal{D}_{\lambda,i})$$
over $C_{\lambda,m}$ is written as an $(m+1)$--tuple $(\cL_m;\widetilde{V}_1,\cdots, \widetilde{V}_i)$, where $\cL_m\in C_{\lambda,m}$ and $\widetilde{V}_j$ is a dimension $\kappa_j$ subspace of $(\mathcal{D}_{\lambda,j})|_{\cL_m}\cong H^0(\cL_j)$ for every $j\leq i$. For every weak flag $\textbf{V}$ of $H^0(C,L)$ of signature $\kappa$, we denote by $\cP_{m,i,\textbf{V}}^\lambda$ the subset of points $(\cL_m;\widetilde{V}_1,\cdots, \widetilde{V}_i)\in \mathcal{G}_{\lambda,i,\kappa}$, where $\cL_m\in C_{\lambda,m}$ and $\{\widetilde{V}_1,\ldots, \widetilde{V}_i\}$ is a compatible extension of $\textbf{V}_{(i)}$ to the line bundle $\cL_i$. We also write $\cP_{m,i,\kappa}^\lambda$ for $\bigcup\limits_{\textbf{V}}\cP_{m,i,\textbf{V}}^\lambda$, where $\textbf{V}$ varies over all weak flags of $H^0(C,L)$ of signature $\kappa$.

Recall that $\text{Flag}_{\kappa}$ is the variety parameterizing weak flags of $H^0(C,L)$ of signature $\kappa$. We denote by $\widetilde{\mathcal{P}}_{m,i,\kappa}^\lambda$ the subset of points
$$(\cL_m;\widetilde{V}_1,\cdots, \widetilde{V}_i; \textbf{V}')\in \mathcal{G}_{\lambda,i,\kappa}\times \text{Flag}_{\kappa}$$
where $\textbf{V}'\in \text{Flag}_{\kappa}$ and $(\cL_m;\widetilde{V}_1,\cdots, \widetilde{V}_i)\in \mathcal{P}_{m,i,\textbf{V}'}^\lambda$.

\begin{lemma}\label{consofgf}
Let $\lambda\in \Lambda_{l,m+1}$ and $\kappa$ be a signature of length $m$ with $\kappa_j\leq n_{j+1}(\lambda)$ for every $1\leq j\leq m$. Then for every $i$ with $1\leq i\leq m$, $\widetilde{\mathcal{P}}_{m,i,\kappa}^\lambda$ is a constructible subsets of $\mathcal{G}_{\lambda,i,\kappa}\times \text{Flag}_{\kappa}$.
\end{lemma}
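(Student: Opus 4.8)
The plan is to exhibit $\widetilde{\mathcal{P}}_{m,i,\kappa}^\lambda$ as a finite intersection of open and closed subsets of $Z:=\mathcal{G}_{\lambda,i,\kappa}\times\text{Flag}_\kappa$, using Lemma \ref{degeneracy} as the only nontrivial input. First I would pull back to $Z$ all the data already built on $C_{\lambda,m}$ in the preceding paragraphs: the locally free sheaves $\mathcal{D}_{\lambda,j}$ for $0\le j\le i$ (with $\mathcal{D}_{\lambda,0}=H^0(C,L)\otimes\mathcal{O}$ the trivial bundle), together with the truncation maps $\Phi^j_{j-1}\colon\mathcal{D}_{\lambda,j}\to\mathcal{D}_{\lambda,j-1}$ and $\Phi^j_0\colon\mathcal{D}_{\lambda,j}\to\mathcal{D}_{\lambda,0}$. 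On $Z$ there are two further families of tautological subbundles: from the $j$-th factor $Gr(\kappa_j,\mathcal{D}_{\lambda,j})$ of $\mathcal{G}_{\lambda,i,\kappa}$ one gets a rank $\kappa_j$ subbundle $\mathcal{S}_j\subset\mathcal{D}_{\lambda,j}$, and from $\text{Flag}_\kappa$ one gets, for each $j\le i$, a rank $\kappa_j$ subbundle $\mathcal{F}_j\subset H^0(C,L)\otimes\mathcal{O}$ (for indices $j$ lying in a constant run of the signature $\kappa$ these subbundles coincide, which causes no trouble). By construction, over a point $(\cL_m;\widetilde{V}_1,\dots,\widetilde{V}_i;\textbf{V}')$ the fibers of $\mathcal{S}_j$ and $\mathcal{F}_j$ are exactly $\widetilde{V}_j\subset H^0(\cL_j)$ and $V'_j\subset H^0(C,L)$, while $\Phi^j_{j-1}$ and $\Phi^j_0$ restrict fibrewise to the truncation maps $\pi^j_{j-1}$ and $\pi^j_0$.

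Next I would rewrite the condition that $\{\widetilde{V}_j\}_{j\le i}$ be a compatible extension of $\textbf{V}'_{(i)}$ to $\cL_i$ as the conjunction of three families of vector bundle conditions on $Z$:
\begin{enumerate}
\item[(a)] for $2\le j\le i$, the composite $\mathcal{S}_j\hookrightarrow\mathcal{D}_{\lambda,j}\xrightarrow{\Phi^j_{j-1}}\mathcal{D}_{\lambda,j-1}$ has image contained in the subbundle $\mathcal{S}_{j-1}$;
\item[(b)] for $1\le j\le i$, the composite $\mathcal{S}_j\hookrightarrow\mathcal{D}_{\lambda,j}\xrightarrow{\Phi^j_0}\mathcal{D}_{\lambda,0}$ has image contained in the subbundle $\mathcal{F}_j$;
\item[(c)] for $1\le j\le i$, the composite in (b) is fibrewise injective.
\end{enumerate}
Conditions (a) and (b) are exactly the inclusions $\pi^j_{j-1}(\widetilde{V}_j)\subseteq\widetilde{V}_{j-1}$ and $\pi^j_0(\widetilde{V}_j)\subseteq V'_j$; the extra requirement in part (1) of the definition of compatible extension, that $\pi^j_0$ induce an \emph{isomorphism} $\widetilde{V}_j\xrightarrow{\sim}V'_j$, is then automatic on the locus where (b) and (c) hold, since $\dim\widetilde{V}_j=\kappa_j=\dim V'_j$ forces a fibrewise injection into $\mathcal{F}_j$ to be an isomorphism. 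The deeper compatibilities $\pi^j_{j'}(\widetilde{V}_j)\subseteq\widetilde{V}_{j'}$ for $j'<j-1$ follow from (a) by composing consecutive truncations, so they need not be imposed separately. Hence (a)+(b)+(c) cuts out precisely $\widetilde{\mathcal{P}}_{m,i,\kappa}^\lambda$.

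Finally I would invoke Lemma \ref{degeneracy}: part (2) shows that each condition (a) and each condition (b) defines a closed subset of $Z$ (apply it with $E=\mathcal{S}_j$, $F$ the relevant pulled-back bundle, and $H=\mathcal{S}_{j-1}$ resp.\ $\mathcal{F}_j$), and part (1) shows that each condition (c) defines an open subset of $Z$. Therefore $\widetilde{\mathcal{P}}_{m,i,\kappa}^\lambda$ is a finite intersection of closed and open subsets of $Z$, hence locally closed, and in particular constructible; note that no irreducibility of $C_{\lambda,m}$ is needed since Lemma \ref{degeneracy} and the notion of constructibility are stated for arbitrary schemes. I expect the main obstacle not to be any single deep step but rather the careful bookkeeping: setting up the tautological subbundles $\mathcal{F}_j$ on $\text{Flag}_\kappa$ when the signature is only weakly decreasing, and --- the point most easily overlooked --- separating the condition ``$\pi^j_0$ carries $\widetilde{V}_j$ isomorphically onto $V'_j$'' into a closed part (b) and an open part (c), since ``isomorphism onto a moving subbundle'' is not, on its face, a locally closed condition. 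The substantive construction of the sheaves $\mathcal{D}_{\lambda,j}$ and the maps $\Phi^j_{j'}$ has already been carried out, so it may be quoted freely.
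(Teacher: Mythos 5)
Your proposal is correct and follows essentially the same route as the paper: the paper likewise pulls back the tautological subbundles from the Grassmann factors and from $\text{Flag}_\kappa$ (denoted $\mathcal{T}_j$ and $\mathcal{R}_j$ there, versus your $\mathcal{S}_j$ and $\mathcal{F}_j$) and writes $\widetilde{\mathcal{P}}_{m,i,\kappa}^\lambda=\bigcap_{j=1}^{i}\bigl(I_{\Psi^j_0}\cap M^{\Psi^j_{j-1}}_{\mathcal{T}_{j-1}}\cap M^{\Psi^j_0}_{\mathcal{R}_j}\bigr)$, which is exactly your conditions (a)--(c) combined with Lemma \ref{degeneracy}. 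Your explicit remarks that injectivity plus containment plus equal ranks forces the isomorphism in condition (1), and that the compatibilities for $j'<j-1$ follow by composing consecutive truncations, are left implicit in the paper but are the correct justifications.
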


\begin{proof}
For simplicity, we write $X$ for the scheme $\mathcal{G}_{\lambda,i,\kappa}\times \text{Flag}_{\kappa}$. For $j$ with $1\leq j\leq i$, we denote by $p_j$ the projection $X$ onto $Gr(\kappa_j,\mathcal{D}_{\lambda,j})$ and by $p_{i+1}$ the projection of $X$ onto $\text{Flag}_{\kappa}$. For a fixed $j$ with $1\leq j\leq i$, we denote by $q_j: Gr(\kappa_j,\mathcal{D}_{\lambda,j})\rightarrow C_{\lambda,m}$. The composition map
$$X\xrightarrow{p_j} Gr(\kappa_j,\mathcal{D}_{\lambda,j})\xrightarrow{q_j} C_{\lambda,m}$$ does not depend on a particular choice of $j$ for $j\leq i$. We denote it by $\chi$.

For every $j$ with $1\leq j\leq i$, we denote by $T_j$ the tautological subbundle of $q_j^*(\mathcal{D}_{\lambda,j})$ on $Gr(\kappa_j,\mathcal{D}_{\lambda,j})$. Let $\mathcal{T}_j=p_j^*{T_j}$ and $\mathcal{F}_j$ be the vector bundle  $p_j^*{q_j^*(\mathcal{D}_{\lambda,j})}=\chi^*(\mathcal{D}_{\lambda,j})$. Hence $\mathcal{T}_j$ is a subbundle of $\mathcal{F}_j$ for each $j$. Over a point $x=(\cL_m;\widetilde{V}_1,\cdots, \widetilde{V}_i; \textbf{V}')\in X$, we have $\mathcal{T}_{j,x}=\widetilde{V}_{j}$ and $\mathcal{F}_{j,x}$ is $H^0(\cL_j)$ where $\cL_j$ is the image of $\cL_m$ under $\pic^d(C)_{m,L}\rightarrow \pic^d(C)_{j,L}$. For every $k$ and $j$ with $0\leq k\leq j\leq i$, we write $\Psi^j_k$ for the composition $\mathcal{T}_j\hookrightarrow \mathcal{F}_j\rightarrow\mathcal{F}_k$.

Let $R_1\supseteq R_2\supseteq\cdots \supseteq R_m$ be the tautological flag bundles on $\text{Flag}_{\kappa}$, where the fiber of $R_j$ over a point $\textbf{V}'=\{V'_n\}_n$ in $\text{Flag}_{\kappa}$ is $V'_j$. We write $\mathcal{R}_j$ for the pull back of $R_j$ via $p_{i+1}:X\rightarrow \text{Flag}_{\kappa}$.
Over a point $x=(\cL_m;\widetilde{V}_1,\cdots, \widetilde{V}_i; \textbf{V}')\in X$, where $\textbf{V}'=\{V'_j\}\in \text{Flag}_\kappa$ we have
$\mathcal{R}_{j,x}=V'_j$. Note that $\mathcal{D}_0$ is the trivial vector bundle on $C_{\lambda,m}$ with fiber $H^0(C,L)$. Hence $\mathcal{F}_0$ is a trivial bundle on $X$ with fiber $H^0(C,L)$. It implies that $\mathcal{R}_{j}$ is a subbundle of $\mathcal{F}_0$.

With the notation in Lemma \ref{degeneracy}, we have
\begin{eqnarray*}
\widetilde{\mathcal{P}}_{m,i,\kappa}^\lambda=\bigcap\limits_{j=1}^{i} (I_{\Psi^j_0}\cap M^{\Psi^j_{j-1}}_{\mathcal{T}_{j-1}}\cap M^{\Psi^j_0}_{\mathcal{R}_j})
\end{eqnarray*}
This completes the proof.
\end{proof}

\begin{corollary}\label{consofp}
With the notation in Lemma \ref{consofgf}, let $\textbf{V}$ be a weak flag of $H^0(C,L)$ of signature $\kappa$. For every $i$ with $1\leq i\leq m$, $\cP_{m,i,\kappa}^\lambda$ and $\cP_{m,i,\textbf{V}}^\lambda$ are both constructible subsets of $\mathcal{G}_{\lambda,i,\kappa}$.
\end{corollary}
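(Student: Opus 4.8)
The plan is to obtain both statements as an immediate consequence of Lemma \ref{consofgf} together with Chevalley's theorem that the image of a constructible set under a morphism of finite type is constructible. Write $\pi_1\colon\mathcal{G}_{\lambda,i,\kappa}\times\text{Flag}_{\kappa}\to\mathcal{G}_{\lambda,i,\kappa}$ and $\pi_2\colon\mathcal{G}_{\lambda,i,\kappa}\times\text{Flag}_{\kappa}\to\text{Flag}_{\kappa}$ for the two projections; both are morphisms between schemes of finite type over $k$.

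For the first assertion I would note that, directly from the definition of $\widetilde{\mathcal{P}}_{m,i,\kappa}^\lambda$ — a point $(\cL_m;\widetilde{V}_1,\dots,\widetilde{V}_i;\textbf{V}')$ lies in it precisely when $(\cL_m;\widetilde{V}_1,\dots,\widetilde{V}_i)\in\cP_{m,i,\textbf{V}'}^\lambda$ — one has
$$\cP_{m,i,\kappa}^\lambda=\pi_1\bigl(\widetilde{\mathcal{P}}_{m,i,\kappa}^\lambda\bigr),$$
since $\cP_{m,i,\kappa}^\lambda=\bigcup_{\textbf{V}'}\cP_{m,i,\textbf{V}'}^\lambda$ by definition. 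As $\widetilde{\mathcal{P}}_{m,i,\kappa}^\lambda$ is constructible by Lemma \ref{consofgf}, Chevalley's theorem gives that $\cP_{m,i,\kappa}^\lambda$ is constructible in $\mathcal{G}_{\lambda,i,\kappa}$.

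For the second assertion, I would fix the weak flag $\textbf{V}$, let $[\textbf{V}]\in\text{Flag}_{\kappa}$ be the corresponding $k$-point, and consider the closed subscheme $\pi_2^{-1}([\textbf{V}])$, on which $\pi_1$ restricts to an isomorphism with $\mathcal{G}_{\lambda,i,\kappa}$. Under this isomorphism the trace $\widetilde{\mathcal{P}}_{m,i,\kappa}^\lambda\cap\pi_2^{-1}([\textbf{V}])$ corresponds exactly to $\cP_{m,i,\textbf{V}}^\lambda$; being the intersection of a constructible set with a closed set, it is constructible, and hence so is $\cP_{m,i,\textbf{V}}^\lambda$. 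Equivalently, one may simply re-run the proof of Lemma \ref{consofgf} with the constant subbundles of $\mathcal{F}_0$ attached to the fixed flag $\textbf{V}$ in place of the tautological flag bundles $\mathcal{R}_j$, obtaining $\cP_{m,i,\textbf{V}}^\lambda$ as the analogous finite intersection of the sets $I_{\Psi^j_0}$, $M^{\Psi^j_{j-1}}_{\mathcal{T}_{j-1}}$ and $M^{\Psi^j_0}_{(V_j)}$ furnished by Lemma \ref{degeneracy}, where $(V_j)$ denotes the constant subbundle with fiber $V_j$.

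I do not expect any real obstacle: the argument is formal once Lemma \ref{consofgf} is available. The only point that needs a moment's attention is confirming that the fiber $\widetilde{\mathcal{P}}_{m,i,\kappa}^\lambda\cap\pi_2^{-1}([\textbf{V}])$ really does match $\cP_{m,i,\textbf{V}}^\lambda$ under $\pi_1$, which is immediate from comparing the defining incidence conditions on the two sides, and noting that the compatible-extension condition defining $\cP_{m,i,\textbf{V}}^\lambda$ is exactly the specialization of that defining $\widetilde{\mathcal{P}}_{m,i,\kappa}^\lambda$ to the point $\textbf{V}'=[\textbf{V}]$.
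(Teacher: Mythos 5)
Your argument is essentially identical to the paper's: the first claim is obtained by projecting $\widetilde{\mathcal{P}}_{m,i,\kappa}^\lambda$ to $\mathcal{G}_{\lambda,i,\kappa}$ and applying Chevalley's theorem, and the second by first intersecting with $pr_2^{-1}(\textbf{V})$ and then projecting. Both steps match the paper's proof, and your extra remarks (that $\pi_1$ is an isomorphism on the fiber, and the alternative of re-running Lemma \ref{consofgf} with constant subbundles) are correct refinements of the same approach.
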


\begin{proof}
We denote by $pr_1$ and $pr_2$ the projections of $\mathcal{G}_{\lambda,i,\kappa}\times \text{Flag}_{\kappa}$ onto $\mathcal{G}_{\lambda,i,\kappa}$ and $\text{Flag}_{\kappa}$, respectively. We thus deduce that $\cP_{m,i,\kappa}^\lambda$, as the image of $\widetilde{\mathcal{P}}_{m,i,\kappa}^\lambda$ under $pr_1$, is a constructible subset of $\mathcal{G}_{\lambda,i,\kappa}$. It is clear that $\cP_{m,i,\textbf{V}}^\lambda$ is the image of $\widetilde{\mathcal{P}}_{m,i,\kappa}^\lambda\cap pr_2^{-1}(\textbf{V})$ under $pr_1$. Lemma \ref{consofgf} implies that $\widetilde{\mathcal{P}}_{m,i,\kappa}^\lambda\cap pr_2^{-1}(\textbf{V})$ is a constructible subset of $\mathcal{G}_{\lambda,i,\kappa}\times \text{Flag}_{\kappa}$. This completes the proof.
\end{proof}

\begin{corollary}
Let $\kappa$ be a signature of length $m$ with $k_j\leq n_{j+1}(\lambda)$ for every $j\leq m$, and $\textbf{V}\in \text{Flag}_\kappa$. For every $i$ with $1\leq i\leq m$, the subsets $S_{i,\kappa}^{\lambda}$ and $S_{i,\textbf{V}}^{\lambda}$ are constructible subset of $\pic^d(C)_{i,L}$.
\end{corollary}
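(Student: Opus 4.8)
The plan is to exhibit each of $S_{i,\textbf{V}}^{\lambda}$ and $S_{i,\kappa}^{\lambda}$ as the image, under a morphism of finite type, of one of the constructible subsets of the Grassmann bundle $\mathcal{G}_{\lambda,i,\kappa}$ furnished by Corollary~\ref{consofp}, and then to conclude by Chevalley's theorem.

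First I would reduce to the case $m=i$. Both the condition ``$\cL_i\in C_{\overline{\lambda},i}$'' and the condition ``$\textbf{V}_{(i)}$ extends compatibly to $\cL_i$'' depend only on the level-$i$ jet $\cL_i\in\pic^d(C)_{i,L}$, on the partition $\overline{\lambda}\in\Lambda_{l,i+1}$, and on the truncated flag $\textbf{V}_{(i)}$, which has length $i$; hence $S_{i,\textbf{V}}^{\lambda}=S_{i,\textbf{V}_{(i)}}^{\overline{\lambda}}$, and, because $\textbf{V}'_{(i)}$ runs over all weak flags of signature $(\kappa_1,\dots,\kappa_i)$ as $\textbf{V}'$ runs over all weak flags of signature $\kappa$, also $S_{i,\kappa}^{\lambda}=S_{i,(\kappa_1,\dots,\kappa_i)}^{\overline{\lambda}}$. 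The hypotheses of the corollary are preserved under this replacement: $\overline{\lambda}\in\Lambda_{l,i+1}$, and $n_{j+1}(\overline{\lambda})=n_{j+1}(\lambda)\ge\kappa_j$ for $1\le j\le i$, since $\overline{\lambda}_p\ge j+1\iff\lambda_p\ge j+1$ whenever $j+1\le i+1$. So I may assume $m=i$, $\lambda\in\Lambda_{l,i+1}$ and $\kappa$ of length $i$, in which case $\overline{\lambda}=\lambda$ and $\textbf{V}_{(i)}=\textbf{V}$.

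Next I would use the structure morphism $\chi\colon\mathcal{G}_{\lambda,i,\kappa}\to C_{\lambda,i}$ of the fiber product of Grassmann bundles, which sends a tuple $(\cL_i;\widetilde{V}_1,\dots,\widetilde{V}_i)$ to $\cL_i$, and compose it with the inclusion of $C_{\lambda,i}$ (a constructible subset of $\pic^d(C)_{i,L}$, equipped with its reduced scheme structure) into $\pic^d(C)_{i,L}$; call the composite $\psi$. By the very definition of $\cP_{i,i,\textbf{V}}^{\lambda}$, a jet $\cL_i\in C_{\lambda,i}$ lies in $\psi(\cP_{i,i,\textbf{V}}^{\lambda})$ if and only if some tuple over $\cL_i$ belongs to $\cP_{i,i,\textbf{V}}^{\lambda}$, that is, if and only if $\textbf{V}$ extends compatibly to $\cL_i$; thus $\psi(\cP_{i,i,\textbf{V}}^{\lambda})=S_{i,\textbf{V}}^{\lambda}$, and, taking the union over weak flags of signature $\kappa$ and using $\cP_{i,i,\kappa}^{\lambda}=\bigcup_{\textbf{V}}\cP_{i,i,\textbf{V}}^{\lambda}$, also $\psi(\cP_{i,i,\kappa}^{\lambda})=S_{i,\kappa}^{\lambda}$. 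Corollary~\ref{consofp} gives that $\cP_{i,i,\textbf{V}}^{\lambda}$ and $\cP_{i,i,\kappa}^{\lambda}$ are constructible in $\mathcal{G}_{\lambda,i,\kappa}$; since $\chi$ is of finite type (indeed projective) and $\pic^d(C)_{i,L}$ is of finite type over $k$, the map $\psi$ is a morphism of finite type between Noetherian schemes, so Chevalley's theorem shows that $S_{i,\textbf{V}}^{\lambda}$ and $S_{i,\kappa}^{\lambda}$ are constructible subsets of $\pic^d(C)_{i,L}$.

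The one step that needs care is the reduction in the second paragraph: verifying that truncating the flag and the signature to length $i$ neither loses nor gains points of $S_{i,\textbf{V}}^{\lambda}$ or $S_{i,\kappa}^{\lambda}$, and that the admissibility inequalities $\kappa_j\le n_{j+1}(\,\cdot\,)$ persist. If instead one insisted on working at level $m$, one would be forced to prove that $\rho^m_i$ maps $C_{\lambda,m}$ onto all of $C_{\overline{\lambda},i}$ --- a lifting statement for jets of a prescribed type --- which holds but is less transparent than the reduction just indicated.
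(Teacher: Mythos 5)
Your proposal is correct and follows essentially the same route as the paper: reduce to the case $i=m$ by truncating $\lambda$ and $\kappa$, then realize $S_{i,\textbf{V}}^{\lambda}$ and $S_{i,\kappa}^{\lambda}$ as images of the constructible sets $\cP_{i,i,\textbf{V}}^{\lambda}$ and $\cP_{i,i,\kappa}^{\lambda}$ under the projection $\chi$ and apply Chevalley's theorem. Your write-up is in fact somewhat more careful than the paper's, which leaves the verification of the truncation step and the invocation of Chevalley implicit.
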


\begin{proof}
For a fixed $i$, let $\overline{\kappa}$ be a signature of length $i$ such that $\overline{\kappa}_j=\kappa_j$ for every $j\leq i$. Recall that $\overline{\lambda}$ is the image of $\lambda$ under $\Lambda_{l,m+1}\rightarrow \Lambda_{l,i+1}$.  By the definition of $S_{i,\kappa}^{\lambda}$, we have $S_{i,\kappa}^{\lambda}=S_{i,\overline{\kappa}}^{\overline{\lambda}}$ for every $i\leq m$. It suffice to prove the assertion in case $i=m$.

Recall that $\chi$ is the morphism of projection $\mathcal{G}_{\lambda,m,\kappa}\rightarrow C_{\lambda,m}$. The fact that $S_{m,\kappa}^{\lambda}$ is the image of $\cP_{m,m,\kappa}$ under $\chi$ and Corollary \ref{consofp} shows that $S_{m,\kappa}^{\lambda}$ is a constructible subset of $\pic^d(C)_{m,L}$. The assertion for $S_{i,\textbf{V}}^{\lambda}$ is proved similarly.
\end{proof}

\begin{lemma}\label{consofS}
$\widetilde{S}_{i,j,\textbf{V}}^\lambda$ is a constructible subset of the Grassmann bundle $Gr(\kappa_j,\mathcal{D}_i)$ on $C_{\lambda,i}$.
\end{lemma}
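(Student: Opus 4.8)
The plan is to mimic the proof of Lemma \ref{consofgf} and Corollary \ref{consofp}, but now working over the base $C_{\lambda,i}$ instead of $C_{\lambda,m}$, and tracking a single extra subspace of dimension $\kappa_j$. Recall that $\widetilde S_{i,j,\textbf{V}}^\lambda$ lives over the Grassmann bundle $Gr(\kappa_j,\mathcal D_i)$, whose points are pairs $(\cL_i,W)$ with $\cL_i\in C_{\lambda,i}$ (more precisely $C_{\overline\lambda,i}$) and $W\subset H^0(\cL_i)$ of dimension $\kappa_j$. The condition for membership in $\widetilde S_{i,j,\textbf{V}}^\lambda$ is twofold: first, $\cL_i$ must lie in $S_{i,\textbf{V}}^\lambda$, which we have already shown is constructible in $\pic^d(C)_{i,L}$; second, there must exist a compatible extension $\{\widetilde V_l\}_{l\le i}$ of $\textbf{V}_{(i)}$ to $\cL_i$ for which $W$ is exactly the preimage of $V_j$ under the isomorphism $\widetilde V_i\xrightarrow{\ \sim\ } V_i$.

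First I would build the ambient space. Over $Gr(\kappa_j,\mathcal D_{\lambda,i})$ we have the tautological subbundle $\mathcal W\subset \chi^*\mathcal D_{\lambda,i}$ of rank $\kappa_j$, whose fiber over $(\cL_i,W)$ is $W$. Form the fiber product
$$
\mathcal H \;=\; Gr(\kappa_j,\mathcal D_{\lambda,i})\times_{C_{\lambda,i}} \mathcal G_{\lambda,i,\kappa}
$$
so that a point of $\mathcal H$ records $(\cL_i;W;\widetilde V_1,\dots,\widetilde V_i)$. On $\mathcal H$ we have the pulled-back vector bundle maps $\Psi^l_k\colon \mathcal T_l\to \mathcal F_k$ from the proof of Lemma \ref{consofgf} (restricted from level $m$ to level $i$), the tautological flag bundles $\mathcal R_l$ pulled back from a $\text{Flag}_\kappa$ factor — here I would again enlarge by $\text{Flag}_\kappa$ exactly as in Lemma \ref{consofgf}, or fix $\textbf{V}$ and use the constant flag $V_l$ inside the trivial bundle $\mathcal F_0$ with fiber $H^0(C,L)$ — and the pulled-back tautological $\mathcal W$. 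The condition "$\{\widetilde V_l\}$ is a compatible extension of $\textbf{V}_{(i)}$ to $\cL_i$" cuts out a constructible locus in $\mathcal H$ by Lemma \ref{consofgf} (this is precisely $\cP_{i,i,\textbf{V}}^\lambda$ pulled back along the projection forgetting $W$). The additional condition "$W$ equals the preimage of $V_j$ under $\widetilde V_i\cong V_i$" says two things simultaneously: $W\subseteq\widetilde V_i$ (that is, $\mathcal W\subseteq \mathcal T_i$ fiberwise, a closed condition of the type $M^{?}_{?}$ in Lemma \ref{degeneracy}, applied to the inclusion $\mathcal W\hookrightarrow \mathcal F_i$ and the subbundle $\mathcal T_i$), and that the isomorphism $\Psi^i_0|_{\widetilde V_i}\colon \widetilde V_i\to V_i$ sends $W$ onto $V_j=\mathcal R_j$. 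Since $W$ and $\widetilde V_i$ have the same dimension only when $\kappa_j=\kappa_i$, in general the honest statement is: $W\subseteq\widetilde V_i$ and $\Psi^i_0(\mathcal W)=\mathcal R_j$; the inclusion $\Psi^i_0(\mathcal W)\subseteq\mathcal R_j$ is closed by Lemma \ref{degeneracy}(2), and the reverse inclusion holds automatically once $\Psi^i_0|_{\widetilde V_i}$ is injective and $\dim W=\dim V_j$, by counting dimensions. Hence the full locus in $\mathcal H$ is an intersection of the constructible set $\cP_{i,i,\textbf{V}}^\lambda$ (pulled back) with finitely many open conditions ($I_{\Psi^l_0}$, ensuring the relevant maps are injective) and closed conditions ($M^{\Psi^l_{l-1}}_{\mathcal T_{l-1}}$, $M^{\Psi^l_0}_{\mathcal R_l}$, $M^{\iota}_{\mathcal T_i}$ for $\mathcal W\hookrightarrow\mathcal F_i$, and $M^{\Psi^i_0}_{\mathcal R_j}$ for $\mathcal W$), hence constructible.

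Finally, $\widetilde S_{i,j,\textbf{V}}^\lambda$ is by definition the image of this locus under the projection $\mathcal H\to Gr(\kappa_j,\mathcal D_{\lambda,i})$ that forgets the $\widetilde V_l$'s (and the $\text{Flag}_\kappa$ factor, if used): a pair $(\cL_i,W)$ lies in $\widetilde S_{i,j,\textbf{V}}^\lambda$ precisely when \emph{some} compatible extension with the required property exists, i.e. when the fiber of this projection is nonempty. By Chevalley's theorem the image of a constructible set under a morphism of finite type is constructible, so $\widetilde S_{i,j,\textbf{V}}^\lambda$ is a constructible subset of $Gr(\kappa_j,\mathcal D_{\lambda,i})$, as claimed. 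I expect the main technical nuisance — not a deep obstacle — to be bookkeeping the distinction between $\dim W=\kappa_j$ and $\dim\widetilde V_i=\kappa_i=n_{i+1}(\lambda)$ when $j>i$ is not the case (here always $j\ge i$, so $\kappa_j\le\kappa_i$), and making sure that "the preimage of $V_j$ under $\widetilde V_i\cong V_i$" is phrased as the conjunction of the two vector-bundle-map conditions above so that each piece falls under Lemma \ref{degeneracy}; the nonemptiness-of-fiber reformulation plus Chevalley then does the rest.
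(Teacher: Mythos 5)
Your proposal is correct and follows exactly the route the paper intends: the paper omits this proof, stating only that it is ``similar to those of Lemma \ref{consofgf} and Corollary \ref{consofp},'' and your argument is a faithful fleshing-out of that — realize the locus inside a fiber product of Grassmann bundles over $C_{\lambda,i}$, cut it out by the open/closed vector-bundle conditions of Lemma \ref{degeneracy} (including the two extra conditions $\mathcal W\subseteq\mathcal T_i$ and $\Psi^i_0(\mathcal W)\subseteq\mathcal R_j$, with the dimension count handling the reverse inclusion), and conclude by Chevalley. No gaps.
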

The proof of this lemma is similar to those of Lemma \ref{consofgf} and Corollary \ref{consofp}, hence we leave it to the reader.

\providecommand{\bysame}{\leavevmode \hbox \o3em
{\hrulefill}\thinspace}


\begin{thebibliography}{HY}

\bibitem[ACGH]{ACGH}
E.~Arbarello, M.~Cornalba, P.A.~Griffiths and J.Harris, Geometry of Algebraic Curves I, Grundlehren der mathematischen Wissenschaften, A
series of Comprehensive Studies in Mathematics, vol \textbf{267}, Springer-Verlag, New York, 1985.

\bibitem[Doc]{Doc}
R.~Docampo \'{A}lvarez, Arcs on Determinantal Varieties,
Ph.D. Thesis, Univeristy of Illinois at Chicago, 2009, arxiv:
math/1011.1930v2.

\bibitem[Elk]{Elk}
R.~Elkik, Rationalit\'{e} des singularit\' {e}s canoniques, Invent. Math. \textbf{64}, 1981, 1-6.

\bibitem[EL]{EL}
L.~Ein, R.~Lazarsfeld, Singularities of theta divisors and the birational geometry of irregular vatieties, J. Amer. Math. Soc. 10 (1997), 243--258.

\bibitem[EMY]{EMY}
L.~Ein, M.~Musta\c{t}\v{a} and T.~Yasuda, Jet schemes, log
 discrepancies and inversion of adjunction, Invent. Math. \textbf{153}, (2003),519--535.

\bibitem[Gie]{Gie}
D.~Gieseker, Stable curves and special divisors, Invent. Math. \textbf{66} (1982), 251--275.

\bibitem[Har]{Har}
R.~Hartshorne, Algebraic Geometry, Graduate Texts in Mathematics, \textbf{52}, Springer-Verlag, New York, 1977.

\bibitem[Kem]{Kem}
G.~Kempf, On the geometry of a theorem of Riemann, Ann. of
Math. \textbf{98} (1973), 178--185.

\bibitem[Lar]{Lar}
R.~Lazarsfeld, Positivity in Algebraic Geometry II, Positivity for Vector Bundles, and Multiplier Ideals,Ergebnisse der Mathematik und Grenzgebiete, \textbf{3}, Folge, Springer-Verlag, Berlin,  2004.

\bibitem[Mum]{Mum}
D.~Mumford, Lectures on Curves on an Algebraic Surface, Annals of Mathematics Studies, \textbf{59}, Princeton University Press, New
Jersey, 1966.

\bibitem[Mus1]{Mus1}
M.~Musta\c{t}\v{a}, Jet schemes of locally complete intersection
canonical singularities, with an appendix by
David Eisenbud and Edward Frenkel, Invent. Math., \textbf{145} (2001), 397--424.

\bibitem[Mus2]{Mmus2}
M.~Musta\c{t}\v{a}, Singularities of pairs via jet schemes,  J. Amer. Math. Soc. \textbf{15} (2002), 599--615.


\end{thebibliography}
\end{document}